\documentclass{article}
\pdfoutput=1
\usepackage{etoolbox}
\newbool{production}
\setbool{production}{false}
\usepackage{preamble}
\addbibresource{mono_lib.bib}
\title{A notion of homotopy for directed graphs and their flag complexes}
\shorttitle{A notion of homotopy for directed graphs and their flag complexes}
\author[1,*]{Thomas Chaplin}
\author[1,2,3,4]{Heather A. Harrington}
\author[1,5]{Ulrike Tillmann}
\affil[1]{Mathematical Institute, University of Oxford}
\affil[2]{Max Planck Institute of Molecular Cell Biology and Genetics, Dresden Germany}
\affil[3]{Centre for Systems Biology Dresden, Germany}
\affil[4]{Faculty of Mathematics, Technische Universit\"at Dresden, Germany}
\affil[5]{Isaac Newton Institute, University of Cambridge}
\affil[*]{Corresponding author: {\normalfont\texttt{\href{mailto:thomas.chaplin@maths.ox.ac.uk}{thomas.chaplin@maths.ox.ac.uk}}}}

\begin{document}

\maketitle
\begin{abstract}
Directed graphs can be studied by their associated directed flag complex.
The homology of this complex has been successful in applications as a topological invariant for digraphs.
Through comparison with path homology theory, we derive a homotopy-like equivalence relation on digraph maps such that equivalent maps induce identical maps on the homology of the directed flag complex.
Thus, we obtain an equivalence relation on digraphs such that equivalent digraphs have directed flag complexes with isomorphic homology.
With the help of these relations, we can prove a generic stability theorem for the persistent homology of the directed flag complex of filtered digraphs.
In particular, we show that the persistent homology of the directed flag complex of the shortest-path filtration of a weighted directed acyclic graph is stable to edge subdivision.
In contrast, we also discuss some important instabilities that are not present in persistent path homology.
We also derive similar equivalence relations for ordered simplicial complexes at large.
Since such complexes can alternatively be viewed as simplicial sets, we verify that these two perspectives yield identical relations.
\end{abstract}
 \setcounter{tocdepth}{2}
\tableofcontents

\section{Introduction}
Digraphs appear naturally in numerous applications domains, including neuroscience~\cite{reimann2017cliques}, traffic network analysis~\cite{bittner2018comparing} and social network analysis~\cite{oliveira2012overview}.
Frequently these digraphs support a dynamical system such as brain activity, traffic flow and (dis)information spread.
A common hypothesis is that the structure of the digraph is critical in determining the evolution of the dynamical system.
In order to assess this hypothesis, there is a need for interpretable and informative summaries of digraphs, that are amenable to statistical analysis.

A common approach is to associate a combinatorial object to the digraph
built out of substructures in the digraph which are relevant to the application in hand.
For example, one could study the set of all paths~\cite{Grigoryan2012}, all tournaments~\cite{Govc2021} or all directed cliques~\cite{Masulli2016} in the digraph. The interest to us is that each of these combinatorial objects carries sufficient structure to build a chain complex,
the homology of which can be used as an algebraic invariant of the underlying digraph.
These invariants are most useful (such as in the context of persistence) when they are functorial, in the sense that there is a class of digraph maps which induce maps on homology.

In order to assess the discriminative power of these homologies and hence their utility in applications, some natural questions arise:
When do two digraphs give rise to isomorphic homology?
When do two digraph maps induce \emph{the same} maps on homology?

In this work, we make progress towards these two questions in the case of the directed flag complex.
This is a simplicial set in which the $k$-simplies are the directed $(k+1)$-cliques, i.e.
$(k+1)$-tuples of distinct vertices $v_0 \dots v_k$ such that there is an edge $v_i \to v_j$ whenever $i< j$.
We focus on the directed flag complex because it has seen particular success in the field of neuroscience. The homology of the directed flag complex was proposed as a digraph invariant by \citeauthor{Masulli2016}~\cite{Masulli2016}
and, shortly thereafter,
the Betti numbers of the complex
associated to the activation graph of numerically simulated neocortical microcircuitry
was shown to
exhibit significant temporal patterns in response to stimuli~\cite{reimann2017cliques}.
Further applications may be enabled by the incorporation of persistence, 
and the presence of highly performant software for its computation~\cite{Luetgehetmann2020},
which can be used to capture the topological organisation of directed structures, across a range of scales~\cite{Caputi2021}.

In contrast, the directed flag complex has received comparatively little theoretical treatment.
Here, we describe an equivalence relation $\simeq_{\dFl}$ for digraph maps such that if two maps are $\simeq_{\dFl}$-equivalent then they induce identical maps on the homology of the directed flag complex.
In particular, this induces an equivalence relation $\simeq_{\dFl}$ on digraphs themselves, such that if two digraphs are $\simeq_{\dFl}$-equivalent then their directed flag complexes have isomorphic homology.
To illustrate the utility of these relations in the context of persistence, we use the relation $\simeq_{\dFl}$ to show that the persistent homology of the directed flag complex applied to a filtration of digraphs is stable in certain circumstances.

There are numerous homotopy theories for both directed and undirected graphs.
The relation we will develop is primarily derived from path homotopy theory, which was introduced by~\citeauthor{Grigoryan2014} in a series of papers~\cite{Grigoryan2020,Grigoryan2012, Grigoryan2014}.
Initially developed for digraphs, path homotopy generalises a prior homotopy theory for undirected graphs~\cite{Babson2006, Barcelo2001}, viewing undirected graphs as a full subcategory of digraphs.
This theory was further generalised to path complexes in later work~\cite{GRIGORYAN2019106877}.

\subsection{Summary}

Given a digraph $G$, we denote the associated directed flag complex by $\dFl(G)$.
The directed flag complex is a particular example of a more general class of objects called ordered simplicial complexes (Definition~\ref{def:osc}).
In this work, we primarily view an ordered simplicial complex as a special case of a regular path complex (Definition~\ref{def:rpc}).
The latter were first introduced in~\cite{Grigoryan2012} as an invariant of directed graphs.
To any regular path complex one can associate a chain complex and moreover this can be made into a functor
$\Omega: \ascat{WkRPC} \to \ascat{Ch}$,
where $\ascat{WkRPC}$ is a category of regular path complexes and $\ascat{Ch}$ is the category of chain complexes of vector spaces over some background field.
Taking the full subcategory of $\ascat{WkRPC}$, in which objects are restricted to ordered simplicial complexes, yields a category $\ascat{TcOSC}$.
We fully characterise the morphisms in this category as weak simplicial morphisms that never map a directed $3$-clique to a reciprocal pair of edges (Lemma~\ref{lem:osc_weak_path_iff_tc}).
Pre-composing with the functor which takes digraphs to their directed flag complex, we obtain a sequence of functors
\begin{equation}\label{eq:intro_factorisation}
\ascat{TcDgr} \xrightarrow{\dFl} \ascat{TcOSC} \xhookrightarrow{\iota} \ascat{WkRPC} \xrightarrow{\Omega} \ascat{Ch}
\end{equation}
where the morphisms in $\ascat{TcDgr}$ are defined so that $\dFl$ becomes a full and faithful functor. We show that this is a factorisation of the chain complex of $\dFl(G)$ (Lemma~\ref{lem:gens_of_osc}).
This factorisation is the main perspective through which we construct our equivalence relation, $\simeq_{\dFl}$, by comparison with a similar relation for the well-studided category $\ascat{WkRPC}$.

In particular, there is a notion of homotopy equivalence, $\simeq$, between the morphisms of $\ascat{WkRPC}$ such that if $f \simeq g$ then $\Omega(f)$ and $\Omega(g)$ are chain homotopic (see~\cite{GRIGORYAN2019106877}).
The equivalence relation for digraph maps that we will construct is a pull-back of this equivalence relation, i.e.\
\begin{equation}
f \simeq_{\dFl} g \iff (\iota \circ \dFl)(f) \simeq (\iota \circ \dFl)(g).
\end{equation}
Our main contribution is an `intrinsic' characterisation of this pull-back (Section~\ref{sec:dflag_homotopy}).

To be more precise,~\cite{GRIGORYAN2019106877} describes a system of one-step homotopies between morphisms in $\ascat{WkRPC}$.
This is essentially a binary relation on the morphisms of $\ascat{WkRPC}$, which is then completed to the equivalence relation $\simeq$.
As above, we can pull back this binary relation to one for the morphisms of $\ascat{TcDgr}$.
Given two morphisms $f, g\in\ascat{TcDgr}$,
we can provide simple, edge-based conditions that are equivalent to the pair $(f, g)$ belonging to this pull-back binary relation (Corollary~\ref{cor:dfl_homot_characterise}).
In particular, we relate $(f, g)$ if
\begin{enumerate}
\item $x\tooreq y \implies f(x) \tooreq g(y)$; and
\item $x\to y$ and $f(x) = g(y)$ $\implies$ $f(x) = f(y)= g(x) = g(y)$,
\end{enumerate}
where $\tooreq$ indicates that either the vertices coincide or are joined by a directed edge.
The relation $\simeq_{\dFl}$ can then be constructed by completing this to an equivalence relation.

As a first step, in Section~\ref{sec:big_osc_homotopy_sec}, we study the pull-back binary relation for $\ascat{TcOSC}$ along the inclusion functor $\iota: \ascat{TcOSC} \hookrightarrow \ascat{WkRPC}$.
Again, we aim to describe the resulting relation `intrinsically' with the category $\ascat{TcOSC}$.
Corollary~\ref{cor:pullback_TcOSC_WkRPC} achieves this by
describing related morphisms in terms of a cylinder functor for this category.
For an alternative view, $\ascat{TcOSC}$ can also be fully and faithfully embedded in the category of simplicial sets, which itself has a notion of homotopy equivalence.
In Section~\ref{sec:sset}, we verify that the pull-back relation along this embedding coincides with the pull-back along $\iota$.

Finally, in Section~\ref{sec:stability}, we employ the equivalence relation $\simeq_{\dFl}$ to obtain stability results for the persistent homology of the directed flag complex of a filtration of digraphs.
Typically, such stability results take the form of a bound on the interleaving distance.
The relation $\simeq_{\dFl}$ allow us to construct interleavings of digraph filtrations up to $\simeq_{\dFl}$, facilitating the derivation of such bounds.
We summarise this approach with Corollary~\ref{cor:generic_stability}.
In Proposition~\ref{prop:subdiv_dag_stability}, we apply this result to the shortest-path filtration of a weighted directed acyclic graph (DAG), showing that its persistent homology is stable to edge subdivision.
However, persistent homology pipelines using the directed flag complex have some notable instabilities.
We discuss some important examples in Section~\ref{sec:instabilities}, namely edge subdivision of a non-DAG and the addition of an appendage edge.

\subsection{Notation}

Given categories $\C, \D$, the category of functors from $\C$ to $\D$ is denoted \mdf{$\Funct{\C}{\D}$}.
We use \mdf{$\MorXY{\C}{C_1}{C_2}$} to denote the set of morphisms between two objects $C_1, C_2 \in C$ and use \mdf{$\Obj(C)$} to denote the objects of $\C$.
Given functors $F, G: \C \to \D$, we denote a natural transformation by $\mu: F \Rightarrow G$.
We denote components of $\mu$ by $\mu_C$ for each object $C\in\C$, or optionally omit the notation when the component is clear from context.

\subsection{Acknowledgements}
HAH gratefully acknowledges funding from a Royal Society University Research Fellowship.
The authors are members of the Centre for Topological Data Analysis, which is funded by the EPSRC grant `New Approaches to Data Science: Application Driven Topological Data Analysis' \href{https://gow.epsrc.ukri.org/NGBOViewGrant.aspx?GrantRef=EP/R018472/1}{\texttt{EP/R018472/1}}.
For the purpose of Open Access, the authors have applied a CC BY public copyright licence to any  Author Accepted Manuscript (AAM) version arising from this submission. 
 \section{Combinatorial and algebraic complexes}\label{sec:complexes}

\subsection{Path complexes}\label{sec:path_comp}

\begin{defin}
Let $V$ be an arbitrary set, which we call the \mdf{vertices}.
\begin{enumerate}
    \item An \mdf{elementary $k$-path (on V)} is any sequence of $(k+1)$ vertices, written $p = v_0 \dots v_k$, where $v_i \in V$.
    \item Let \mdf{$v_0 \dots \hat{v_i} \dots v_k$} denote the $(k-1)$-path obtained by removing the vertex $v_i$.
    \item Given a function $f:V \to W$ and an elementary path $p= v_0 \dots v_k$, let \mdf{$f(p)$} denote the elementary $k$-path $f(v_0)\dots f(v_k)$.
    \item We say $p$ is \mdf{irregular} if $v_i = v_{i+1}$ for some $i$, otherwise we say $p$ is \mdf{regular}.
    \item We say $p$ is \mdf{simplicial} if each of the $v_i$ are distinct.
    \item If an elementary path $p'$ can be obtained from another elementary path $p$ by removing a subset of vertices, we say $p'$ is a \mdf{face} of $p$.
    \item If an elementary path $p'$ can be obtained from another elementary path $p$ by successively removing the initial or terminal vertex, we say $p'$ is a \mdf{sub-path} of $p$.
\end{enumerate}
\end{defin}

\begin{example}
On the vertex set $V=\{a, b\}$, $p_1 \defeq aab$ and $p_2 \defeq aba$ are both elementary $2$-paths.
The path $p_1$ is irregular and thus non-simplicial, whilst $p_2$ is regular but still non-simplicial.
The path $p_3\defeq aa$ is a sub-path and face of $p_1$ whilst $p_3$ is a face of $p_2$ but not a sub-path.
\end{example}

Next, we describe the notions of a path complex, first introduced in~\cite{Grigoryan2012}, and an ordered simplicial complex.

\begin{defin}\label{def:rpc}
A \mdf{path complex} on $V$ is a set $P$ of elementary paths on $V$, such that
\begin{enumerate}
    \item $P$ contains all singletons, i.e. $v_0 \in P$ for every $v_0 \in V$;
    \item $P$ is closed under truncating paths at either end, i.e.\ if
    $v_0 \dots v_k \in P$ then $v_1 \dots v_k\in P$ and $v_0 \dots v_{k-1}\in P$.
\end{enumerate}
We denote the base vertex set, $\mdf{V(P)} \defeq V$.
We call a path complex \mdf{regular} if all of its constituent paths are regular.
\end{defin}

\begin{defin}\label{def:osc}
An \mdf{ordered simplicial complex} on $V$ is a set $K$ of \textit{simplicial} paths on $V$, such that
\begin{enumerate}
    \item $P$ contains all singletons, i.e. $v_0\in K$ for every $v_0 \in V$;
    \item $P$ is closed under taking ordered subsets, i.e.\ if
    $v_0 \dots v_k \in K$ then for every $i$, $v_0 \dots \hat{v_i} \dots v_k \in K$. 
\end{enumerate}
We denote the base vertex set, $\mdf{V(K)} \defeq V$.
\end{defin}

\begin{rem}
We emphasise that the ordering on the vertices of a path in a simplicial path is \emph{not} inherited from some total order on $V$.
Instead, each simplex carries a total order of its vertices and it is possible for a given subset of vertices to support multiple simplices (each with a different order).
\end{rem}

For brevity, we refer to paths in an ordered simplicial complex as \mdf{simplices} and take `simplicial complex' to mean an ordered simplicial complex.
Clearly any simplicial complex is a regular path complex.
Since we are primarily interested in simplicial complexes, we primarily focus on regular path complexes in this work.
Next, we describe a range of categorical structures for these combinatorial complexes.

\begin{defin}
Given two path complexes, $P_1$ and $P_2$, a map $f: V(P_1) \to V(P_2)$ is a
\begin{enumerate}
    \item \mdf{weak path morphism} if for any $p\in P_1$, $f(p)$ is either irregular or $f(p) \in P_2$;
    \item \mdf{strong path morphism} if for any $p\in P_1$, $f(p) \in P_2$.
\end{enumerate}
We denote the category of all  path complexes with weak path morphisms as \mdf{$\ascat{WkPathC}$}, and with strong path morphisms as \mdf{$\ascat{StPathC}$}.
We denote the full subcategories of all regular path complexes by $\mdf{\ascat{WkRPC}}$ and $\mdf{\ascat{StRPC}}$ respectively.
\end{defin}

\begin{defin}
Given two ordered simplicial complexes, $K_1$ and $K_2$, a map $f: V(K_1) \to V(K_2)$ is a
\begin{enumerate}
    \item \mdf{weak simplicial morphism} if for any $p\in K_1$, $f(p)$ is either non-simplicial or $f(p) \in K_2$;
    \item \mdf{triangle-collapsing simplicial morphism} if $f$ is weak simplicial and furthermore whenever there is a simplex $v_0 v_1 v_2 \in K_1$ such that $f(v_0) = f(v_2)$ then $f(v_0) = f(v_1) = f(v_2)$;
    \item \mdf{strong simplicial morphism} if for any $p \in K_1$, $f(p) \in K_2$.
\end{enumerate}
We denote the category of all simplicial complexes with weak simplicial morphisms as \mdf{$\ascat{WkOSC}$}, with triangle-collapsing simplicial morphisms as \mdf{$\ascat{TcOSC}$}, and with strong simplicial morphisms as \mdf{$\ascat{StOSC}$}.
\end{defin}

\begin{rem}
Given a triangle-collapsing simplicial morphism $f: K_1 \to K_2$ if there is a simplex $p = v_0 \dots v_k$ such that $f(v_i) = f(v_j)$, then \emph{all} the vertices $v_m$, for $i\leq m \leq j$, have the same image under $f$.
\end{rem}

The relationship between strong simplicial and strong path morphism is obvious.

\begin{lemma}
Given two simplicial complexes, $K_1, K_2$, a vertex map $f: V(K_1) \to V(K_2)$ is a strong simplicial morphism if and only if it is a strong path morphism.
Therefore, $\ascat{StOSC}$ is a full subcategory of $\ascat{StRPC}$.
\end{lemma}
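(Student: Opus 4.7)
The plan is to observe that once $K_1$ is viewed as an object of $\ascat{StRPC}$, its underlying set of elementary paths is literally the set of simplices of $K_1$, so the two morphism conditions reduce to the same quantified statement.

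More concretely, I would first unfold the definitions. A strong simplicial morphism $f : V(K_1) \to V(K_2)$ demands $f(p) \in K_2$ for every simplex $p \in K_1$, while a strong path morphism demands $f(p) \in K_2$ for every element $p$ of the path complex $K_1$. Since a simplicial complex is a regular path complex whose paths are exactly its simplices, the two universal quantifiers range over the same set, so the two conditions are identical. One small point worth flagging is that in the simplicial case the conclusion $f(p) \in K_2$ automatically forces $f(p)$ to be simplicial (as $K_2$ only contains simplicial paths), which matches the implicit requirement on strong path morphisms into a regular path complex.

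For the second assertion, I would check the two things needed for $\ascat{StOSC}$ to be a full subcategory of $\ascat{StRPC}$: (i) every object of $\ascat{StOSC}$ is an object of $\ascat{StRPC}$, which holds because any simplicial complex is a regular path complex (as noted just before the statement); and (ii) for any two objects $K_1, K_2 \in \ascat{StOSC}$, the equality $\MorXY{\ascat{StOSC}}{K_1}{K_2} = \MorXY{\ascat{StRPC}}{K_1}{K_2}$ holds, which is exactly the content of the first part. Compatibility with composition and identities is inherited from the common underlying notion of vertex map.

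There is no real obstacle here — the argument is a direct unpacking of the definitions. The only place to be mildly careful is to keep track that the hom-sets agree as \emph{sets of vertex maps satisfying a property}, so that fullness is immediate rather than requiring any extra verification beyond the biconditional on vertex maps.
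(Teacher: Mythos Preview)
Your proposal is correct and matches the paper's treatment: the paper does not give a proof at all, calling the relationship ``obvious,'' and your definitional unpacking is exactly that obvious argument.
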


A weak simplicial morphism can fail to be a weak path morphism if it maps a simplex to a non-simplicial but regular path.
For example, consider the following simplicial complexes
\begin{equation}
    K_1 = \{ 0, 1, 2, 01, 02, 12, 012 \},\quad
    K_2 = \{ 0, 1, 01, 10 \}.
\end{equation}
Consider the vertex map $f: \{ 0, 1,2 \} \to \{ 0, 1 \}$ which is the identity on $\{0, 1\}$ and maps $2 \mapsto 0$.
It is easy to check that $f$ is a weak simplicial morphism.
However, it is not a weak path morphism because $f(012)=010$ is regular but does not belong to $K_1$.
Thankfully, precluding this scenario is necessary and sufficient to yield a weak path morphism.

\begin{lemma}\label{lem:osc_weak_path_iff_tc}
A weak simplicial morphism $f: K_1 \to K_2$ is a weak path morphism if and only if it is a triangle-collapsing simplicial morphism.
Therefore, $\ascat{TcOSC}$ is the full subcategory of $\ascat{WkRPC}$, with objects restricted to all simplicial complexes.
\end{lemma}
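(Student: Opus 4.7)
The plan is to prove the two directions separately, then deduce the full-subcategory statement as an immediate consequence.

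For the forward direction (weak path morphism implies triangle-collapsing), first observe that any irregular path is automatically non-simplicial, so the weak path condition is a priori stronger than the weak simplicial condition, and hence every weak path morphism is already weak simplicial. It then remains to verify the triangle-collapsing property. Suppose $v_0 v_1 v_2 \in K_1$ with $f(v_0) = f(v_2)$. The image $f(v_0 v_1 v_2)$ cannot lie in $K_2$, because simplices in $K_2$ are required to be simplicial paths and this image has a repeated vertex. By the weak path hypothesis, the image must then be irregular, so either $f(v_0) = f(v_1)$ or $f(v_1) = f(v_2)$; either way, combined with $f(v_0) = f(v_2)$, all three images coincide.

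For the reverse direction (triangle-collapsing implies weak path), take an arbitrary $p = v_0 \dots v_k \in K_1$. If $f(p)$ is simplicial, then the weak simplicial condition immediately forces $f(p) \in K_2$. Otherwise, choose indices $i < j$ with $f(v_i) = f(v_j)$; the goal is to show $f(p)$ is irregular, i.e.\ that some consecutive pair of vertices in $p$ has the same image. To do so, extract the ordered subset $v_i v_{i+1} v_j$ of $p$, which lies in $K_1$ by closure under taking ordered subsets. Applying the triangle-collapsing hypothesis to this triangle yields $f(v_i) = f(v_{i+1}) = f(v_j)$, so the consecutive pair $v_i, v_{i+1}$ has equal images and $f(p)$ is irregular, as required. (If $j = i+1$ the argument is trivial.)

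Finally, because every ordered simplicial complex is a regular path complex and a morphism in $\ascat{WkRPC}$ between two such objects is, by the equivalence just proved, exactly a triangle-collapsing simplicial morphism, the second sentence of the lemma follows formally: $\ascat{TcOSC}$ embeds as a full subcategory of $\ascat{WkRPC}$ with objects restricted to simplicial complexes.

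I do not anticipate a serious obstacle; the only subtlety is keeping straight the distinction between \emph{irregular} (adjacent equal vertices) and \emph{non-simplicial} (any repeated vertex), which is precisely why the triangle axiom, combined with closure under ordered subsets, is strong enough to promote an arbitrary repetition in $f(p)$ to an adjacent one.
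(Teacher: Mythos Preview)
Your proof is correct and follows essentially the same approach as the paper. The only minor difference is that in the ``triangle-collapsing $\Rightarrow$ weak path'' direction, the paper invokes an earlier remark (that $f(v_i)=f(v_j)$ forces all intermediate $f(v_m)$ to agree), whereas you extract the single triangle $v_i v_{i+1} v_j$ and apply the axiom once to obtain the adjacent repetition directly; your route is slightly more self-contained but equivalent in spirit.
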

\begin{proof}
Suppose $f$ is triangle-collapsing and take
an arbitrary path $p = v_0 \dots v_k \in K_1$.
Since $f$ is weak simplicial there are two cases.
In the first case $f(p) \in K_2$ and we are done.
In the second case $f(p)$ is non-simplicial, i.e.\ there is some $i < j$ such that $f(v_i) = f(v_j)$.
Since $f$ is triangle collapsing, we must have $f(v_m) = f(v_i)$ for all $i \leq m \leq j$.
In particular, $f(v_{i+1}) = f(v_i)$ and hence $f(p)$ is irregular.
Therefore, $f$ is a weak path morphism.

Now suppose $f$ is not triangle-collapsing.
Hence, there is some simplex $v_0 v_1 v_2 \in K_1$ with $f(v_0) = f(v_2)$ but $f(v_1) \neq f(v_0)$ and $f(v_1) \neq f(v_2)$.
Therefore, $f(p)$ is non-simplicial, and hence not a simplex of $K_2$, but is also regular.
Thus, $f$ is not a weak path morphism.
\end{proof}

\begin{defin}
Given $k\geq 0$ and a path complex $P$, the \mdf{$k$-skeleton of $P$},
is the sub--path complex
\begin{equation}
\mdf{\Sk_k(P)}\defeq \left\{ p \in P \rmv p \text{ is an elementary }l\text{-path for }l\leq k\right\}.
\end{equation}
\end{defin}
\begin{rem}
Any weak path morphism $f: P_1 \to P_2$ induces a weak path morphism $f: \Sk_k(P_1) \to \Sk_k(P_2)$.
Hence, $\Sk_k$ is an endofunctor on $\ascat{WkPathC}$.
This endofunctor restricts to an endofunctor on all the categories introduced in this section.
\end{rem}

\begin{defin}
Given a path complex $P$, removing all irregular paths yields its maximal regular sub--path complex, which we call its \mdf{regularisation}.
This operation constitutes a functor $\mdf{\toreg}:\ascat{WkPathC} \to \ascat{WkRPC}$ and $\mdf{\toreg}:\ascat{StPathC} \to \ascat{StRPC}$.
\end{defin}

\subsection{Regular path complexes from digraphs}\label{sec:background_from_Dgr}

A \mdf{directed graph}, or \mdf{digraph}, is a pair $G=(V, E)$ where $V$ is an arbitrary set and $E \subseteq (V \times V)$.
We call $V$ the set of \mdf{vertices} and $E$ the set of \mdf{(directed) edges}.
We denote the sets of vertices and edges by $\mdf{V(G)}\defeq V$ and $\mdf{E(G) \defeq E}$ respectively.
An edge belonging to $\mdf{\Delta_V} \defeq\left\{ (v, v) \rmv v \in V \right\}$ is called a \mdf{self-loop}.
We call a digraph \mdf{simple} if it contains no self-loops, i.e. $E \subseteq (V\times V)\setminus \Delta_V$.
\textbf{In this work, we assume that all digraphs are simple}.

We write $\mdf{v_0 \to v_1}$ to mean $(v_0, v_1)\in E$ and similarly \mdf{$v_0 \not\to v_1$} to mean $(v_0, v_1)\not\in E$; we write $\mdf{v_0 \tooreq v_1}$ to mean either $v_0 = v_1$ or $v_0 \to v_1$.
Given an edge, $e=(s, t)\in E$, we denote its endpoints by $\mdf{\st(e)} \defeq s$ and $\mdf{\fn(e)} \defeq t$.
We say there is a \mdf{reciprocal edge} on $\{ v_0, v_1\}\subseteq V$ if $v_0 \to v_1$ and $v_1 \to v_0$, and we write \mdf{$v_0 \recip v_1$}.
Finally, if $G$ contains no reciprocal edges then we say $G$ is \mdf{oriented}.

\begin{defin}
Given a directed graph $G$,
a \mdf{directed $k$-clique} is a $k$-tuple of distinct vertices (written $v_0 \dots v_{k-1}$) so that there is an edge $v_i \to v_j$ whenever $i < j$.
The \mdf{directed flag complex of $G$}, \mdf{$\dFl(G)$}, is the simplicial complex on $V$ consisting of all directed cliques in $G$.
\end{defin}

This construction can also be made functorial, but first we must define some categories of digraphs.

\begin{defin}
Given directed graphs $G_1=(V_1, E_1)$ and  $G_2=(V_2, E_2)$, a map $f: V_1 \to V_2$ is a
\begin{enumerate}
    \item \mdf{weak digraph map} if whenever $v \to w$ in $G_1$, either $f(v) = f(w)$ or $f(v) \to f(w)$ in $G_2$;
    \item \mdf{triangle-collapsing digraph map} if $f$ is a weak digraph map and furthermore whenever there is a directed $3$-clique, $v_0 v_1 v_2$, such that $f(v_0) = f(v_2)$, then $f(v_0) = f(v_1) = f(v_2)$;
    \item \mdf{strong digraph map} if whenever $v \to w$ in $G_1$ then $f(v) \to f(w)$ in $G_2$.
\end{enumerate}
We denote the category of all simple digraphs with weak digraph maps as \mdf{$\ascat{WkDgr}$}, with triangle-collapsing digraph maps as \mdf{$\ascat{TcDgr}$} and with strong digraph maps as \mdf{$\ascat{StDgr}$}.
\end{defin}

\begin{lemma}\label{lem:dfl_functoriality}
Given two digraphs $G, H$ and a vertex map $f: V(G) \to V(H)$,
\begin{enumerate}
\item $f$ is a weak digraph map $G \to H$ if and only if it is a weak simplicial morphism $K_G \to \dFl(H)$ 
for some simplicial complex $\Sk_1(\dFl(G)) \subseteq K_G \subseteq \dFl(G)$;
\item $f$ is a strong digraph map $G \to H$ if and only if it is a strong simplicial morphism $K_G \to \dFl(H)$ 
for some simplicial complex $\Sk_1(\dFl(G)) \subseteq K_G \subseteq \dFl(G)$;
\item $f$ is a triangle-collapsing digraph map $G \to H$ if and only if it is a triangle-collapsing simplicial morphism $\dFl(G) \to \dFl(H)$.
\end{enumerate}
In particular, $\dFl$ is a full and faithful functor $\ascat{WkDgr} \to \ascat{WkOSC}$, $\ascat{TcDgr} \to \ascat{TcOSC}$ and $\ascat{StDgr} \to \ascat{StOSC}$.
\end{lemma}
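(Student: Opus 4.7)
The plan is to unwind definitions, exploiting the tautology that a simplex of $\dFl(G)$ is exactly a directed clique of $G$.

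For parts (1) and (2), I would take $K_G = \dFl(G)$ as the witness for the forward direction. Given a simplex $p = v_0 \dots v_k \in \dFl(G)$, the clique condition $v_i \to v_j$ for all $i<j$, together with the weak (resp.\ strong) digraph hypothesis on $f$, produces a directed clique $f(v_0) \dots f(v_k)$ in $H$ whenever the images are distinct. If instead some $f(v_i) = f(v_j)$, then $f(p)$ is non-simplicial; this outcome is permitted in the weak case and must be excluded in the strong case, where each edge $v_i \to v_{i+1}$ is sent to an edge of the simple graph $H$, forcing $f(v_i) \neq f(v_{i+1})$ throughout. For the backward direction, any $K_G$ with $\Sk_1(\dFl(G)) \subseteq K_G$ contains every edge $vw$ of $G$ as a $1$-simplex, and applying the weak (resp.\ strong) simplicial condition to each such $vw$ directly recovers the corresponding digraph condition on the edge $v \to w$.

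For part (3), I apply (1) with $K_G = \dFl(G)$ to identify weak digraph maps with weak simplicial morphisms $\dFl(G) \to \dFl(H)$. The triangle-collapsing digraph condition quantifies over directed $3$-cliques of $G$, while the triangle-collapsing simplicial condition quantifies over $2$-simplices of $\dFl(G)$; these two classes of objects coincide, and the condition they impose on $f$ is literally the same formula, so the two notions transport across one another. The ``in particular'' statement then follows at once: faithfulness holds because $\dFl$ leaves the underlying vertex map unchanged, and fullness is exactly the backward direction of (1)--(3) applied with $K_G = \dFl(G)$. The only care required is checking distinctness of images in the strong case of (1)--(2), but this is forced by the simplicity of $H$; I expect no further obstacles.
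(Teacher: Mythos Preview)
Your proposal is correct and follows essentially the same argument as the paper's proof: forward direction via $K_G = \dFl(G)$ using that images of cliques are cliques or non-simplicial, backward direction via the $1$-skeleton applied edge-by-edge, and part (3) by literal identification of directed $3$-cliques with $2$-simplices. One small slip: in the strong case you only argue $f(v_i) \neq f(v_{i+1})$ for consecutive indices, which gives regularity but not simpliciality of $f(p)$; since in a directed clique $v_i \to v_j$ holds for \emph{all} $i<j$, the same simplicity-of-$H$ argument immediately gives $f(v_i) \neq f(v_j)$ for all $i<j$, closing the gap.
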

\begin{proof}
Suppose $f$ induces a weak digraph map $G \to H$.
Take any simplex $p = v_0 \dots v_k\in \dFl(G)$.
Then for every $i \leq j$, there is an edge $v_i \to v_j$ in $G$.
Since $f$ is a weak digraph map $f(v_i) \tooreq f(v_j)$ for every $i\leq j$.
Either these are all edges, in which case $f(p) \in \dFl(G)$, or there is an equality in which case $f(p)$ is non-simplicial.
Hence, $f$ induces a weak simplicial morphism $\dFl(G) \to \dFl(H)$ and so certainly induces a weak simplicial morphism $K_G \to \dFl(H)$ for any $\Sk_1(\dFl(G)) \subseteq K_G \subset \dFl(H)$.

Now suppose $f$ induces a weak simplicial morphism $\Sk_1(\dFl(G)) \to \dFl(H)$.
Take an edge $v \to w$ in G.
Note there is a simplex $vw \in \Sk_1(\dFl(G))$.
Therefore, either $f(vw)=f(v)f(w)\in \dFl(H)$, in which case $f(v) \to f(w)$ in $H$, or $f(vw)$ is non-simplicial, in which case $f(v) = f(w)$.
Hence, $f$ induces a weak digraph map $G \to H$.

A similar argument shows the second point and the third point follows immediately from definitions.
\end{proof}

Another way to construct a path complex from a digraph is by considering the collection of all its directed paths.
This is the object studied by path homology~\cite{Grigoryan2012}.

\begin{defin}
Given a directed graph $G=(V, E)$, the \mdf{allowed path complex}, $\mdf{\mathcal{A}(G)}$, is the path complex on $V$
such that given an arbitrary elementary path $p= v_0 \dots v_k$,
\begin{equation}
v_0 \dots v_k \in \mathcal{A}(G) \iff \text{for all }i, \; v_i \to v_{i+1}\text{ in }G.
\end{equation}
We refer to the elements of $\mathcal{A}(G)$ as the \mdf{allowed paths} (in $G$).
\end{defin}

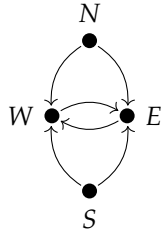
\begin{figure}[htbp]
  \centering
  \begin{tikzpicture}[
  roundnode/.style={circle, fill=black, minimum size=4pt},
	squarenode/.style={fill=black, minimum size=4pt},
	inner sep=2pt,
	outer sep=1pt
  ]

  \node (c) at (3, 1.5) [roundnode, label=left:$W$] {};
  \node (d) at (4, 1.5) [roundnode, label=right:$E$] {};
  \node (e) at (3.5, 2.5) [roundnode, label=above:$N$] {};
  \node (f) at (3.5, 0.5) [roundnode, label=below:$S$] {};

  \draw[->, bend left] (c) to (d);
  \draw[->, bend left] (d) to (c);
  \draw[->, bend right] (e) to (c);
  \draw[->, bend left] (e) to (d);
  \draw[->, bend left] (f) to (c);
  \draw[->, bend right] (f) to (d);

\end{tikzpicture}

   \caption{A simple example digraph $G$ for which $\dFl(G)\neq \mathcal{A}(G)$}\label{fig:dfl_neq_a_example}
\end{figure}

\begin{example}\label{ex:dfl_neq_a}
Figure~\ref{fig:dfl_neq_a_example} shows an example digraph, $G$, for which $\mathcal{A}(G) \neq \dFl(G)$.
First note that $EWE \in \mathcal{A}(G)$, but this path is not simplicial and hence does not belong to $\dFl(G)$.
Also note that $\dFl(G)$ contains two simplices on the nodes $\{N, E, W\}$, namely $NEW$ and $N WE$.
We will see later, in Example~\ref{ex:dfl_neq_a_revisit}, that $\dFl(G)$ has the homology of the $2$-sphere whilst $\mathcal{A}(G)$ is contractible.
\end{example}

Note that $\mathcal{A}(G)$ is always a regular path complex but is not, in general, a simplicial complex.
For example, consider any non-oriented digraph or, more generally, a digraph containing some path $a \to b \to c$ for which $a \not\to c$.
The functoriality of this construction is simple: by a proof similar to that of Lemma~\ref{lem:dfl_functoriality} we obtain the following.

\begin{lemma}\label{lem:dig_map_iff_path_morph}
Given two digraphs $G, H$, and a vertex map $f: V(G) \to V(H)$,
\begin{enumerate}
     \item $f$ is a weak digraph map $G \to H$ if and only if it is a weak path morphism $P_G \to \mathcal{A}(H)$ for some path complex $\Sk_1(\mathcal{A}(G))\subseteq P_G \subseteq \mathcal{A}(G)$;
     \item $f$ is a strong digraph map $G \to H$ if and only if it is a strong path morphism $P_G \to \mathcal{A}(H)$ for some path complex $\Sk_1(\mathcal{A}(G))\subseteq P_G \subseteq \mathcal{A}(G)$.
\end{enumerate}
In particular, $\mathcal{A}$ is a full and faithful functor $\ascat{WkDgr}\to\ascat{WkRPC}$ and $\ascat{StDgr}\to\ascat{StRPC}$.
\end{lemma}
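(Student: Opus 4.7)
The plan is to closely mirror the proof of Lemma~\ref{lem:dfl_functoriality}, since the combinatorial content is essentially the same: the $k$-paths in $\mathcal{A}(G)$ are built out of consecutive edges, just as the $k$-simplices in $\dFl(G)$ are built out of pairwise edges. The two directions of each biconditional then become: (i) edges lift to paths, and (ii) paths restrict to edges in the $1$-skeleton.

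For the forward direction of the weak case, I would start with a weak digraph map $f$ and an arbitrary allowed path $p = v_0 \dots v_k \in \mathcal{A}(G)$. By definition, $v_i \to v_{i+1}$ in $G$ for each $i$, so the weak digraph condition gives $f(v_i) \tooreq f(v_{i+1})$. Splitting on cases, either every such pair is a genuine edge of $H$, in which case $f(p) \in \mathcal{A}(H)$, or some consecutive pair coincides, in which case $f(p)$ is irregular. Thus $f$ is a weak path morphism $\mathcal{A}(G) \to \mathcal{A}(H)$, and hence certainly a weak path morphism $P_G \to \mathcal{A}(H)$ for every intermediate $\Sk_1(\mathcal{A}(G)) \subseteq P_G \subseteq \mathcal{A}(G)$. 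The strong case is the same argument with the irregular alternative ruled out.

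For the reverse direction, I would pick any intermediate $P_G$ with $\Sk_1(\mathcal{A}(G)) \subseteq P_G$ and assume $f$ is a weak path morphism $P_G \to \mathcal{A}(H)$. Given an edge $v \to w$ in $G$, the one-path $vw$ lies in $\Sk_1(\mathcal{A}(G)) \subseteq P_G$, so $f(vw) = f(v)f(w)$ is either irregular (forcing $f(v) = f(w)$) or lies in $\mathcal{A}(H)$ (giving $f(v) \to f(w)$). In either case the weak digraph condition holds at the edge $v \to w$. The strong case again drops the irregular alternative, since $f(vw) \in \mathcal{A}(H)$ directly yields $f(v) \to f(w)$.

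Finally, for the ``in particular'' statement, faithfulness of $\mathcal{A}$ is immediate because morphisms in all four categories are underlying set maps on vertices, and $\mathcal{A}$ preserves the underlying vertex set. Fullness on $\ascat{WkDgr} \to \ascat{WkRPC}$ follows by specialising the biconditional to $P_G = \mathcal{A}(G)$: any weak path morphism $\mathcal{A}(G) \to \mathcal{A}(H)$ is in particular a weak path morphism out of some admissible $P_G$, hence comes from a weak digraph map. The strong case is identical. I do not anticipate a real obstacle here; the only point that requires a little care is tracking the asymmetry of the ``for some $P_G$'' quantifier across the two directions (the forward direction produces the strongest such morphism, out of all of $\mathcal{A}(G)$; the reverse direction only needs the weakest, namely $\Sk_1(\mathcal{A}(G))$), which is what makes the biconditional with the flexible $P_G$ work.
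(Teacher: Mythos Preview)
Your proposal is correct and takes essentially the same approach as the paper, which explicitly states that the result follows ``by a proof similar to that of Lemma~\ref{lem:dfl_functoriality}'' and gives no further details. Your adaptation---replacing the pairwise-edge condition for directed cliques with the consecutive-edge condition for allowed paths, and replacing ``non-simplicial'' with ``irregular'' in the weak case---is exactly the intended modification.
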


Finally, we note the following relation between these two constructions.

\begin{lemma}\label{lem:subfunctor}
When viewed as functors $\ascat{TcDgr} \to \ascat{WkRPC}$, $\dFl$ is a subfunctor of $\mathcal{A}$.
\end{lemma}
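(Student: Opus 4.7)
The plan is to verify the two defining conditions of a subfunctor: a componentwise inclusion $\iota_G : \dFl(G) \hookrightarrow \mathcal{A}(G)$ in $\ascat{WkRPC}$ for every object $G \in \ascat{TcDgr}$, and naturality of these inclusions with respect to morphisms of $\ascat{TcDgr}$. Both functors have already been shown to land in $\ascat{WkRPC}$: $\mathcal{A}$ does so by Lemma~\ref{lem:dig_map_iff_path_morph} (restricting from $\ascat{WkDgr}$ to the subcategory $\ascat{TcDgr}$), while $\dFl$ lands in $\ascat{TcOSC}$ by Lemma~\ref{lem:dfl_functoriality}, which is a full subcategory of $\ascat{WkRPC}$ by Lemma~\ref{lem:osc_weak_path_iff_tc}. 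So all four corners of the naturality square live in the target category.

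First I would establish the containment $\dFl(G) \subseteq \mathcal{A}(G)$ at the level of path sets. If $p = v_0 \dots v_k$ is a directed clique in $G$, then by definition $v_i \to v_j$ in $G$ whenever $i < j$; in particular $v_i \to v_{i+1}$ for each $i$, which is exactly the criterion for $p \in \mathcal{A}(G)$. Hence the identity vertex map $V(G) \to V(G)$ carries every element of $\dFl(G)$ to itself in $\mathcal{A}(G)$, and since such paths are simplicial (hence regular), this identity is trivially a weak path morphism $\iota_G : \dFl(G) \to \mathcal{A}(G)$.

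Next I would verify naturality: for any morphism $f : G \to H$ in $\ascat{TcDgr}$, the square with top side $\dFl(f)$, bottom side $\mathcal{A}(f)$, and vertical sides $\iota_G$ and $\iota_H$ commutes in $\ascat{WkRPC}$. Since morphisms in $\ascat{WkRPC}$ are, as data, just vertex maps satisfying a condition, commutativity reduces to checking equality of the underlying vertex maps. But $\iota_G$ and $\iota_H$ are given by the respective identity vertex maps, while both $\dFl(f)$ and $\mathcal{A}(f)$ are given by the vertex map $f : V(G) \to V(H)$. Thus both composites $\iota_H \circ \dFl(f)$ and $\mathcal{A}(f) \circ \iota_G$ are the vertex map $f$, and the square commutes.

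There is essentially no obstacle: the argument is pure bookkeeping, relying on the facts that (i) a directed clique automatically traverses edges between consecutive vertices, giving an allowed path, and (ii) both $\dFl$ and $\mathcal{A}$ act as the identity on the underlying vertex maps of morphisms, so naturality is immediate from definitions. The only subtlety worth flagging explicitly is the verification that the identity vertex map does define a morphism in $\ascat{WkRPC}$, which is precisely the content of step one.
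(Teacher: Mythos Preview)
Your proposal is correct and takes essentially the same approach as the paper: show $\dFl(G)\subseteq\mathcal{A}(G)$ because directed cliques are in particular allowed paths, then observe that naturality is immediate since both functors act on morphisms by the same underlying vertex map and the inclusions are identities on vertices. Your version is more explicit about the bookkeeping, but the argument is the same.
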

\begin{proof}
Given a digraph $G$, if $p=v_0 \dots v_k$ is a directed $k$-clique then $p$ is certainly an allowed path.
Therefore, $\dFl(G) \subset \mathcal{A}(G)$.
Moreover, given a triangle-collapsing digraph map $f: G \to H$, there are induced maps $\dFl(f): \dFl(G) \to \dFl(H)$ and $\mathcal{A}(f): \mathcal{A}(G) \to \mathcal{A}(H)$ which trivially commute with the inclusions.
\end{proof}

We summarise this section with the diagram of functors shown in Figure~\ref{fig:functor_diagram}.
Each black arrow is an inclusion of subcategories.
The vertical, dashed, black arrows are inclusions of wide subcategories. The horizontal, solid, black arrows are inclusions of full subcategories. Each orange arrow is the directed flag complex functor, $\dFl$, and each blue arrow is the allowed path complex functor, $\mathcal{A}$.
By Lemmas~\ref{lem:dfl_functoriality} and~\ref{lem:dig_map_iff_path_morph}, the coloured arrows are full and faithful functors.
The sub-diagram of black and orange arrows commutes, as does the sub-diagram of black and blue arrows.
Given two paths through the diagram between the same categories, one containing an orange edge and one containing a blue edge, the path containing an orange edge yields a subfunctor of the other.

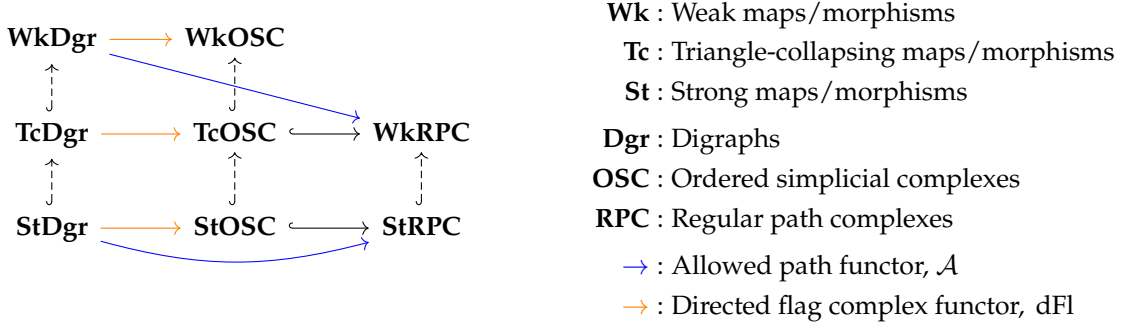
\begin{figure}[hbtp]
\centering
\begin{subfigure}[c]{0.49\textwidth}
\centering
\begin{tikzcd}
    \ascat{WkDgr} \arrow[r, orange] \arrow[rrd, blue] & \ascat{WkOSC}                       & \\
    \ascat{TcDgr} \arrow[r, orange] \arrow[u, dashed, hook] & \ascat{TcOSC} \arrow[u, dashed, hook] \arrow[r, hook] & \ascat{WkRPC}    \\
    \ascat{StDgr} \arrow[r, orange] \arrow[u, dashed, hook] \arrow[rr, blue, bend right=15] & \ascat{StOSC} \arrow[u, dashed, hook] \arrow[r, hook] & \ascat{StRPC} \arrow[u, dashed, hook] 
\end{tikzcd}
\end{subfigure}
\begin{subfigure}[c]{0.49\textwidth}
\begin{align*}
\ascat{Wk}&: \text{Weak maps/morphisms} \\
\ascat{Tc}&: \text{Triangle-collapsing maps/morphisms} \\
\ascat{St}&: \text{Strong maps/morphisms} \\[0.3em]
\ascat{Dgr}&: \text{Digraphs} \\
\ascat{OSC}&: \text{Ordered simplicial complexes} \\
\ascat{RPC}&: \text{Regular path complexes} \\[0.3em]
{\color{blue} \to } &: \text{Allowed path functor, }\mathcal{A}\\
{\color{orange} \to } &: \text{Directed flag complex functor, }\dFl
\end{align*}
\end{subfigure}
\caption{The categories defined in this section, and the functors between them.
The black arrows denote inclusions of subcategories; the dashed arrows are wide inclusions whilst the solid arrows are full inclusions.
The orange arrows denote $\dFl$
and the blue arrows denote $\mathcal{A}$.
}\label{fig:functor_diagram}
\end{figure}

\subsection{Chain complexes from regular path complexes}\label{sec:chain_complexes}
We now construct a chain complex which algebraically represents the combinatorial structure in a regular path complex.
This construction is due to \citeauthor{Grigoryan2012}~\cite{Grigoryan2012}.
It is possible to repeat this construction for any path complex $P$, by first applying the regularisation functor, $\toreg: \ascat{WkPathC} \to \ascat{WkRPC}$.

To begin, we choose an arbitrary field $\Field$ and
let $\mdf{\ascat{Ch}}$ denote the category of non-negatively graded chain complexes of vector spaces over $\Field$.
Given a regular path complex $P$, generate the following free $\Field$-vector spaces:
\begin{align}
\Lambda_k(P) &\defeq \Field \left\langle\{ p=v_0 \dots v_k \rmv p \text{ is an elementary }k\text{-path on }V(P) \}\right\rangle, \\
\mathcal{R}_k(P) &\defeq \Field \left\langle\{ p=v_0 \dots v_k \rmv p\text{ is a regular }k\text{-path on }V(P) \}\right\rangle,\\
\mathcal{I}_k(P) &\defeq \Field \left\langle\{ p=v_0 \dots v_k \rmv p \text{ is an irregular }k\text{-path on }V(P) \}\right\rangle, \\
C_k(P) &\defeq \Field \left\langle\{ p=v_0 \dots v_k \rmv p\in P \}\right\rangle.
\end{align}
We will not distinguish between paths and their corresponding generators in these free groups.
We drop $P$ from notation when it is clear from context.

\begin{rem}
\begin{enumerate}
\item The vector spaces $\Lambda_k(P)$, $\mathcal{R}_k(P)$ and $\mathcal{I}_k(P)$ each depend only on $V(P)$.
\item There is always a direct sum decomposition $\Lambda_k = \mathcal{R}_k \oplus \mathcal{I}_k$.
\item Since we only consider regular path complexes,  there is a subspace relation, $C_k(P) \subseteq \mathcal{R}_k(P)$.
\end{enumerate}
\end{rem}

We can define a \mdf{non-regular boundary map} amongst the $\Lambda_\bullet$ in the usual fashion:
given a standard basis element $v_0 \dots v_k \in \Lambda_k$, define
\begin{equation}
\bd[nreg]_k(v_0 \dots v_k) \defeq \sum_{i=0}^k (-1)^i v_0 \dots \hat{v_i} \dots v_k\label{eq:nonreg_bd}
\end{equation}
and then extend linearly to a map $\bd[nreg]_k : \Lambda_k \to \Lambda_{k-1}$.
As usual, $\bd[nreg]_{k-1} \circ \bd[nreg]_k = 0$~\cite[Lemma~2.3]{Grigoryan2012}.

Next, once can verify that $\bd_k(\mathcal{I}_k) \subseteq \mathcal{I}_{k-1}$~\cite[Lemma~2.9]{Grigoryan2012} and hence $\bd[nreg]_k$ passes to the quotient to define the \mdf{regular boundary map} $\bd_k : \mathcal{R}_k \to \mathcal{R}_{k-1}$.
Intuitively, this boundary map can be computed by following the usual formula (\ref{eq:nonreg_bd}) and then dropping any summands corresponding to irregular paths.
The chain complex equation passes to the quotient to give $\bd_{k-1} \circ \bd_k = 0$.

Unfortunately, this boundary map does not restrict to a map $C_k \to C_{k-1}$ because, a priori, $P$ is not a simplicial complex.
Instead, we must pass to a further subspace
\begin{equation}
    \Omega_k(P) \defeq \left\{ p \in C_k(P) \rmv \bd_k(p) \in C_{k-1}(P) \right\}.
\end{equation}
Notice that the chain complex equation implies that $\bd_k$ restricts to a boundary map $\bd_k : \Omega_k \to \Omega_{k-1}$.
Therefore, ${\{ \Omega_k, \bd_k \}}_{k\geq 0}$ forms a chain complex, which we call the \mdf{regular chain complex of $P$}.

In general, elements of $\Omega_k(P)$ are \textit{sums} of paths in $P$.
Indeed, there may not even be a basis of $\Omega_k(P)$ for which each basis element is supported on a single path.
Computing a basis for $\Omega_k(\mathcal{A}(G))$ is often non-trivial~\cite{Dey2020a}, especially for $k > 2$.

\begin{theorem}[{\cite[Theorem~2.10]{Grigoryan2014}}]
The regular chain complex of a regular path complex can be made into a functor
$\Omega: \ascat{WkRPC} \to \ascat{Ch}$.
\end{theorem}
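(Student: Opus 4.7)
My plan is to define $\Omega$ on morphisms by pushing a weak path morphism through the two-step construction (quotient by irregular paths, then restrict to the subspace $\Omega_k \subseteq C_k$) and to verify compatibility at each step. On objects, $\Omega(P)$ is already defined as the regular chain complex $\{\Omega_k(P), \bd_k\}_{k \geq 0}$ from the preceding discussion, so the content of the theorem is entirely in the construction on morphisms together with the functorial identities.

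Given a weak path morphism $f: P_1 \to P_2$, I first linearly extend the underlying vertex map to $f_*: \Lambda_k(P_1) \to \Lambda_k(P_2)$ by $f_*(v_0 \dots v_k) \defeq f(v_0) \dots f(v_k)$. A direct index computation on generators shows that $f_*$ commutes with the non-regular boundary of~(\ref{eq:nonreg_bd}), with no hypothesis on $f$ required. Next, I descend $f_*$ to the quotient $\mathcal{R}_k = \Lambda_k / \mathcal{I}_k$: the inclusion $f_*(\mathcal{I}_k(P_1)) \subseteq \mathcal{I}_k(P_2)$ is immediate because $v_i = v_{i+1}$ in the source forces $f(v_i) = f(v_{i+1})$ in the target. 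The induced map $\tilde f: \mathcal{R}_k(P_1) \to \mathcal{R}_k(P_2)$ sends a regular path $p$ to $f(p)$ when the latter is regular and to $0$ otherwise. Since the quotient intertwines the non-regular and the regular boundaries, $\tilde f$ is a chain map between regular chain complexes.

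The step I expect to be the main obstacle, and the only one that actually uses the weak path morphism hypothesis, is showing that $\tilde f$ restricts to a map $\Omega_\bullet(P_1) \to \Omega_\bullet(P_2)$. For each generator $p \in P_1$, the weak path morphism property forces $f(p) \in P_2$ or $f(p)$ to be irregular, so $\tilde f(p) \in C_k(P_2)$ in either case. By linearity $\tilde f(C_k(P_1)) \subseteq C_k(P_2)$. Hence for any $c \in \Omega_k(P_1)$, both $\tilde f(c) \in C_k(P_2)$ and $\bd \tilde f(c) = \tilde f(\bd c) \in \tilde f(C_{k-1}(P_1)) \subseteq C_{k-1}(P_2)$, so $\tilde f(c) \in \Omega_k(P_2)$. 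I then define $\Omega(f)$ to be this restriction, which is automatically a chain map.

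Functoriality is essentially free once the construction is in place: the identities $(g \circ f)_* = g_* \circ f_*$ and $(\mathrm{id})_* = \mathrm{id}$ are visible on generators of $\Lambda_k$, and both descend through the quotient by $\mathcal{I}_k$ and survive the restriction to $\Omega_k$. The conceptual content of the argument is thus recognising that $\Omega$ is built in two stages and that the weak path morphism condition is exactly the minimum needed to push a vertex map through the second stage; once that structural picture is clear, every verification reduces to a one-line check.
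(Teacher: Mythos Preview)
Your proof is correct and follows essentially the same approach as the paper. The only cosmetic difference is that the paper defines the induced map directly on $\mathcal{R}_k$ by the formula $p \mapsto f(p)$ if regular and $0$ otherwise, and then asserts it is a chain map, whereas you obtain the same map by first extending to $\Lambda_k$ and then passing to the quotient $\Lambda_k/\mathcal{I}_k \cong \mathcal{R}_k$, which makes the chain map property automatic; the key step of using the weak path morphism hypothesis to restrict to $\Omega_\bullet$ is identical in both.
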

\begin{proof}
Let $f: P_1 \to P_2$ be a weak path morphism of regular path complexes.
We will define $\inducedch{f}: \Omega(P_1) \to \Omega(P_2)$ on each component.
First we define maps $\mathcal{R}_k(f): \mathcal{R}_k(P_1) \to \mathcal{R}_k(P_2)$ on the standard basis.
Given an arbitrary path $p\in P_1$, let
\begin{equation}
\mathcal{R}_k(f)(p) \defeq
\begin{cases}
f(p) & \text{if }f(p)\text{ is regular,} \\
0    & \text{otherwise.}
\end{cases}
\end{equation}
One can verify that this yields a chain map and moreover, since $f$ is a weak path morphism, $\mathcal{R}_k(f)(\Omega_k(P_1)) \subseteq \Omega_k(P_2)$.
We let $\inducedch{f}$ denote the restriction of $\mathcal{R}_k(f)$ to $\Omega_k(P_1) \to \Omega_k(P_2)$.
The construction of $\mathcal{R}_k(f)$ is clearly functorial and hence so is the restriction.
\end{proof}

\begin{rem}
There is a corresponding non-regular chain complex, which repeats the construction above but with the non-regular boundary map $\bd[nreg]$.
This can distinguish between non-regular path complexes that share the same regularisation.
However, this construction can only be made into a functor $\ascat{StPathC} \to \ascat{Ch}$.
We refer the interested reader to~\cite{GRIGORYAN2019106877}.
\end{rem}

Given a weak path morphism of regular path complexes $f:P_1 \to P_2$, we let $\mdf{\inducedch{f}} \defeq \Omega(f)$ denote the chain map induced by $\Omega$.
We can compose $\Omega$ with the homology functor (in some degree $k$) to obtain a functor
$H_k \circ \Omega : \ascat{WkRPC} \to \ascat{Vec}$.
For brevity's sake, we denote $\mdf{H_k(P)}\defeq H_k(\Omega(P))$ for a regular path complex $P$ and $\mdf{\inducedhom{f}}\defeq H_k(\Omega(f))$ for a weak path morphism $f$.
We also use similar notational shortcuts when composing with the functor which takes homology in every degree simultaneously, viewed as a graded vector space, $H: \ascat{Ch} \to \ascat{grVec}$.
Finally, given an object or morphism in $\ascat{WkPathC}$, we use the same notation to denote these induced objects/morphisms, after first applying the $\toreg$ functor.

In the case where $P$ is in fact a simplicial complex, this construction simplifies substantially.
\begin{lemma}\label{lem:gens_of_osc}
If $P$ is an ordered simplicial complex then $\Omega_k(P) = C_k(P)$ for every $k$ and moreover
\begin{equation}
\bd_k(v_0 \dots v_k) \defeq \sum_{i=0}^k (-1)^i v_0 \dots \hat{v_i} \dots v_k\label{eq:reg_bd_osc}
\end{equation}
for every $v_0 \dots v_k \in P$.
\end{lemma}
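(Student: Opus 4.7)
The plan is to show both inclusions $C_k(P) \subseteq \Omega_k(P)$ and $\Omega_k(P) \subseteq C_k(P)$, and then verify that the regular boundary map reduces to the usual simplicial boundary formula. The reverse inclusion $\Omega_k(P) \subseteq C_k(P)$ is free: it is built into the definition of $\Omega_k(P)$ as a subspace of $C_k(P)$.

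For the forward inclusion, I would take an arbitrary basis element $v_0 \dots v_k \in P$ and check that $\bd_k(v_0 \dots v_k) \in C_{k-1}(P)$. Since $P$ is an ordered simplicial complex, every element of $P$ is a simplicial path, meaning all its vertices are distinct. In particular, each face $v_0 \dots \hat{v_i} \dots v_k$ also has all distinct vertices, so it is simplicial and a fortiori regular. Thus no summand in the formula~(\ref{eq:nonreg_bd}) is irregular, and the regular boundary map $\bd_k$ agrees with the non-regular one $\bd[nreg]_k$ on the generators of $C_k(P)$. Moreover, by condition~(2) of Definition~\ref{def:osc}, each face $v_0 \dots \hat{v_i} \dots v_k$ lies in $P$, so $\bd_k(v_0 \dots v_k) \in C_{k-1}(P)$. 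This shows $v_0 \dots v_k \in \Omega_k(P)$, and since these span $C_k(P)$ we get $C_k(P) \subseteq \Omega_k(P)$.

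The boundary formula~(\ref{eq:reg_bd_osc}) then follows from the same observation: on a simplex $v_0 \dots v_k$, all the face paths are regular, so the regular boundary $\bd_k$ (obtained from $\bd[nreg]_k$ by dropping irregular summands) coincides with the non-regular formula in~(\ref{eq:nonreg_bd}).

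There is no substantive obstacle; the argument is essentially a check that the two defining conditions of an ordered simplicial complex (simpliciality of paths, closure under ordered subsets) are exactly what is needed to ensure that $C_k(P)$ is already closed under the regular boundary map, so no passage to a further subspace is required.
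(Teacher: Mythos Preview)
Your proposal is correct and essentially identical to the paper's proof: both note that simpliciality of $v_0\dots v_k$ makes every face regular (so the regular boundary agrees with~(\ref{eq:nonreg_bd})), and closure under ordered subsets forces each face into $P$, giving $C_k(P)\subseteq\Omega_k(P)$; the reverse inclusion is by definition.
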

\begin{proof}
Given a path $p=v_0 \dots v_k \in P$, all the vertices are distinct because $p$ is simplicial.
Therefore, all the sub-paths $v_0 \dots \hat{v_i} \dots v_k$ are regular and thus equation (\ref{eq:reg_bd_osc}) correctly computes the \textit{regular} boundary map.
Next, all the sub-paths belong to $P$ because $P$ is a simplicial complex and thus $\bd_k(p) \in C_{k-1}(P)$ and moreover $p \in \Omega_k(P)$.
Since $p$ was arbitrary, this shows that $C_k(P) \subseteq \Omega_k(P)$.
\end{proof}

If $P=\mathcal{A}(G)$ or $P=\dFl(G)$ for some digraph $G$, then $H_k(P)$ can be used as a functorial, algebraic invariant of $G$.
For example, $H_0$ encodes the weakly connected components of $G$.

\begin{prop}[{\cite[Proposition~3.24]{Grigoryan2012}}]
Given any digraph $G$,
$H_0(\dFl(G)) = H_0(\mathcal{A}(G))$
and moreover $\dim H_0(\dFl(G))$ is the number of weakly connected components of $G$.
\end{prop}
 \section{Systems of  one-step homotopies}\label{sec:abstract_1sh}

All the homotopies presented herein are generated by a class of directional, one-step homotopies.
The one-step homotopies determine a binary relation on the morphisms, which is subsequently completed to an equivalence relation by identifying `zig-zags' of one-step homotopies.
In this section, we present a small framework for developing these systems of one-step homotopies and relating them via functors.

\begin{defin}
Fix a locally small category $\C$.
\begin{enumerate}
\item A \mdf{system of one-step homotopies} $\Sys$ for $\C$ is a choice of digraph $\Sys(C_1, C_2)$ on the vertex set $\MorXY{\C}{C_1}{C_2}$, for every pair of objects $C_1, C_2\in C$.
\item We call an edge $f \to g$ in $\Sys(C_1, C_2)$ a \mdf{one-step $\Sys$-homotopy from $f$ to $g$}.
A priori, this may not necessarily correspond to a specific morphism in $\C$, but this will often be the vase.
\item If there is an edge $f \to g$ or $g \to f$ in $\Sys(C_1, C_2)$ then we say $f$ and $g$ are \mdf{one-step $\Sys$-homotopic} and write \mdf{$f \simeq_{\Sys, 1} g$}.
\item We say two morphisms $f, g$ are \mdf{$\Sys$-homotopic}, and write \mdf{$f\simeq_{\Sys} g$}, if they belong to the same weak path component of $\Sys(C_1, C_2)$, i.e.\ if there is a sequence of morphisms
\begin{equation}
f = f_0, f_1, \dots, f_{m-1}, f_m=g
\end{equation}
such that $f_{i-1} \simeq_{\Sys, 1} f_i$ for every $i$.
We refer to such a sequence of morphisms as a \mdf{multi-step $\Sys$-homotopy from $f$ to $g$}.
\end{enumerate}
For brevity, we drop the subscript $\Sys$ from the notation when it is clear from context.
\end{defin}

\begin{rem}
By construction, $\simeq_{\Sys}$ is an equivalence relation on $\MorXY{\C}{C_1}{C_2}$.
\end{rem}

A system of one-step homotopies can be pulled back through a functor.
Under sufficient conditions this pull-back can induce `the same' equivalence relation on the morphisms.

\begin{defin}
Given a functor $\mu:\ascat{C}\to \ascat{D}$ and a system of one-step homotopies $\Sys$ for $\D$, the
\mdf{pull-back} is the system of one-step homotopies \mdf{$\mu^\ast \Sys$} for $\C$ given by
\begin{equation}
f \to g\text{ in }\mu^\ast \Sys(C_1, C_2) \iff \mu(f) \tooreq \mu(g) \text{ in }\Sys(\mu(C_1), \mu(C_2)).
\end{equation}
\end{defin}

\begin{rem}
Given a pair of functors $\mu: \C_0 \to \C_1$, $\nu: \C_1 \to \C_2$ and a system of one-step homotopies $\Sys$ for $\C_2$, one can check that $(\nu \circ \mu)^\ast \Sys = \mu^\ast \nu^\ast \Sys$.
\end{rem}

For every pair of objects, $\mu$ defines a weak digraph map $\mu^\ast \Sys(C_1, C_2) \to \Sys(\mu(C_1), \mu(C_2))$.
Indeed, an alternative definition of $\mu^\ast \Sys$ is that $\mu^\ast \Sys(C_1, C_2)$ is the largest digraph on $\MorXY{\C}{C_1}{C_2}$ such that $\mu$ induces such a weak digraph map.
If $\mu$ is faithful then each of these vertex maps is injective and hence forms a strong digraph map.
If $\mu$ is full and faithful then each map is bijective and hence forms an isomorphism of digraphs.

\begin{lemma}\label{lem:system_pullback}
Given a functor $\mu:\ascat{C}\to \ascat{D}$ and a system of one-step homotopies $\Sys$ for $\D$ then
\begin{equation}
f \simeq_{\mu^\ast \Sys} g \implies \mu(f) \simeq_{\Sys} \mu(g).
\end{equation}
If $\mu$ is a full functor then
\begin{equation}
f \simeq_{\mu^\ast \Sys} g \iff \mu(f) \simeq_{\Sys} \mu(g).
\end{equation}
\end{lemma}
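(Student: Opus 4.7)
The plan is to prove both directions by unpacking the definition of a multi-step homotopy as a finite sequence of one-step homotopies and applying the pull-back definition edge by edge.

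For the forward direction, I would begin with a multi-step $\mu^\ast \Sys$-homotopy $f = f_0, f_1, \dots, f_m = g$, so that for each $i$ there is an edge $f_{i-1} \to f_i$ or $f_i \to f_{i-1}$ in $\mu^\ast \Sys(C_1, C_2)$. By the definition of the pull-back, each such edge yields either $\mu(f_{i-1}) \tooreq \mu(f_i)$ or $\mu(f_i) \tooreq \mu(f_{i-1})$. In the equality case the two images coincide; in the strict edge case they are one-step $\Sys$-homotopic. After deleting repetitions (or allowing them as trivial steps), the sequence $\mu(f_0), \mu(f_1), \dots, \mu(f_m)$ is a multi-step $\Sys$-homotopy from $\mu(f)$ to $\mu(g)$, so $\mu(f) \simeq_\Sys \mu(g)$.

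For the converse under the hypothesis that $\mu$ is full, I would start with a multi-step $\Sys$-homotopy $\mu(f) = h_0, h_1, \dots, h_m = \mu(g)$ in $\Sys(\mu(C_1), \mu(C_2))$ and lift it term by term. Fullness of $\mu$ guarantees that for each $0 < i < m$ we may choose some $f_i \in \MorXY{\C}{C_1}{C_2}$ with $\mu(f_i) = h_i$; at the endpoints we simply take $f_0 \defeq f$ and $f_m \defeq g$, so that $\mu(f_0) = h_0$ and $\mu(f_m) = h_m$ automatically. For each $i$, the one-step $\Sys$-homotopy between $h_{i-1}$ and $h_i$ gives either $\mu(f_{i-1}) \tooreq \mu(f_i)$ or $\mu(f_i) \tooreq \mu(f_{i-1})$, which by the definition of $\mu^\ast \Sys$ is exactly a one-step $\mu^\ast \Sys$-homotopy between $f_{i-1}$ and $f_i$. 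Concatenating these gives a multi-step $\mu^\ast \Sys$-homotopy from $f$ to $g$.

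There is no genuine obstacle here; the only subtlety is that the pull-back definition uses $\tooreq$ rather than $\to$, so an equality $\mu(f_{i-1}) = \mu(f_i)$ already installs a one-step homotopy in $\mu^\ast \Sys$ even when the images coincide. This matters precisely because $\mu$ need not be faithful, so distinct lifts of the same morphism must still be regarded as homotopic in $\mu^\ast \Sys$; the $\tooreq$ in the definition is exactly what ensures the lifting argument in the converse does not get stuck when consecutive $h_i$'s happen to coincide or when a chosen lift differs from a preferred representative.
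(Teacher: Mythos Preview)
Your proof is correct and follows essentially the same approach as the paper's: both arguments unpack the multi-step homotopy as a finite sequence, push forward or lift term by term using the definition of the pull-back, and use fullness in the converse to choose preimages of the intermediate morphisms. Your explicit remark about the role of $\tooreq$ in the pull-back definition (handling the case where $\mu$ is not faithful) is a nice clarification that the paper leaves implicit.
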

\begin{proof}
If $f\simeq_{\mu^\ast \Sys} g$ then there is a sequence of morphisms
\begin{equation}
f = f_0, f_1, \dots, f_{m-1}, f_m=g
\end{equation}
such that $f_{i-1} \to f_i$ in $\mu^\ast \Sys(C_1, C_2)$ for every $i$.
Hence,
\begin{equation}\label{eq:seq_of_mu_morphisms}
\mu(f) = \mu(f_0), \mu(f_1), \dots, \mu(f_{m-1}), \mu(f_m)=\mu(g)
\end{equation}
is a sequence of morphisms, such that for every $i$ either $\mu(f_{i-1}) = \mu(f_i)$ or $\mu(f_{i-1}) \to \mu(f_i)$ in $\Sys(\mu(C_1), \mu(C_2))$.
After removing duplicate morphisms, we see $\mu(f)\simeq_{\Sys} \mu(g)$.

If $\mu$ is full then, by definition, $\mu$ is surjective as a vertex map $\MorXY{\C}{C_1}{C_2} \to \MorXY{\D}{\mu(C_1)}{\mu(C_2)}$.
Hence, if $\mu(f)\simeq_{\Sys} \mu(g)$ then there is a sequence of morphisms as in equation (\ref{eq:seq_of_mu_morphisms}) 
such that $\mu(f_{i-1}) \to \mu(f_i)$ in ${\Sys}(\mu(C_1), \mu(C_2))$ for every $i$.
By the construction of the pull-back
\begin{equation}
f = f_0, f_1, \dots, f_{m-1}, f_m=g
\end{equation}
is a sequence of morphisms showing that $f\simeq_{\mu^\ast\Sys} g$.
\end{proof}

This equivalence relation for morphisms can induce an equivalence relation on the objects of $\C$, in the standard way.
However, we must first check that the relations between the morphisms compose transitively.

\begin{defin}
Let ${\Sys}$ be a system of one-step homotopies for a category $\C$.
We say two objects $C_1, C_2\in\C$ are \mdf{${\Sys}$-homotopy equivalent}, and write \mdf{$C_1 \simeq_{\Sys} C_2$}, if there are a pair of morphisms $f: C_1 \to C_2$ and $g: C_2 \to C_1$ such that
\begin{equation}\label{eq:obj_t_equiv}
g \circ f \simeq_{\Sys} \id_{C_1}
\quad \text{and} \quad
f \circ g \simeq_{\Sys} \id_{C_2}.
\end{equation}
For brevity, we drop the subscript ${\Sys}$ from the notation $\simeq_{\Sys}$ when it is clear from context.
\end{defin}

\begin{defin}
Let ${\Sys}$ be a system of one-step homotopies for a category $\C$.
We say ${\Sys}$ is \mdf{transitive}
if given morphisms $g_1, g_2 \in\MorXY{\C}{C_1}{C_2}$, $f\in \MorXY{\C}{C_0}{C_1}$ and $h\in\MorXY{\C}{C_2}{C_3}$, such that
$g_1 \simeq_{\Sys} g_2$,
then
$g_1 \circ f \simeq_{\Sys} g_2 \circ f$
and
$h \circ g_1 \simeq_{\Sys} h \circ g_2$.
\begin{equation}
\begin{tikzcd}
C_0 \arrow[r, "f"] & C_1 \arrow[r, bend left, "g_1", ""{name=U, below}] \arrow[r, bend right, "g_2"', ""{name=D}] & C_2 \arrow[r, "h"] & C_3 \arrow[phantom, "{\simeq_{\Sys}}"{sloped}, from=U, to=D]
\end{tikzcd}
\end{equation}
\end{defin}

\begin{lemma}\label{lem:system_transitive}
Let ${\Sys}$ be a system of one-step homotopies for a category $\C$.
If ${\Sys}$ is transitive then $\simeq_{\Sys}$ is an equivalence relation on the objects of $\C$.
\end{lemma}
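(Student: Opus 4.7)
The plan is to verify the three defining properties of an equivalence relation for $\simeq_{\Sys}$ on $\Obj(\C)$, using the transitivity hypothesis only where it is truly needed (for genuine transitivity of the relation on objects).

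Reflexivity is immediate: for any object $C$, take $f = g = \id_C$, so $g \circ f = \id_C \simeq_{\Sys} \id_C$ trivially. Symmetry is equally immediate: if $f: C_1 \to C_2$ and $g: C_2 \to C_1$ witness $C_1 \simeq_{\Sys} C_2$, then the pair $(g, f)$ witnesses $C_2 \simeq_{\Sys} C_1$, since the conditions~(\ref{eq:obj_t_equiv}) are symmetric in $f$ and $g$. Neither of these steps needs transitivity of the system.

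The substantive step is transitivity of $\simeq_{\Sys}$ on objects. Suppose $C_1 \simeq_{\Sys} C_2$ via $f_1: C_1 \to C_2$, $g_1: C_2 \to C_1$ and $C_2 \simeq_{\Sys} C_3$ via $f_2: C_2 \to C_3$, $g_2: C_3 \to C_2$. I will propose the composites $f_2 \circ f_1: C_1 \to C_3$ and $g_1 \circ g_2: C_3 \to C_1$ as the witnesses for $C_1 \simeq_{\Sys} C_3$. The task is then to check
\begin{equation}
(g_1 \circ g_2) \circ (f_2 \circ f_1) \simeq_{\Sys} \id_{C_1}, \qquad (f_2 \circ f_1) \circ (g_1 \circ g_2) \simeq_{\Sys} \id_{C_3}.
\end{equation}
Rewriting the first composite via associativity as $g_1 \circ (g_2 \circ f_2) \circ f_1$, I apply the transitivity property of $\Sys$: the relation $g_2 \circ f_2 \simeq_{\Sys} \id_{C_2}$ is preserved under pre-composition with $f_1$ on the right and post-composition with $g_1$ on the left, yielding $g_1 \circ (g_2 \circ f_2) \circ f_1 \simeq_{\Sys} g_1 \circ \id_{C_2} \circ f_1 = g_1 \circ f_1$. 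Since $g_1 \circ f_1 \simeq_{\Sys} \id_{C_1}$ by hypothesis, transitivity of $\simeq_{\Sys}$ as an equivalence relation on $\MorXY{\C}{C_1}{C_1}$ closes the argument. The second composite is handled symmetrically, rewriting it as $f_2 \circ (f_1 \circ g_1) \circ g_2$.

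The only real obstacle is the bookkeeping in the transitive step, namely applying the transitivity property of $\Sys$ twice in succession (once for each sandwiched factor) and then appealing to the fact that $\simeq_{\Sys}$ is already known to be an equivalence relation on each hom-set. No deeper machinery is required, and the argument does not depend on any specifics of $\C$ beyond the transitivity assumption on the system.
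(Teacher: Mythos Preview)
Your proof is correct and follows essentially the same approach as the paper: reflexivity and symmetry are immediate, and transitivity is established by taking the composite maps $f_2 \circ f_1$ and $g_1 \circ g_2$, rewriting $(g_1 \circ g_2)\circ(f_2 \circ f_1) = g_1 \circ (g_2 \circ f_2) \circ f_1$, and using the transitivity of the system to reduce this to $g_1 \circ f_1 \simeq_{\Sys} \id$. Your bookkeeping remark about applying the transitivity property twice (once for each side) is accurate and slightly more explicit than the paper's single-line computation.
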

\begin{proof}
It is clear that $\simeq_{\Sys}$ is symmetric and reflexive, so
it remains to show that $\simeq_{\Sys}$ is transitive.
Suppose $C_0\simeq_{\Sys} C_1$ and $C_1\simeq_{\Sys} C_2$, then we have morphisms in the following arrangement
\begin{equation}
\begin{tikzcd}
C_0
\arrow[r, "f_1", bend left, ""{name=U1, below}] &
C_1
\arrow[l, bend left, "g_1", ""{name=D1, below}]
\arrow[r, bend left, "f_2", ""{name=U2, below}] &
C_2
\arrow[l, bend left, "g_2", ""{name=D2, below}]
\end{tikzcd}
\end{equation}
such that each $f_i, g_i$ pair satisfy the equations~(\ref{eq:obj_t_equiv}).
Then
\begin{equation}
(g_1\circ g_2) \circ (f_2 \circ f_1)
= g_1 \circ (g_2 \circ f_2) \circ f_1
\simeq_{\Sys} g_1 \circ \id_{C_1} \circ f_1
= g_1 \circ f_1
\simeq_{\Sys} \id_{C_0},
\end{equation}
where in the first $\simeq_{\Sys}$ step we use the transitivity of ${\Sys}$.
A similar proof shows that the opposite composition is ${\Sys}$-homotopic to $\id_{C_2}$.
Hence, we see $C_0 \simeq C_2$.
\end{proof}

A standard application of Lemmas~\ref{lem:system_pullback} and~\ref{lem:system_transitive} yields the following corollaries.

\begin{cor}\label{cor:pullback_transitivity}
Given a full functor $\mu:\ascat{C} \to \ascat{D}$ and a system of one-step homotopies ${\Sys}$ for $\D$, if ${\Sys}$ is transitive then the pull-back $\mu^\ast {\Sys}$ is transitive.
\end{cor}

\begin{cor}
Given a functor $\mu:\ascat{C} \to \ascat{D}$ and a system of one-step homotopies ${\Sys}$ for $\D$,
\begin{equation}
C_1 \simeq_{\mu^\ast {\Sys}} C_2 \implies \mu(C_1) \simeq_{\Sys} \mu(C_2).
\end{equation}
If $\mu$ is a full functor then
\begin{equation}
C_1 \simeq_{\mu^\ast {\Sys}} C_2 \iff \mu(C_1) \simeq_{\Sys} \mu(C_2).
\end{equation}
\end{cor}

One way to construct a system of one-step homotopies is via a choice of cylinder functor~ (see \cite{kamps1997abstract} for an introduction).

\begin{defin}
A \mdf{cylinder functor} for a category $\C$ is a functor
\begin{equation}
\Cyl: \C \to \C
\end{equation}
equipped with three natural transformations
\begin{equation}
\iota_0, \iota_1 : \id_{\C} \Rightarrow \Cyl,
\quad\text{and}\quad
\rho: \Cyl \Rightarrow \id_{\C},
\end{equation}
satisfying $\rho \iota_i = \id_c$ for each $i=0, 1$.
\end{defin}

With such a cylinder functor in hand, one can construct a system of one-step homotopies $\Sys$ for $\C$ as follows.
Given $f, g: C_1 \to C_2$, a \mdf{one-step ${\Sys}$-homotopy from $f$ to $g$} is a morphism $F: \Cyl(C_1) \to C_2$ such that
\begin{equation}
F \circ \iota_0 = f
\quad\text{and}\quad
F \circ \iota_1 = g.
\end{equation}
This determines the system $\Sys$ by including an arrow $f \to g$ in $\Sys(C_1, C_2)$ whenever such a one-step $\Sys$-homotopy exists.
In a slight abuse of terminology, we use the term `one-step $\Sys$-homotopy' to refer interchangeably between the edge in $\Sys(C_1, C_2)$ and a morphism that implies its existence.
We refer to $\Sys$ as the system of one-step homotopies associated to $\Cyl$.

\begin{lemma}[{\cite[Lemma~2.3]{kamps1997abstract}}]
The system of one-step homotopies associated to a cylinder functor is transitive.
\end{lemma}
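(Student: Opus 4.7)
The plan is to unpack the definition of $\simeq_{\Sys}$ as the equivalence relation generated by one-step $\Sys$-homotopies and then reduce transitivity to a statement about individual one-step homotopies, which can be verified directly from the universal properties of $\Cyl$.

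First I would observe that if $g_1 \simeq_{\Sys} g_2$ is witnessed by a zig-zag of one-step homotopies $g_1 = h_0, h_1, \dots, h_m = g_2$, then by induction on $m$ it suffices to handle a single edge, i.e.\ to show that whenever $g_1 \to g_2$ (or $g_2 \to g_1$) in $\Sys(C_1,C_2)$, we have $h \circ g_1 \to h \circ g_2$ in $\Sys(C_1,C_3)$ and $g_1 \circ f \to g_2 \circ f$ in $\Sys(C_0,C_2)$. Thus we may assume we have a witnessing morphism $G: \Cyl(C_1) \to C_2$ satisfying $G \circ \iota_0 = g_1$ and $G \circ \iota_1 = g_2$.

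For post-composition, the candidate one-step homotopy from $h \circ g_1$ to $h \circ g_2$ is simply $h \circ G: \Cyl(C_1) \to C_3$. Composing on the left preserves the identities $G \circ \iota_i = g_i$, so $(h \circ G) \circ \iota_0 = h \circ g_1$ and $(h \circ G) \circ \iota_1 = h \circ g_2$, as required.

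For pre-composition, the candidate is $G \circ \Cyl(f): \Cyl(C_0) \to C_2$. Here the essential input is the naturality of $\iota_0$ and $\iota_1$, which gives $\Cyl(f) \circ \iota_i^{C_0} = \iota_i^{C_1} \circ f$ for $i = 0,1$. Hence
\begin{equation}
(G \circ \Cyl(f)) \circ \iota_0^{C_0}
= G \circ \iota_0^{C_1} \circ f
= g_1 \circ f,
\end{equation}
and likewise $(G \circ \Cyl(f)) \circ \iota_1^{C_0} = g_2 \circ f$, so $G \circ \Cyl(f)$ is a one-step $\Sys$-homotopy from $g_1 \circ f$ to $g_2 \circ f$. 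The dual case of an edge $g_2 \to g_1$ is handled symmetrically, giving an edge in the opposite direction after composition, which still yields a one-step homotopy in the sense of $\simeq_{\Sys,1}$.

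There is no real obstacle here: the proof is an unwinding of definitions, and the only substantive ingredient is the naturality of the cylinder's endpoint inclusions, which is exactly what lets pre-composition pass through $\Cyl$. The brief reduction to the one-step case at the start is what allows the argument to stay short.
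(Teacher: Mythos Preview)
Your argument is correct and is the standard proof of this fact. The paper does not give its own proof of this lemma; it simply cites \cite[Lemma~2.3]{kamps1997abstract}, so there is nothing to compare against beyond noting that your unwinding---post-compose the homotopy with $h$, pre-compose with $\Cyl(f)$ and use naturality of $\iota_0,\iota_1$---is exactly the classical argument one finds in that reference.
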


In this work, we opt for slightly more generality than cylinder functors in order to emphasise the important properties of the resultant systems.
Moreover, in Section~\ref{sec:dflag_homotopy}, we will see that it is not clear how to construct an appropriate cylinder functor in $\ascat{TcDgr}$ without artificially inflating the category.

 \section{Homotopies for ordered simplicial complexes}\label{sec:big_osc_homotopy_sec}
In this section, we develop a system of one-step homotopies for ordered simplicial complexes, so that two homotopic simplicial morphisms induce identical maps on homology, through the functor $H \circ \Omega: \ascat{TcOSC} \to \ascat{grVec}$.
In Section~\ref{sec:homotopy_theory_pathc}, we recall one such system for $\ascat{WkRPC}$,
which was first defined in~\cite{GRIGORYAN2019106877} in terms of a cylinder functor $\Cyl$ for $\ascat{WkRPC}$.

In Section~\ref{sec:homotopy_theory_osc}, we pull back along the full inclusion $\ascat{TcOSC} \hookrightarrow \ascat{WkRPC}$, to obtain a system for ordered simplicial complexes.
We then describe this pull-back in terms of a cylinder functor $\overline{\Cyl}$ for $\ascat{TcOSC}$.
More precisely, in Theorem~\ref{thm:characterise_osc_homot}, we show that a vertex map $F$ induces a one-step homotopy with respect to $\overline{\Cyl}$ if and only if it induces one with respect to $\Cyl$.

For completeness, in Section~\ref{sec:sset} we instead consider embedding $\ascat{TcOSC}$ in the category of simplicial sets.
We repeat the process above for this inclusion, and show that the system of homotopies we obtain is identical to the one obtained in Section~\ref{sec:homotopy_theory_osc}.

\subsection{Homotopies for path complexes}\label{sec:homotopy_theory_pathc}

\begin{defin}
Given a path complex $P$, the \mdf{cylinder over $P$}, \mdf{$\Cyl(P)$}, is a new path complex on $V(P) \times \{0, 1\}$ containing paths of the following forms
\begin{enumerate}
    \item $(v_0, 0) \dots (v_k, 0)$ such that $v_0 \dots v_k \in P$;
    \item $(v_0, 1) \dots (v_k, 1)$ such that $v_0 \dots v_k \in P$;
    \item $(v_0, 0) \dots (v_i, 0) (v_i, 1) \dots (v_k, 1)$ such that $v_0 \dots v_k \in P$ and $0\leq i \leq k$.
\end{enumerate}
\end{defin}
This can be made into an endofunctor on $\ascat{WkPathC}$ by sending a weak path morphism $f: P_1 \to P_2$ to the weak path morphism $\Cyl(f):\Cyl(P_1) \to \Cyl(P_2)$, given by $(v, i) \mapsto(f(v), i)$.
Moreover, there are weak path morphisms 
\begin{align}
\iota_i&: P \to \Cyl(P), \quad v \mapsto (v, i), \\
\rho   &: \Cyl(P) \to P, \quad (v, i)\mapsto v
\end{align}
for $i=0, 1$.
These formulae determine natural transformations which,
together with $\Cyl$, constitute a cylinder functor for $\ascat{WkPathC}$.
Since the cylinder of a regular path complex is again a regular path complex, this restricts to a cylinder functor on $\ascat{WkRPC}$.
We let $\Sys[\ascat{WkPathC}]$ and $\Sys[\ascat{WkRPC}]$ denote the associated systems of one-step homotopies.
Clearly, $\Sys[\ascat{WkRPC}] = \iota^\ast \Sys[\ascat{WkPathC}]$ where $\iota:\ascat{WkRPC}\hookrightarrow\ascat{WkPathC}$ is the inclusion of full subcategories.

As one might expect, the homology of a path complex is indeed invariant with respect to homotopy equivalence.

\begin{theorem}[{\cite[Theorem~3.8]{GRIGORYAN2019106877}}]\label{thm:homotopy_yields_chain_homotopy}\label{thm:path_complex_homot_invariance}
Suppose $f, g: P_1 \to P_2$ are $\Sys[\ascat{WkRPC}]$-homotopic weak path morphisms of regular path complexes,
 then there is a chain homotopy between $\inducedch{f}$ and $\inducedch{g}$.
Therefore, $f$ and $g$ induce identical maps on homology, $\inducedhom{f}$ and $\inducedhom{g}$.
In particular, if $P_1 \simeq P_2$ then the homology groups $H(P_1)$ and $H(P_2)$ are isomorphic.
\end{theorem}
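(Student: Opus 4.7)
The plan is to prove the theorem in three stages: first reduce to the one-step case, then exhibit a prism operator, and finally push the chain homotopy statement through the homology functor.

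First, I would observe that an $\Sys[\ascat{WkRPC}]$-homotopy between $f$ and $g$ is by definition a finite zig-zag of one-step homotopies, and chain homotopy is an equivalence relation that is preserved under reversal of direction. Therefore it suffices to build a chain homotopy between $\inducedch{f}$ and $\inducedch{g}$ under the assumption that there is a single weak path morphism $F : \Cyl(P_1) \to P_2$ satisfying $F \circ \iota_0 = f$ and $F \circ \iota_1 = g$; zig-zags are then handled by concatenating the resulting chain homotopies (with appropriate signs).

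Next, I would introduce the standard \emph{prism operator} modelled on the cylinder. For a regular elementary $k$-path $v_0 \dots v_k$ in $\Lambda_k(P_1)$, define
\begin{equation}
L_k(v_0\dots v_k) \defeq \sum_{i=0}^{k} (-1)^i\, (v_0,0)\dots(v_i,0)(v_i,1)\dots(v_k,1),
\end{equation}
which, by the third bullet in the definition of $\Cyl(P_1)$, lies in $\mathcal{R}_{k+1}(\Cyl(P_1))$ whenever the original path lies in $P_1$, and indeed in $\Omega_{k+1}(\Cyl(P_1))$ once one checks the boundary lands in $C_{k}(\Cyl(P_1))$. I would then set
\begin{equation}
h_k \defeq \inducedch{F} \circ L_k : \Omega_k(P_1) \to \Omega_{k+1}(P_2).
\end{equation}
Because $\inducedch{F}$ kills irregular paths, any summand of $L_k$ whose image under $F$ collapses will simply vanish, so $h_k$ is well-defined even though $F$ is only a weak morphism. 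The hard part, and really the only computational content, is verifying the prism identity
\begin{equation}
\partial_{k+1} L_k + L_{k-1} \partial_k = (\iota_1)_\sharp - (\iota_0)_\sharp
\end{equation}
on $\Omega_k(P_1)$, where the right-hand side uses the chain maps induced by $\iota_0, \iota_1 : P_1 \to \Cyl(P_1)$. This is a standard telescoping sum: each $(-1)^i$ prism in $L_k$ contributes two boundary terms, one removing $(v_i,0)$ and one removing $(v_i,1)$, which cancel pairwise in adjacent summands, leaving only the pure top and bottom copies; I would carry out this calculation at the level of $\mathcal{R}_\bullet$ (dropping irregular summands automatically at the end) to avoid having to worry about whether intermediate paths lie in $\Cyl(P_1)$. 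Applying $\inducedch{F}$ to both sides and using $F \circ \iota_0 = f$, $F \circ \iota_1 = g$ together with the functoriality of $\Omega$ then yields
\begin{equation}
\partial_{k+1} h_k + h_{k-1} \partial_k = \inducedch{g} - \inducedch{f},
\end{equation}
which is exactly the required chain homotopy.

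Finally, passing to the homology functor $H_k$ immediately gives $\inducedhom{f} = \inducedhom{g}$, proving the second assertion. For the last assertion, a homotopy equivalence $P_1 \simeq P_2$ supplies morphisms $\alpha : P_1 \to P_2$ and $\beta : P_2 \to P_1$ with $\beta \alpha \simeq_{\Sys[\ascat{WkRPC}]} \id_{P_1}$ and $\alpha \beta \simeq_{\Sys[\ascat{WkRPC}]} \id_{P_2}$; the second assertion, together with functoriality of $H \circ \Omega$, then gives $\inducedhom{\beta} \circ \inducedhom{\alpha} = \id$ and $\inducedhom{\alpha} \circ \inducedhom{\beta} = \id$, so $H(P_1) \cong H(P_2)$. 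The main obstacle throughout is verifying the prism identity with the correct bookkeeping of irregular paths, but since $\inducedch{F}$ annihilates them and the decomposition of $\Cyl(P_1)$ was designed precisely to accommodate the prism summands, this reduces to the usual simplicial argument.
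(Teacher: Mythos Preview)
Your proposal is correct and follows essentially the same route as the paper: the paper introduces the identical prism operator (there called the \emph{lifting map} $\mathfrak{L}$), invokes the same prism identity (stated as a ``Product Rule'' lemma, cited from \cite{GRIGORYAN2019106877} rather than re-derived), composes with $\inducedch{F}$ exactly as you do, and then handles the multi-step zig-zag by summing the one-step chain homotopies with signs $\alpha_i = \pm 1$ according to direction. The only cosmetic difference is that you propose to verify the telescoping prism identity directly whereas the paper outsources it to the cited lemma.
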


We automatically inherit a similar result for any pull-back of $\Sys[\ascat{WkRPC}]$.

\begin{cor}\label{cor:pullback_homotopy_induces_identical}
Given a functor $\mu: \C \to \ascat{WkPathC}$, denote the pull-back $T\defeq \mu^\ast \Sys[\ascat{WkPathC}]$.
If $f\simeq_T g$, then $(H\circ \mu)(f)$ and $(H\circ \mu)(g)$ are identical maps on homology.
In particular, if $C_1, C_2\in\C$ are two objects such that $C_1 \simeq_T C_2$ then the homology groups $(H\circ \mu)(C_1)$ and $(H\circ \mu)(C_2)$ are isomorphic.
\end{cor}

\begin{figure}
  \centering
  \begin{tikzpicture}[
  roundnode/.style={circle, fill=black, minimum size=4pt},
	squarenode/.style={fill=black, minimum size=4pt},
	inner sep=2pt,
	outer sep=1pt
  ]

\draw[green!80!black, line width=1mm, opacity=0.25] (-0.5,0.2)--(0,0.2)--(0,1.2)--(2,1.2)--(4,1.2)--(6,1.2)--(8,1.2)--(8.5,1.2);
\draw[red!80!black  , line width=1mm, opacity=0.25] (-0.5,0.1)--(0,0.1)--(2,0.1)--(2,1.1)--(4,1.1)--(6,1.1)--(8,1.1)--(8.5,1.1);
\draw[green!80!black, line width=1mm, opacity=0.25] (-0.5,0)--(0,0)--(2,0)--(4,0)--(4,1)--(6,1)--(8,1)--(8.5,1);
\draw[red!80!black  , line width=1mm, opacity=0.25] (-0.5,-0.1)--(0,-0.1)--(2,-0.1)--(4,-0.1)--(6,-0.1)--(6,0.9)--(8,0.9)--(8.5,0.9);
\draw[green!80!black, line width=1mm, opacity=0.25] (-0.5,-0.2)--(0,-0.2)--(2,-0.2)--(4,-0.2)--(6,-0.2)--(8,-0.2)--(8,0.8)--(8.5,0.8);

\node (a) at (0, 1) {$(v_0, 1)$};
\node (b) at (2, 1) {$(v_1, 1)$};
\node (c) at (4, 1) {$(v_2, 1)$};
\node (d) at (6, 1) {$(v_3, 1)$};
\node (e) at (8, 1) {$(v_4, 1)$};

\node (a2) at (0, 0) {$(v_0, 0)$};
\node (b2) at (2, 0) {$(v_1, 0)$};
\node (c2) at (4, 0) {$(v_2, 0)$};
\node (d2) at (6, 0) {$(v_3, 0)$};
\node (e2) at (8, 0) {$(v_4, 0)$};

\draw[->] (a)--(b);
\draw[->] (b)--(c);
\draw[->] (c)--(d);
\draw[->] (d)--(e);

\draw[->] (a2)--(b2);
\draw[->] (b2)--(c2);
\draw[->] (c2)--(d2);
\draw[->] (d2)--(e2);

\draw[->] (a2)--(a);
\draw[->] (b2)--(b);
\draw[->] (c2)--(c);
\draw[->] (d2)--(d);
\draw[->] (e2)--(e);

\end{tikzpicture}
   \caption{
  Visualisation of the lift $\mathfrak{L}(v_0 v_1 v_2 v_3 v_4)$.
  The lift is a sum of $5$ different $5$-paths, the green paths are added with coefficient $(+1)$ and the red paths are added with coefficient $(-1)$.
  }\label{fig:lifting}
\end{figure}
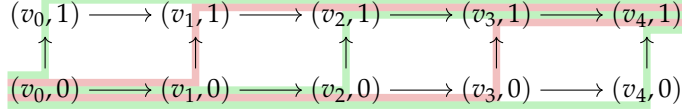

Theorem~\ref{thm:homotopy_yields_chain_homotopy} was shown in~\cite{GRIGORYAN2019106877}, we reproduce the proof here for clarity's seek and to make the chain homotopy explicit in the multi-step case.
Given a regular path complex $P$, the \mdf{lifting map} is a linear map $\mathfrak{L}: \mathcal{R}_k(P) \to \mathcal{R}_{k+1}(\Cyl (P))$, which is given on the standard basis by
\begin{equation}
\mathfrak{L}( v_0 \dots v_k ) \defeq \sum_{i=0}^k 
(-1)^i \cdot (v_0, 0) \dots (v_i, 0) (v_i, 1) \dots (v_k, 1),
\end{equation}
as visualised in Figure~\ref{fig:lifting}.
Note that this clearly restricts to a linear map $\mathfrak{L}: C_k(P) \to C_{k+1}(\Cyl(P))$.
The following result characterises how the lifting map interacts with the regular boundary map.
For a proof, we refer the reader to~\cite{GRIGORYAN2019106877}.

\begin{lemma}[{Product Rule~\cite[Lemma~3.2, Theorem~3.8]{GRIGORYAN2019106877}}]
Given a regular path complex $P$ and $c \in \Omega_k(P)$,
\begin{equation}
\bd \mathfrak{L}(v)  + \mathfrak{L}(\bd c) = \inducedch{(\iota_1)}(v) - \inducedch{(\iota_0)}(c)
\end{equation}
and hence the lifting map restricts to a linear map 
$\mathfrak{L}: \Omega_k(P) \to \Omega_{k+1}(\Cyl (P))$.
\end{lemma}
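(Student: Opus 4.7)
The plan is to first establish the product-rule identity as an equation in $\mathcal{R}_{k+1}(\Cyl(P))$ by a direct computation on a standard basis element $c = v_0 \dots v_k \in \mathcal{R}_k(P)$, and then extend by linearity. Once the identity is in hand, the restriction $\mathfrak{L}:\Omega_k(P)\to\Omega_{k+1}(\Cyl(P))$ will follow from a short consistency check. The technical heart is sign bookkeeping; conceptually, collapse-type terms in $\bd\mathfrak{L}(c)$ cancel across consecutive summands of $\mathfrak{L}(c)$, leaving only two endpoint terms and a copy of $\mathfrak{L}(\bd c)$.

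Expand $\mathfrak{L}(c) = \sum_{i=0}^{k} (-1)^i q_i$ with $q_i = (v_0,0)\dots(v_i,0)(v_i,1)\dots(v_k,1)$. Applying the regular boundary $\bd$ to each $q_i$ produces $k+2$ potential terms which split into three classes: horizontal deletions of some $(v_j,0)$ with $j<i$ or some $(v_j,1)$ with $j>i$; a collapse by deleting $(v_i,0)$; and a collapse by deleting $(v_i,1)$. Direct inspection shows that the $(v_i,1)$-collapse in $q_i$ and the $(v_{i+1},0)$-collapse in $q_{i+1}$ produce the same path with opposite overall signs, hence cancel in telescoping pairs. Only the $(v_0,0)$-collapse of $q_0$ and the $(v_k,1)$-collapse of $q_k$ survive, contributing $\inducedch{(\iota_1)}(c)$ and $-\inducedch{(\iota_0)}(c)$ respectively. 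Reindexing the horizontal remnants by the vertex $v_j$ being deleted (rather than by the summand index $i$) exhibits them as $-\mathfrak{L}(\bd c)$. The one subtle point is that the regular boundary drops irregular paths on both sides consistently: a face $v_0 \dots \hat{v_j} \dots v_k$ is dropped from $\bd c$ precisely when $v_{j-1}=v_{j+1}$, and under exactly this condition every horizontal term that would lift it contains an adjacent pair $(v_{j-1},\epsilon)(v_{j+1},\epsilon)$ with a repeated vertex and is therefore already dropped when computing $\bd q_i$.

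For the restriction, assume $c\in\Omega_k(P)\subseteq C_k(P)$. Each basis summand of $\mathfrak{L}(c)$ has exactly the explicit form appearing in the third clause of the definition of $\Cyl(P)$, so $\mathfrak{L}(c)\in C_{k+1}(\Cyl(P))$. To conclude $\mathfrak{L}(c)\in\Omega_{k+1}(\Cyl(P))$ it suffices to show that $\bd\mathfrak{L}(c)\in C_k(\Cyl(P))$, which by the identity just proved equals $\inducedch{(\iota_1)}(c) - \inducedch{(\iota_0)}(c) - \mathfrak{L}(\bd c)$. The first two summands lie in $C_k(\Cyl(P))$ because $\iota_0, \iota_1:P\to\Cyl(P)$ are weak path morphisms; the third lies in $C_k(\Cyl(P))$ because $\bd c \in \Omega_{k-1}(P)\subseteq C_{k-1}(P)$ and $\mathfrak{L}$ visibly sends $C_{k-1}(P)$ into $C_k(\Cyl(P))$ by the same observation already used. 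The main obstacle in executing this plan is the sign bookkeeping in the first step, in particular verifying that the telescoping cancellation interacts correctly with the passage from $\bd[nreg]$ to $\bd$ via regularisation.
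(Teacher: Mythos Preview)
The paper does not include its own proof of this lemma; it simply states the result and refers the reader to \cite{GRIGORYAN2019106877}. Your proposal is the standard prism-operator computation (telescoping cancellation of the ``collapse'' faces, with the horizontal faces reassembling into $-\mathfrak{L}(\bd c)$), and your treatment of the regularisation subtlety and of the restriction to $\Omega_k$ is correct; this is essentially the argument in the referenced source.
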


\begin{proof}[{Proof of Theorem~\ref{thm:homotopy_yields_chain_homotopy}}]
To begin, we deal with the case where $f\simeq_1 g$.
Suppose $F: \Cyl(P_1) \to P_2$ is a one-step homotopy from $f$ to $g$.
We define the chain homotopy $L_k: \Omega_k(P_1) \to \Omega_{k+1}(P_2)$ by $L_k(c) \defeq \inducedch{F}(\mathfrak{L}(c))$.
Using the fact that $\inducedch{F}$ is a chain map, along with the product rule, given any $c \in \Omega_k(P_1)$,
\begin{align}
(\bd L_k + L_{k-1}\bd)(c)
& = \bd \inducedch{F} \mathfrak{L} (c) + \inducedch{F} \mathfrak{L} ( \bd c ) \\
& = \inducedch{F} \bd \mathfrak{L} (c) + \inducedch{F} \mathfrak{L} ( \bd c ) \\ & = \inducedch{F} ( \bd \mathfrak{L} (c) + \mathfrak{L} ( \bd c ) ) \\ & = \inducedch{F} ( \inducedch{(\iota_1)}(c) - \inducedch{(\iota_0)}(c) ) \\ & = \inducedch{g}(c) - \inducedch{f}(c).
\end{align}
Hence, ${\{L_k\}}_{k\geq 0}$ constitutes a chain homotopy between $\inducedch{f}$ and $\inducedch{g}$, as required.

In the general case,
suppose there is a finite sequence of weak path morphisms
\begin{equation}
f = f_0 , f_1, \dots, f_{m-1}, f_m = g
\end{equation}
such that $f_{i-1} \simeq_1 f_{i}$ for $i= 1, \dots, m$.
For each $i$, there is a one-step homotopy $F_i: \Cyl(P_1) \to P_2$ between $f_i$ and $f_{i+1}$.
If $F_i$ is from $f_i$ to $f_{i+1}$, let $\alpha_i =1$.
If $F_i$ is in the opposite direction, let $\alpha_i = -1$.
By the argument above, $F_i$ induces a chain homotopy
${\{L^{(i)}_k: \Omega_k (P_1) \to \Omega_{k+1}(P_2)\}}_{k\geq 0}$
that satisfies
\begin{equation}
\alpha_i \cdot (\bd L^{(i)}_k + L^{(i)}_{k-1}\bd)(c) = \inducedch{(f_{i+1})}(c) - \inducedch{(f_i)}(c)
\end{equation}
for every $c \in \Omega_k(P_1)$.
Therefore, if we define $L_k \defeq \sum_{i=1}^m  \alpha_i \cdot L^{(i)}_k $, then ${\{L_k\}}_{k\geq 0}$ is a chain homotopy between $\inducedch{f}$ and $\inducedch{g}$, as required.
\end{proof}

\subsection{Ordered simplicial complexes as path complexes}\label{sec:homotopy_theory_osc}

Due to Lemma~\ref{lem:osc_weak_path_iff_tc}, there is an inclusion of full subcategories $\ascat{TcOSC}$, $\iota:\ascat{TcOSC}\hookrightarrow\ascat{WkRPC}$.
Therefore, the system of one-step homotopies described in Section~\ref{sec:homotopy_theory_pathc} restricts to a system for ordered simplicial complexes, by taking the pull-back $\iota^\ast \Sys[\ascat{WkRPC}]$.
In this section, we seek to characterise this system intrinsically within $\ascat{TcOSC}$, i.e.\ without reference to path complexes or morphisms. 
First, we require the notion of the simplicial closure of a path complex.

\begin{defin}
A path complex, $P$, is called \mdf{pre-simplicial} if all of its constituent paths are simplicial.
Given a pre-simplicial path complex $P$, its \mdf{simplicial closure}, $\mdf{\overline{P}}$ is the smallest simplicial complex containing $K$.
\end{defin}

\begin{rem}
\begin{enumerate}
\item A pre-simplicial path complex is necessarily regular.
\item The simplicial closure of a pre-simplicial path complex is well-defined because the set of all simplicial complexes is closed under arbitrary intersections.
\end{enumerate}
\end{rem}

It will be important to understand the structure of $\overline{\Cyl(K)}$, for a simplicial complex $K$, because a homotopy of path complexes has $\Cyl(K)$ as its domain.
First we list the additional simplices in the closure of a cylinder.

\begin{lemma}\label{lem:closure_paths}
Given any simplicial complex $K$, $\Cyl(K)$ is pre-simplicial.
Moreover,
\begin{equation}
\overline{\Cyl(K)}\setminus \Cyl(K) 
= \left\{
(v_0, 0) \dots (v_i, 0)(v_{i+1}, 1)(v_k, 1)
\rmv
v_0 \dots v_k \in K,
0 \leq i < k
\right\}.
\end{equation}
\end{lemma}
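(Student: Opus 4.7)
The plan is to verify the two claims separately: that $\Cyl(K)$ is pre-simplicial, and that the simplicial closure adds exactly the paths described.

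For pre-simpliciality, I would check each of the three forms of paths in $\Cyl(K)$. Forms (1) and (2) have distinct vertices because $v_0 \dots v_k$ is simplicial in $K$ and the assignments $v \mapsto (v, 0)$, $v \mapsto (v, 1)$ are injective. For form (3), the $(v_j, 0)$ with $0 \leq j \leq i$ are pairwise distinct and so are the $(v_j, 1)$ with $i \leq j \leq k$; any vertex with second coordinate $0$ differs from any with second coordinate $1$.

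Let $S$ denote the set on the right-hand side of the claimed equation. I would first check $S \cap \Cyl(K) = \emptyset$: an element of $S$ has vertices of both second coordinates (since $0 \leq i < k$), so is not of form (1) or (2), and its consecutive transition $(v_i, 0)(v_{i+1}, 1)$ has $v_i \neq v_{i+1}$ in $K$, so it is not of form (3) either. Then each $\sigma \in S$ is obtained by removing the vertex $(v_i, 1)$ from the form-(3) path
\begin{equation*}
(v_0, 0) \dots (v_i, 0)(v_i, 1)(v_{i+1}, 1)\dots(v_k, 1) \in \Cyl(K),
\end{equation*}
so $\sigma \in \overline{\Cyl(K)}$. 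To conclude, I would show that $\Cyl(K) \cup S$ is closed under taking ordered subsets, whence minimality of the simplicial closure gives $\overline{\Cyl(K)} \subseteq \Cyl(K) \cup S$.

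Closure under ordered subsets is verified by case analysis on the parent path. Faces of form-(1) or form-(2) paths remain such, using face-closure of $K$. For a form-(3) path, each face that contains vertices of both second coordinates has the shape $(v_{a_0}, 0) \dots (v_{a_p}, 0)(v_{b_0}, 1) \dots (v_{b_r}, 1)$ with $a_p \leq b_0$; the underlying sequence $v_{a_0} \dots v_{a_p} v_{b_0} \dots v_{b_r}$ is a sub-sequence of $v_0 \dots v_k$, hence lies in $K$ by face-closure. If $a_p = b_0$ this is another form-(3) path; otherwise $a_p < b_0$ and (after re-indexing the sequence as $w_0 \dots w_m$) the face lies in $S$. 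Faces retaining only one second coordinate are of form (1) or (2). Faces of paths in $S$ are handled by the same bookkeeping, and land either in form (1), form (2), or $S$. The main obstacle is the careful case analysis for form-(3) faces, ensuring in each subcase that the retained 0-block and 1-block interface at the correct index and that the underlying $v$-subsequence belongs to $K$.
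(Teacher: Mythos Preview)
Your proposal is correct and follows essentially the same strategy as the paper: verify pre-simpliciality directly, exhibit each element of $S$ as a face of a form-(3) cylinder path, and then check that $\Cyl(K)\cup S$ is already a simplicial complex so that minimality of the closure finishes the argument. The paper's proof is terser (it simply asserts the final closure check and uses the parent path obtained by deleting $(v_{i+1},0)$ rather than $(v_i,1)$), but the logical structure is identical.
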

\begin{proof}
It is clear to see that all paths in the cylinder of $K$ are simplicial.
Then, for every $v_0 \dots v_k \in K$ and $0\leq i < k$, $\Cyl(K)$ contains the path
\begin{equation}
(v_0, 0) \dots (v_i, 0)(v_{i+1}, 0)(v_{i+1}, 1) \dots (v_k, 1).
\end{equation}
Therefore, in order to be closed under taking ordered subsets, $\overline{\Cyl(K)}$ must contain the path
\begin{equation}
(v_0, 0) \dots (v_i, 0)(v_{i+1}, 1) \dots (v_k, 1).
\end{equation}
Finally, note that adding these paths to $\Cyl(K)$ yields a simplicial complex.
\end{proof}

With these, we are able to characterise the homotopies between morphisms in $\ascat{TcOSC}$ without reference to path complexes or morphisms.

\begin{theorem}\label{thm:characterise_osc_homot}
Given two triangle-collapsing simplicial morphisms $f,g: K_1 \to K_2$,
a vertex map $F: V(K_1) \times \{0, 1\} \to V(K_2)$ induces a one-step $\Sys[\ascat{WkRPC}]$-homotopy from $f$ to $g$
if and only if $F$ induces a triangle-collapsing simplicial morphism $\overline{\Cyl(K_1)} \to K_2$ such that
\begin{equation}
F \circ \iota_0 = f
\quad\text{and}\quad
F\circ \iota_1 = g.
\end{equation}
\end{theorem}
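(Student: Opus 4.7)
The plan is to use Lemma~\ref{lem:osc_weak_path_iff_tc} as a bridge: a triangle-collapsing simplicial morphism $\overline{\Cyl(K_1)} \to K_2$ is exactly a weak path morphism between these two regular path complexes. Since a one-step $\Sys[\ascat{WkRPC}]$-homotopy from $f$ to $g$ is by definition a weak path morphism $F : \Cyl(K_1) \to K_2$ satisfying $F \circ \iota_0 = f$ and $F \circ \iota_1 = g$, the theorem reduces to the claim that the vertex map $F$ is a weak path morphism $\Cyl(K_1) \to K_2$ if and only if it is a weak path morphism $\overline{\Cyl(K_1)} \to K_2$, with the endpoint conditions being vertex-level statements identical on both sides.

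The $(\Leftarrow)$ direction is immediate since $\Cyl(K_1)$ is a sub--path complex of $\overline{\Cyl(K_1)}$, so any weak path morphism on the latter restricts to one on the former. For $(\Rightarrow)$, I need to check that $F(q)$ is irregular or lies in $K_2$ for every simplex $q$ in the complement $\overline{\Cyl(K_1)} \setminus \Cyl(K_1)$. By Lemma~\ref{lem:closure_paths}, each such $q$ has the form $q = (v_0, 0) \cdots (v_i, 0)(v_{i+1}, 1) \cdots (v_k, 1)$ with $v_0 \cdots v_k \in K_1$ and $0 \leq i < k$, and I will proceed by strong induction on the split index $i$.

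The key object in the induction is the lifted path $p = (v_0, 0) \cdots (v_i, 0)(v_i, 1)(v_{i+1}, 1) \cdots (v_k, 1) \in \Cyl(K_1)$, for which $F(q)$ is obtained from $F(p)$ by deleting the vertex $g(v_i)$ at position $i+1$. By hypothesis $F(p)$ is irregular or lies in $K_2$; in the latter case $F(q) \in K_2$ as a face of $F(p)$. If $F(p)$ has any consecutive equality at a non-join position, namely $f(v_j) = f(v_{j+1})$ for some $j < i$ or $g(v_j) = g(v_{j+1})$ for some $j \geq i+1$, then the same equality shows $F(q)$ is irregular at the corresponding position.

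The delicate case is when all consecutive equalities of $F(p)$ occur at the join, so at least one of $f(v_i) = g(v_i)$ (call this P1) or $g(v_i) = g(v_{i+1})$ (P2) holds. If P2 holds, I rewrite $F(q) = F(\tilde{q})$ for the cylinder path $\tilde{q} = (v_0, 0) \cdots (v_i, 0)(v_i, 1)(v_{i+2}, 1) \cdots (v_k, 1) \in \Cyl(K_1)$, whose underlying simplex $v_0 \cdots \hat{v}_{i+1} \cdots v_k$ is a face of $v_0 \cdots v_k$ in $K_1$, and the hypothesis gives the conclusion directly. If only P1 holds, I rewrite $F(q) = F(\tilde{q}')$ with $\tilde{q}' = (v_0, 0) \cdots (v_{i-1}, 0)(v_i, 1) \cdots (v_k, 1)$: when $i = 0$, this cylinder path lies in $\Cyl(K_1)$ and $F(\tilde{q}') = g(v_0 \cdots v_k)$, which is irregular or in $K_2$ by the triangle-collapsing property of $g$ via Lemma~\ref{lem:osc_weak_path_iff_tc}; when $i \geq 1$, $\tilde{q}'$ is a closure path with split index $i-1$ and the inductive hypothesis applies. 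The main obstacle will be this join-case bookkeeping — in particular, ensuring the rewriting via P1 always lands in either $\Cyl(K_1)$ or a closure path of strictly smaller split index, so that the induction terminates.
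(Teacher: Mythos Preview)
Your proof is correct and follows essentially the same approach as the paper's: reduce via Lemma~\ref{lem:osc_weak_path_iff_tc}, handle $(\Leftarrow)$ by restriction, and for $(\Rightarrow)$ compare each closure simplex $q$ to an auxiliary cylinder path and case-split on where the irregularity lies. The only difference is a mirror symmetry in the bookkeeping: the paper inserts $(v_{i+1},0)$ as the extra vertex and iterates the split index \emph{upward} (terminating with $f$ at $i=k-1$), whereas you insert $(v_i,1)$ and induct \emph{downward} on $i$ (terminating with $g$ at $i=0$); both arguments are equally valid and of the same length.
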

\begin{proof}
First, note that it suffices to show $F$ induces a weak path morphism $\Cyl(K_1) \to K_2$ if and only if $F$ induces a triangle-collapsing simplicial morphism $\overline{\Cyl(K_1)} \to K_2$, under the assumption that $F\circ \iota_0 = f$ and $F \circ \iota_1 = g$.
Also note that $f$ and $g$ are weak path morphisms $K_1 \to K_2$, by Lemma~\ref{lem:osc_weak_path_iff_tc}.

One direction is straight-forward: if $F$ induces a triangle-collapsing simplicial morphism $\overline{\Cyl(K_1)} \to K_2$ then, by Lemma~\ref{lem:osc_weak_path_iff_tc}, $F$ induces a weak path morphism $\overline{\Cyl(K_1)} \to K_2$.
Then, because $\Cyl(K_1) \subseteq \overline{\Cyl(K_1)}$, $F$ must also induce a weak path morphism $\Cyl(K_1) \to K_2$.

For the opposite direction, suppose $F$ induces a weak path morphism $\Cyl(K_1) \to K_2$.
It remains to show that for $p\in\overline{\Cyl(K)}\setminus \Cyl(K)$, either $F(p) \in K_2$ or $F(p)$ is irregular.
By Lemma~\ref{lem:closure_paths}, we can assume that $p$ takes the form
\begin{equation}
(v_0, 0) \dots (v_i, 0) (v_{i+1}, 1) \dots (v_k, 1)
\end{equation}
where $v_0 \dots v_k \in K_1$ and $0\leq i < k$.
Consider the cylinder path
\begin{equation}
p_0 \defeq (v_0, 0) \dots (v_i, 0) (v_{i+1}, 0) (v_{i+1}, 1) \dots (v_k, 1) \in \Cyl(K_1).
\end{equation}
Since $F$ induces a weak path morphism $\Cyl(K_1) \to K_2$, either $F(p_0)\in K_2$ or $F(p_0)$ is irregular.
If $F(p_0)\in K_2$ then we are done because $p$ is a face of $p_0$ and $K_2$ is a simplicial complex.
We deal with the remaining case, in which $F(p_0)$ is irregular.
Consider the possible pairs of adjacent vertices in $p_0$ that have the same image under $F$.
In most cases, this irregularity implies that $F(p)$ is also irregular.
The remaining cases are $F(v_i, 0) = F(v_{i+1}, 0)$ and $F(v_{i+1}, 0) = F(v_{i+1}, 1)$.

\textbf{Case 1:} Suppose $F(v_i, 0) = F(v_{i+1}, 0)$.
Then
\begin{align}
F(p)
&= F\big(
(v_0, 0) \dots (v_{i-1}, 0)(v_{i}, 0)(v_{i+1}, 1)(v_{i+2}, 1) \dots (v_k, 1)
\big) \\
 &= F\big(
(v_0, 0) \dots (v_{i-1}, 0)(v_{i+1}, 0)(v_{i+1}, 1)(v_{i+2}, 1) \dots (v_k, 1)
\big)
\in F(\Cyl(K))
\end{align}
and hence either $F(p)\in K_2$ or $F(p)$ is irregular.

\textbf{Case 2:} Suppose $F(v_{i+1}, 0) = F(v_{i+1}, 1)$.
Then
\begin{align}
F(p)
&= F\big(
(v_0, 0) \dots (v_{i-1}, 0)(v_{i}, 0)(v_{i+1}, 1)(v_{i+2}, 1) \dots (v_k, 1)
\big) \\
 &= F\big(
(v_0, 0) \dots (v_{i-1}, 0)(v_{i}, 0)(v_{i+1}, 0)(v_{i+2}, 1) \dots (v_k, 1)
\big).
\end{align}
Iterating this argument, we either fall into a Case 1 (in which case $F(p)\in K_2$ or $F(p)$ is irregular),
or we arrive at Case 2 with $i=k-1$.
This yields $F(v_k, 0) = F(v_k, 1)$ and
\begin{align}
F(p)
&= F\big(
(v_0, 0) \dots (v_{i-1}, 0)(v_{i}, 0)(v_{i+1}, 1)(v_{i+2}, 1) \dots (v_k, 1)
\big) \\
 &= F\big(
(v_0, 0) \dots  (v_{k-1}, 0) (v_k, 1)
\big).
\end{align}
From this we can conclude
\begin{equation}
F(p) = F\big(
  (v_0, 0) \dots  (v_{k-1}, 0) (v_k, 0)
\big)
 = (F \circ \iota_0)(v_0 \dots v_k).
\end{equation}
Then, since $F \circ \iota_0$ is a weak path morphism $K_1 \to K_2$, we obtain that either $F(p)\in K_2$ 
or $F(p)$ is irregular.
\end{proof}

The map $K \mapsto \overline{\Cyl(K)}$ is a functor $\overline{\Cyl}:\ascat{TcOSC}\to\ascat{TcOSC}$.
Together with the usual natural transformations $\iota_i$ and $\rho$, this determines a cylinder functor for $\ascat{TcOSC}$.
We denote the resulting system of one-step homotopies
  by \mdf{$\Sys[\ascat{TcOSC}]$}.
To be explicit, in this system,
a one-step $\Sys[\ascat{TcOSC}]$-homotopy from $f$ to $g$ is a triangle-collapsing simplicial morphism
$F:\overline{\Cyl(K_1)} \to K_2$ such that
\begin{equation}
F \circ \iota_0 = f
\quad\text{and}\quad
F \circ \iota_0 = g.
\end{equation}
Theorem~\ref{thm:characterise_osc_homot} tells us that this is an intrinsic description of the pull-back system .

\begin{cor}\label{cor:pullback_TcOSC_WkRPC}
The pull-back system along the inclusion $\iota: \ascat{TcOSC} \hookrightarrow \ascat{WkRPC}$ is
$\iota^\ast \Sys[\ascat{WkRPC}] = \Sys[\ascat{TcOSC}]$.
\end{cor}

We conclude this subsection with some simple technical results, which control the paths in $\overline{\Cyl(K)}$.
These allow us to obtain an equivalent condition to $F$ inducing a triangle-collapsing simplicial morphism $\overline{\Cyl(K_1)} \to K_2$, which depends only on the 1-skelton, $\Sk_1(K_1)$.
This will be useful in the later sections.

\begin{lemma}\label{lem:closure_results}
Let $K$ be a simplicial complex.
\begin{enumerate}
\item If $(v_0, j) \dots (v_k, j) \in \overline{\Cyl(K)}$ for some fixed $j$ then $v_0 \dots v_k \in K$.\label{itm:closure_result_1}
\item If $(v_0, j_0) \dots (v_k, j_k) \in \overline{\Cyl(K)}$ and $v_0 \dots v_k$ is simplicial then $v_0 \dots v_k \in K$.\label{itm:closure_result_2}
\item If $(v_0, j_0) \dots (v_k, j_k)\in\overline{\Cyl(K)}$ and $v_i = v_j$ for $j> i$ then $j= i+1$.\label{itm:closure_result_3}
\item If $(v_0, j_0)(v_1, j_1)(v_2, j_2)\in\overline{\Cyl(K)}$ then $v_0 \neq v_2$.\label{itm:closure_result_4}
\end{enumerate}
\end{lemma}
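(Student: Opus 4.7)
The plan is to read off each claim directly from Lemma~\ref{lem:closure_paths}, which enumerates the possible paths in $\overline{\Cyl(K)}$. That lemma tells us every path in $\overline{\Cyl(K)}$ falls into one of three forms (after renaming indices to avoid collision with the hypothesis):
\begin{enumerate}
\item a pure-level type~1 or type~2 path $(u_0, j) \dots (u_n, j)$ with $u_0 \dots u_n \in K$ and $j \in \{0,1\}$;
\item a type~3 cylinder path $(u_0, 0) \dots (u_m, 0)(u_m, 1) \dots (u_n, 1)$ with $u_0 \dots u_n \in K$;
\item a path $(u_0, 0) \dots (u_m, 0)(u_{m+1}, 1) \dots (u_n, 1)$ with $u_0 \dots u_n \in K$ and $0 \leq m < n$, coming from $\overline{\Cyl(K)}\setminus \Cyl(K)$.
\end{enumerate}
In each case the first-coordinate sequence and its regularity structure are pinned down, so all four items reduce to matching the hypothesis against this list.

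For \ref{itm:closure_result_1}, a pure-level path can only come from case (i), since cases (ii) and (iii) both necessarily involve at least one vertex with second coordinate $0$ and at least one with second coordinate $1$. In case (i), the cylinder definition directly gives $v_0 \dots v_k = u_0 \dots u_n \in K$.

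For \ref{itm:closure_result_2}, I would rule out case (ii) by observing that its first-coordinate sequence is $u_0 \dots u_m u_m u_{m+1} \dots u_n$, which is irregular and hence not simplicial. In case (i) apply \ref{itm:closure_result_1}, and in case (iii) the first-coordinate sequence is exactly $u_0 \dots u_n \in K$. For \ref{itm:closure_result_3}, I would note that in cases (i) and (iii) the first-coordinate sequence is a simplex of $K$ and so has all distinct vertices, forcing $v_i \neq v_j$; only case (ii) produces a repeated first coordinate, and the two occurrences of $u_m$ are at the adjacent positions $m$ and $m+1$, giving $j = i+1$. Finally, \ref{itm:closure_result_4} is just \ref{itm:closure_result_3} applied with the indices $i=0$, $j=2$: if $v_0 = v_2$ then \ref{itm:closure_result_3} would force $2 = 0 + 1$, which is absurd.

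There is no real obstacle beyond careful index bookkeeping to distinguish the $(k+1)$-path length appearing in the hypothesis from the $(n+1)$ or $(n+2)$ path lengths appearing in the three forms of Lemma~\ref{lem:closure_paths}; once the renaming is fixed the proof is a short case analysis.
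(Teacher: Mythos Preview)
Your proposal is correct and follows essentially the same approach as the paper: both proofs enumerate the four types of paths in $\overline{\Cyl(K)}$ (the three cylinder types plus the extra simplices from Lemma~\ref{lem:closure_paths}) and then match each item's hypothesis against this list by inspecting the second-coordinate pattern and the regularity of the first-coordinate sequence. The only cosmetic difference is that the paper organises the case split slightly differently (first deciding whether the path lies in $\Cyl(K)$ or in $\overline{\Cyl(K)}\setminus\Cyl(K)$), but the content is identical.
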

\begin{proof}
For the first point, suppose $p_1\defeq (v_0, j) \dots (v_k, j) \in \overline{\Cyl(K)}$ for some fixed $j$.
Note that $p_1$ is not one of the simplices listed in Lemma~\ref{lem:closure_paths}, because the second coordinate is constant.
Therefore, we have $p_1\in\Cyl(K)$.
Considering the three types of paths in $\Cyl(K)$ we see that $v_0 \dots v_k \in K$.

For the second point, suppose
$p_2 \defeq (v_0, j_0) \dots (v_k, j_k) \in \overline{\Cyl(K)}$
and $v_0 \dots v_k$ is simplicial.
If the $j_i$ are constant at $0$ or $1$ then $p_2$ is of the form above and hence $v_0 \dots v_k \in K$.
Otherwise, note that we must have $p_2\in\overline{\Cyl(K)}\setminus\Cyl(K)$ because the remaining paths in $\Cyl(K)$ repeat vertices in the first coordinate.
Hence, by Lemma~\ref{lem:closure_paths}, we see that $v_0 \dots v_k \in K$.

For the third point, suppose
$p_3 \defeq (v_0, j_0) \dots (v_k, j_k)\in \overline{\Cyl(K)}$.
If $v_i = v_j$ for some $i < j$ then $v_0 \dots v_k$ is not simplicial.
Considering the four types of paths in $\overline{\Cyl(K)}$, we see that $p_3$ must take the form
\begin{equation}
p_3 = (w_0, 0) \dots (w_i, 0)(w_i, 1)(w_{i+1}, 1) \dots (w_{k-1}, 1)
\end{equation}
for some $w_0 \dots w_{k-1} \in K$.
Since $w_0 \dots w_{k-1}$ is simplicial, the only repeated vertex is $w_i$, and hence we must have $j=i+1$.
The final point follows immediately from the third.
\end{proof}

\begin{lemma}\label{lem:osc_tpc_equiv_condition}
Given two simplicial complexes $K_1$, $K_2$ and a weak simplicial morphism $F:\overline{\Cyl(K_1)} \to K_2$,
denote $f\defeq F \circ \iota_0$ and $g \defeq F \circ \iota_1$.
If both $f$ and $g$ are both triangle-collapsing simplicial morphisms $K_1 \to K_2$,
then $F$ is triangle collapsing 
if and only if
\begin{equation}\label{eq:osc_tpc_equiv_condition}
xy \in K_1 \text{ and } f(x) = g(y)
\quad\implies\quad
f(x) = f(y) = g(x) = g(y).
\end{equation}
\end{lemma}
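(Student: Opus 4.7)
The plan is to prove the two directions separately, using Lemma~\ref{lem:closure_paths} to enumerate the $2$-simplices of $\overline{\Cyl(K_1)}$ and Lemma~\ref{lem:closure_results} to rule out degenerate configurations. The forward direction is the easier one: assuming $F$ is triangle-collapsing, I would take any edge $xy \in K_1$ (so $x \neq y$) with $f(x) = g(y)$ and test $F$ on the two obvious cylinder triangles $(x,0)(y,0)(y,1)$ and $(x,0)(x,1)(y,1)$, both of which lie already in $\Cyl(K_1) \subseteq \overline{\Cyl(K_1)}$. Their outer vertices map to $f(x)$ and $g(y)$ respectively, so the triangle-collapsing property of $F$ forces $f(y) = f(x) = g(y)$ from the first triangle and $g(x) = f(x) = g(y)$ from the second, yielding the four-way equality.

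For the reverse direction, assume condition~(\ref{eq:osc_tpc_equiv_condition}) and take an arbitrary $2$-simplex $p_0 p_1 p_2 \in \overline{\Cyl(K_1)}$ with $F(p_0) = F(p_2)$; write $p_i = (v_i, j_i)$. By Lemma~\ref{lem:closure_results}(\ref{itm:closure_result_4}), $v_0 \neq v_2$, and Lemma~\ref{lem:closure_paths} restricts $p_0 p_1 p_2$ to six shapes that I would organise into three groups. First, when $j_0 = j_1 = j_2$, Lemma~\ref{lem:closure_results}(\ref{itm:closure_result_1}) gives $v_0 v_1 v_2 \in K_1$ and the conclusion follows directly from the triangle-collapsing hypothesis on $f$ (resp.\ $g$). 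Second, the ``strip'' triangles $(v_0,0)(v_0,1)(v_1,1)$ and $(v_0,0)(v_1,0)(v_1,1)$ sit over an edge $v_0 v_1 \in K_1$ with outer $F$-images $f(v_0)$ and $g(v_1)$; applying condition~(\ref{eq:osc_tpc_equiv_condition}) directly collapses the middle vertex's image.

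The genuinely interesting cases are the two triangles $(v_0,0)(v_1,1)(v_2,1)$ and $(v_0,0)(v_1,0)(v_2,1)$ living over a simplex $v_0 v_1 v_2 \in K_1$, and these will be the main obstacle because condition~(\ref{eq:osc_tpc_equiv_condition}) never directly mentions $v_1$. The trick I would use is a two-step argument: first apply condition~(\ref{eq:osc_tpc_equiv_condition}) to the face $v_0 v_2 \in K_1$ (which is a simplex of $K_1$ since $K_1$ is closed under ordered subsets) to obtain $f(v_0) = f(v_2) = g(v_0) = g(v_2)$, and then invoke the triangle-collapsing property of $f$ (or $g$) on the genuine $K_1$-simplex $v_0 v_1 v_2$ to conclude $f(v_1) = f(v_0)$ and $g(v_1) = g(v_0)$. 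In either of the remaining shapes this identifies $F(p_1)$ with $F(p_0) = F(p_2)$, completing the triangle-collapsing condition for $F$.
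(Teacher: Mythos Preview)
Your proof is correct and follows essentially the same approach as the paper's. The forward direction is identical. In the reverse direction, the paper organises the case analysis slightly differently---it does not explicitly enumerate the six shapes via Lemma~\ref{lem:closure_paths} but instead reduces to $p=(v_0,0)(v_1,j_1)(v_2,1)$, assumes $j_1=0$ without loss of generality, and then splits on whether $v_1=v_2$ (your strip case) or $v_1\neq v_2$ (your closure case), using Lemma~\ref{lem:closure_results}(\ref{itm:closure_result_1},\ref{itm:closure_result_2}) to recover the underlying $K_1$-simplex; your enumeration via Lemma~\ref{lem:closure_paths} gives that simplex for free, which is marginally cleaner, but the substantive two-step argument (apply condition~(\ref{eq:osc_tpc_equiv_condition}) to $v_0v_2$, then use the triangle-collapsing hypothesis on $f$ or $g$) is the same in both.
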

\begin{proof}
First, suppose $F$ is triangle-collapsing.
Then take $xy \in K_1$ such that $f(x) = g(y)$.
This implies that $F(x, 0) = F(y, 1)$.
Now note $(x, 0)(y, 0)(y, 1)\in\overline{\Cyl(K_1)}$
so $F(y, 0)=F(x, 0)$ and hence $f(y) = f(x)$.
Likewise, $(x, 0)(x, 1)(y, 1)\in\overline{\Cyl(K_1)}$
so $F(x, 1)=F(x, 0)$ and hence $g(x) = f(x)$.

Now suppose $F$ satisfies condition (\ref{eq:osc_tpc_equiv_condition}).
Take a simplex $p=(v_0, j_0)(v_1, j_1)(v_2, j_2)\in\overline{\Cyl(K_1)}$ such that $F(v_0, j_0) = F(v_2, j_2)$.
By {Lemma~\ref{lem:closure_results}.\ref{itm:closure_result_4}}, we may assume $v_0 \neq v_2$.
If $p = \iota_j(p')$ for some $j \in \{0, 1\}$ and $p' \in K_1$ then $F(v_1, j_1) = F(v_0, j_0)$ because $f$ and $g$ are both triangle-collapsing.
It remains to consider the case where
$p = (v_0, 0) (v_1, j_1) (v_2, 1)$.
Let us assume that $j_1 = 0$; the $j_1=1$ case admits a similar proof.
First, note the condition $F(v_0, j_0) = F(v_2, j_2)$ becomes $f(v_0) = g(v_2)$.
Since $j_1 = 0$ and $p$ is simplicial, we must have $v_1 \neq v_0$.
We split into two cases.

\textbf{Case 1:} Suppose $v_1 = v_2$.
Then $(v_0, 0)(v_1, j_1) = (v_0, 0)(v_2, 0)$ is a face of $p\in\overline{\Cyl(K_1)}$, so
we must have $(v_0, 0)(v_2, 0)\in\overline{\Cyl(K_1)}$.
Therefore, by {Lemma~\ref{lem:closure_results}.\ref{itm:closure_result_1}}, we see $v_0 v_2\in K_1$
and then
condition (\ref{eq:osc_tpc_equiv_condition}) implies that
\begin{equation}
F(v_1, j_1) = f(v_1) = f(v_2) = f(v_0) = F(v_0, j_0).
\end{equation}

\textbf{Case 2:} Suppose $v_1 \neq v_2$.
In this case note that $v_0 v_1 v_2$ is simplicial and hence, by {Lemma~\ref{lem:closure_results}.\ref{itm:closure_result_2}}, $v_0 v_1 v_2 \in K_1$.
Then, since $K_1$ is a simplicial complex, $v_0 v_2 \in K_1$ and hence condition (\ref{eq:osc_tpc_equiv_condition}) implies $g(v_2) = f(v_2)$.
Now $f$ is a triangle-collapsing simplicial morphism and $f(v_0) = g(v_2) = f(v_2)$ so $f(v_1) = f(v_0)$ which implies $F(v_1, j_1) = F(v_0, 0)$.
\end{proof}

\subsection{Ordered simplicial complexes as simplicial sets}\label{sec:sset}

Those unfamiliar with path complexes may prefer to view an ordered simplicial complex as a simplicial set.
For completeness, we consider this alternative perspective here.
The remainder of this work is not dependent on this section.

First, we describe a full and faithful embedding $\sigma:\ascat{TcOSC} \to \ascat{sSet}$, into the category of simplicial sets.
To each simplicial set, one can assign a chain complex, called its normalised Moore complex, via a functor $N:\ascat{sSet} \to \ascat{Ch}$ (see~\cite[\S~2]{Goerss2009Chapter3}).
From a homological point of view, these two perspectives are identical, in the sense that $\Omega$ and $N \circ \sigma$ are naturally isomorphic as functors $\ascat{TcOSC} \to \ascat{Ch}$.
The category of simplicial sets is equipped with a system of one-step homotopies, via its `left homotopies', which we denote $\Sys[\ascat{sSet}]$.
In Corollary~\ref{cor:sset_homotopies_same}, we show that $\sigma^\ast \Sys[\ascat{sSet}] = \Sys[\ascat{TcOSC}]$ and hence these two perspectives yield identical systems of one-step homotopies.
We begin with definitions of the simplex category and of simplicial sets, which facilitate the construction of $\sigma$.

\begin{defin}
Given an integer $n\geq 0$, the \mdf{$n$\textsuperscript{th} ordinal} is the ordered simplicial complex, \mdf{$[n]$}, on the vertex set $\{0, \dots, n\}$ consisting of \emph{all} strictly increasing sequences.
Put another way, $[n]$ is the minimal simplicial complex containing the simplex $0 \dots n$.
The \mdf{simplex category}, \mdf{$\SmplxCat$}, is the full subcategory of $\ascat{TcOSC}$, with objects restricted to the ordinals.
\end{defin}

\begin{rem}
Let $G_n$ be the digraph on $\{0, \dots, n\}$ with an edge $i \to j$ if and only if $i < j$.
Then notice $\mathcal{A}(G_n) = \dFl(G_n) = [n]$.
By Lemmas~\ref{lem:osc_weak_path_iff_tc} and~\ref{lem:dig_map_iff_path_morph}, a morphism $[m] \to [n]$ in $\SmplxCat$ is just a weak digraph map $G_m \to G_n$.
This is equivalent to a (not necessarily strictly) increasing function $\{0, \dots, m\} \to \{0, \dots, n\}$.

\end{rem}

\begin{defin}
A \mdf{simplicial set} is a contravariant functor from the simplex category into the category of sets, i.e.\ a functor $\opcat{\SmplxCat} \to \ascat{Set}$.
A \mdf{simplicial map} is a morphism in the functor category $\Funct{\opcat{\SmplxCat}}{\ascat{Set}}$, i.e.\ a natural transformation.
This defines the \mdf{category of simplicial sets}, which we denote \mdf{$\ascat{sSet}$}.
\end{defin}

\begin{lemma}\label{lem:sset_embed}
There is a full and faithful embedding $\sigma: \ascat{TcOSC} \to \ascat{sSet}$.
\end{lemma}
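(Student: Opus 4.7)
The plan is to define $\sigma$ via the restricted Yoneda embedding along the inclusion $\SmplxCat \hookrightarrow \ascat{TcOSC}$. On objects, set
\begin{equation}
\sigma(K)_n \defeq \MorXY{\ascat{TcOSC}}{[n]}{K},
\end{equation}
with the contravariant structure given by precomposition: for $\theta:[m]\to [n]$ in $\SmplxCat$ and $\tau\in\sigma(K)_n$, let $\sigma(K)(\theta)(\tau)\defeq \tau\circ\theta$. On morphisms, given $f:K_1\to K_2$ in $\ascat{TcOSC}$, define $\sigma(f)_n(\tau)\defeq f\circ\tau$; this is natural in $[n]$, so $\sigma(f)$ is a simplicial map. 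Functoriality of $\sigma$ is automatic since everything is defined by composition.

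For faithfulness, observe that $\sigma(K)_0 = \MorXY{\ascat{TcOSC}}{[0]}{K}$ is in natural bijection with $V(K)$ (a morphism $[0]\to K$ just picks a vertex). Under this identification, $\sigma(f)_0$ is exactly the underlying vertex map of $f$. Since a triangle-collapsing simplicial morphism is determined by its action on vertices, $\sigma(f) = \sigma(g)$ implies $f = g$.

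For fullness, the main step, take a simplicial map $\mu:\sigma(K_1)\to\sigma(K_2)$ and define $f\defeq \mu_0: V(K_1)\to V(K_2)$. The goal is to show $f$ is triangle-collapsing simplicial and that $\sigma(f)=\mu$. The key observation is that for each simplex $p=v_0\dots v_k\in K_1$, the vertex map $i\mapsto v_i$ determines a morphism $\tau_p:[k]\to K_1$ in $\ascat{TcOSC}$, and naturality of $\mu$ with respect to the vertex inclusions $\delta_i:[0]\to [k]$, $0\mapsto i$, forces
\begin{equation}
\mu_k(\tau_p)\circ \delta_i = \mu_0(\tau_p\circ \delta_i)= \mu_0(v_i) = f(v_i).
\end{equation}
Thus $\mu_k(\tau_p)$ is the vertex map $i\mapsto f(v_i)$, and because $\mu_k(\tau_p)\in\sigma(K_2)_k$ it must be a triangle-collapsing simplicial morphism $[k]\to K_2$. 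Applied to the top simplex $0\dots k\in [k]$, this tells us that $f(v_0)\dots f(v_k)$ is either non-simplicial or belongs to $K_2$; hence $f$ is weak simplicial. The triangle-collapsing condition for $f$ is the $k=2$ case: if $v_0 v_1 v_2\in K_1$ with $f(v_0)=f(v_2)$, then the morphism $\mu_2(\tau_p):[2]\to K_2$ has $0$ and $2$ mapping to the same vertex, so by its own triangle-collapsing property $f(v_1) = f(v_0)$. Hence $f\in\MorXY{\ascat{TcOSC}}{K_1}{K_2}$.

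Finally, $\sigma(f)=\mu$ follows from the same naturality argument applied to arbitrary $\tau:[n]\to K_1$: the vertex $\mu_n(\tau)(i)$ equals $\mu_0(\tau(i)) = f(\tau(i)) = (f\circ\tau)(i)$, so $\mu_n(\tau) = f\circ\tau = \sigma(f)_n(\tau)$. The only subtlety I anticipate is being careful that the morphism $\tau_p$ associated to a simplex $p$ really lands in $\ascat{TcOSC}$ (it is a strictly increasing vertex map into $K_1$, hence a strong simplicial morphism, and in particular triangle-collapsing), and that the naturality square used throughout is with respect to morphisms in $\SmplxCat$ — which, as remarked after the definition of $\SmplxCat$, are exactly the weakly monotone maps, so the vertex inclusions $\delta_i$ indeed belong to $\SmplxCat$.
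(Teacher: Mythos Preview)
Your proposal is correct and follows essentially the same route as the paper: define $\sigma$ as the restricted Yoneda embedding along $\SmplxCat\hookrightarrow\ascat{TcOSC}$, recover the vertex map from $\mu_0$, and use naturality to verify that this map is a triangle-collapsing simplicial morphism with $\sigma(f)=\mu$. The paper's own proof leaves these last verifications at the level of a sketch (``which induces a triangle-collapsing simplicial morphism precisely because $\eta$ is natural''), whereas you have spelled out the naturality argument with respect to the vertex inclusions $\delta_i:[0]\to[k]$ explicitly; this is a welcome elaboration rather than a different strategy.
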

\begin{proof}
There is an inclusion of subcategories $\iota_{\SmplxCat}:\SmplxCat \to \ascat{TcOSC}$.
We define $\sigma$ to be the restricted Yoneda embedding corresponding to this inclusion, i.e.\
$ \sigma \defeq \iota_{\SmplxCat}^{\ast} \circ \yo$,
where $\iota_{\SmplxCat}^{\ast}$ is the restriction functor
$\iota_{\SmplxCat}^{\ast}: \Funct{\opcat{\ascat{TcOSC}}}{\ascat{Set}} \to \Funct{\opcat{\SmplxCat}}{\ascat{Set}}$
and $\yo: \ascat{TcOSC} \to \Funct{\opcat{\ascat{TcOSC}}}{\ascat{Set}}$ is the Yoneda embedding.
More concretely, given an ordinal $[n] \in \SmplxCat$,
\begin{equation}
\sigma(K)([n]) = \MorXY{\ascat{TcOSC}}{[n]}{K}.
\end{equation}

The proof that $\sigma$ is full and faithful follows the usual proof that $\SmplxCat$ is dense in the category of small categories.
Essentially, this amounts to showing that,
given simplicial complexes $K_1, K_2$ and a natural transformation $\eta: \sigma(K_1) \Rightarrow \sigma(K_2)$, there is a unique vertex map $f: V(K_1) \to V(K_2)$ such that $f$ induces a triangle-collapsing simplicial morphism $K_1 \to K_2$ and $\sigma(f) = \eta$.
This vertex map is fully determined by the function
\begin{equation}
\eta_{[0]}: \MorXY{\ascat{TcOSC}}{[0]}{K_1} \to \MorXY{\ascat{TcOSC}}{[0]}{K_2},
\end{equation}
which induces to a triangle-collapsing simplicial morphism precisely because $\eta$ is natural.
\end{proof}

We now recall the homotopies in $\ascat{sSet}$ and explore how they relate to the homotopies in $\ascat{TcOSC}$.
First, we need the notion of product for a general category.

\begin{defin}
Given two objects $G, H\in\C$ in some category $\C$, the \mdf{categorical product} is an object \mdf{$G\times_{\C}H$} together with morphisms $\pi_G: G\times_{\C}H \to G$ and $\pi_H : G\times_{\C}H \to H$ satisfying the following universal property:

\textbf{(Universal Property of Products)} Given $\phi_G: Y \to G$ and $\phi_H: Y \to H$, there is a unique morphism $f: Y \to G \times_{\C} H$ such that the following diagram commutes
\begin{equation}
\begin{tikzcd}
 & Y \arrow[dl, "\phi_G"'] \arrow[dr, "\phi_H"] \arrow[d, dashed, "f"] & \\
G & G\times_{\C}H \arrow[l, "\pi_G"] \arrow[r, "\pi_H"'] & H
\end{tikzcd}
\end{equation}
\end{defin}

\begin{rem}
The categorical product may not exist but, if it does, it is unique up to unique isomorphism.
\end{rem}

\begin{defin}
Given a simplicial set $S\in\ascat{sSet}$, the \mdf{cylinder over $S$} is the simplicial set
\begin{equation}
\mdf{\Cyl(S)} \defeq S \times_{\ascat{sSet}} \sigma([1]).
\end{equation}
\end{defin}

This construction can clearly be made into a functor $\Cyl: \ascat{sSet} \to \ascat{sSet}$.
Since $\ascat{sSet}$ is a category of functors into $\ascat{Set}$ which is complete, this product exists and is computed `pointwise'.
That is,
given an ordinal $[n]\in\SmplxCat$,
\begin{equation}
\Cyl(S)[n] = S[n] \times \sigma([1]) = S[n] \times \MorXY{\ascat{\SmplxCat}}{[n]}{[1]},
\end{equation}
where $\times$ here denotes the usual Cartesian product of sets.
This formula also extends to describe the structure morphisms, $\Cyl(S)(\theta)$, for each $\theta:[m] \to [n]$ in $\SmplxCat$.
There are natural transformations
$e_0, e_1: \id_{\ascat{sSet}} \Rightarrow \Cyl$
and
$\rho: \Cyl \Rightarrow \id_{\ascat{sSet}}$.
On each component $S\in\ascat{sSet}$, these are given on the ordinals $[n]\in\SmplxCat$ by
\begin{equation}
{(e_i)}_{S}[n] : S[n] \to \Cyl(S)[n], \quad \tau \mapsto (\tau, c_i)
\end{equation}
and
\begin{equation}
(\rho)_{S}[n] : \Cyl(S)[n] \to S[n], \quad (\tau, i) \mapsto \tau,
\end{equation}
where
$c_i\in\MorXY{\ascat{TcOSC}}{[n]}{[1]}$ denotes the constant morphism which sends all vertices to $i$.
These natural transformations compose as follows $\rho \circ e_o = \rho \circ e_1 = \id_{\ascat{sSet}}$ which
gives $\Cyl$ the structure of a cylinder functor.
We denote the associated system of one-step homotopies by $\Sys[\ascat{sSet}]$.
Again, to be explicit, given simplicial maps $f, g: S_1 \to S_2$ a one-step $\Sys[\ascat{sSet}]$-homotopy from $f$ to $g$ is a simplicial map
$F: \Cyl(S_1) \to S_2$ such that
\begin{equation}
F \circ e_0 = f
\quad\text{and}\quad
F \circ e_1 = g.
\end{equation}
We refer the interested reader to~\cite[\S~1.6]{Goerss2009Chapter1} for a more thorough treatment of simplicial homotopies.
As with $\Sys[\ascat{WkRPC}]$, homotopic simplicial maps induce identical maps on homology.

\begin{theorem}[{\cite[Theorem~2.4]{Goerss2009Chapter3}}]
Suppose $f, g: S_1 \to S_2$ are $\Sys[\ascat{sSet}]$-homotopic simplicial maps
 then there is a chain homotopy between $N(f)$ and $N(g)$.
Therefore, $f$ and $g$ induce identical maps on homology.
In particular, if $S_1 \simeq S_2$ then the homology groups $(H \circ N)(S_1)$ and $(H \circ N)(S_2)$ are isomorphic.
\end{theorem}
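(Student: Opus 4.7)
The plan is to mimic the proof of Theorem~\ref{thm:path_complex_homot_invariance}, replacing the path-level lifting map $\mathfrak{L}$ with a simplicial ``prism'' operator. First I would construct, for each simplicial set $S$, a natural chain map $P: N_k(S) \to N_{k+1}(\Cyl(S))$. Geometrically, one triangulates $\Delta^k \times \Delta^1$ into $k+1$ top-dimensional simplices; algebraically, this translates to the formula
\begin{equation}
P(\tau) = \sum_{i=0}^k (-1)^i \bigl(s_i(\tau),\; \mu_i\bigr),
\end{equation}
for $\tau \in S[k]$, where $s_i: S[k] \to S[k+1]$ is the $i$-th degeneracy and $\mu_i \in \MorXY{\SmplxCat}{[k+1]}{[1]}$ is the monotone map sending $0,\dots,i$ to $0$ and $i+1,\dots,k+1$ to $1$. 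One must then check that $P$ descends to the normalised Moore complex, which requires verifying that the image of a degenerate simplex lands in the subcomplex generated by degenerate simplices of $\Cyl(S)$.

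Next I would verify the product rule
\begin{equation}
\partial P + P \partial = N(e_1) - N(e_0),
\end{equation}
by direct computation using the simplicial identities. The terms in the alternating sums pair up and cancel in the same pattern as in the path-complex calculation, leaving exactly the two prescribed boundary contributions. Given a one-step $\Sys[\ascat{sSet}]$-homotopy $F: \Cyl(S_1) \to S_2$ from $f$ to $g$, the composition $L \defeq N(F) \circ P$ is then a chain homotopy from $N(f)$ to $N(g)$, since
\begin{equation}
\partial L + L \partial = N(F)\bigl(\partial P + P \partial\bigr) = N(F \circ e_1) - N(F \circ e_0) = N(g) - N(f).
\end{equation}

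For a general multi-step $\Sys[\ascat{sSet}]$-homotopy, realised as a sequence of one-step homotopies $F_1,\dots,F_m$, each $F_i$ yields a chain homotopy $L^{(i)}$; attaching signs $\alpha_i = \pm 1$ according to the direction of $F_i$ (exactly as in the proof of Theorem~\ref{thm:path_complex_homot_invariance}), the sum $L \defeq \sum_i \alpha_i L^{(i)}$ is a chain homotopy from $N(f)$ to $N(g)$. Passing to homology yields $(H \circ N)(f) = (H \circ N)(g)$. The object-level statement is then immediate from functoriality: for a homotopy equivalence realised by maps $f: S_1 \to S_2$ and $g: S_2 \to S_1$, the induced maps $(H \circ N)(f)$ and $(H \circ N)(g)$ are mutually inverse isomorphisms. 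The main obstacle is the bookkeeping required to confirm that $P$ descends to the normalised complex and that the product rule holds on the nose; both are purely combinatorial consequences of the simplicial identities, but careful tracking of signs and degeneracy indices is essential.
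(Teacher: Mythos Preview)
The paper does not supply its own proof of this theorem; it is quoted verbatim as \cite[Theorem~2.4]{Goerss2009Chapter3} and treated as a standard result from the simplicial sets literature. So there is no in-paper argument to compare against.

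Your proposed argument is the standard prism-operator proof and is correct in outline. It also deliberately parallels the proof the paper \emph{does} give for the path-complex analogue (Theorem~\ref{thm:path_complex_homot_invariance}): your operator $P$ plays exactly the role of the lifting map $\mathfrak{L}$, your product rule is the simplicial-set version of the lemma the paper calls the Product Rule, and the multi-step sign bookkeeping is identical. The only extra work on your side is checking that $P$ descends to the normalised Moore complex, which has no counterpart in the path-complex setting because there are no degeneracies there; this is a routine simplicial-identity verification, as you note. So your write-up would in effect supply what the paper chose to outsource, in a style consistent with the rest of the paper.
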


\begin{lemma}\label{lem:sset_cyls}
For any simplicial complex $K$, $\Cyl(\sigma(K)) = \sigma(\overline{\Cyl(K)})$ (up to a natural isomorphism).
Moreover, as natural transformations
$
\sigma \iota_i = e_i \sigma
$.
\end{lemma}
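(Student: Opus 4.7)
The plan is to construct an explicit natural isomorphism $\Psi: \Cyl \circ \sigma \Rightarrow \sigma \circ \overline{\Cyl}$. Since
\[
\Cyl(\sigma(K))[n] = \MorXY{\ascat{TcOSC}}{[n]}{K} \times \MorXY{\ascat{TcOSC}}{[n]}{[1]},
\qquad
\sigma(\overline{\Cyl(K)})[n] = \MorXY{\ascat{TcOSC}}{[n]}{\overline{\Cyl(K)}},
\]
the obvious candidate is $\Psi_{K,[n]}(\tau, c) \defeq (i \mapsto (\tau(i), c(i)))$. I would verify in turn that $\Psi$ is well-defined, bijective, natural in both arguments, and compatible with $\iota_i$ and $e_i$.

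For well-definedness, fix $\tau$ and monotone $c$ and set $\phi(i) \defeq (\tau(i), c(i))$. The triangle-collapsing condition on $\phi$ is immediate: on any triangle $ijk \in [n]$ with $\phi(i) = \phi(k)$, triangle-collapsing of $\tau$ and monotonicity of $c$ each pin down the middle-vertex values. For the weak-simplicial condition it suffices to treat the top simplex $0 \dots n$, since every simplex of $[n]$ is a face of it and $\overline{\Cyl(K)}$ is face-closed. If $\phi$ is non-injective we are done; otherwise let $j^*$ be the transition index of $c$. When $\tau$ is also injective, $\tau(0 \dots n) \in K$ and $\phi(0 \dots n)$ is a type-$4$ path via Lemma~\ref{lem:closure_paths}. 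The main obstacle is the case where $\phi$ is injective but $\tau$ is not: tc-simplicity of $\tau$ makes its level sets intervals, and $\phi$-injectivity together with monotonicity of $c$ restricts those intervals to have length $\leq 2$ and forces at most one nontrivial interval, namely $\{j^*, j^*+1\}$ with $\tau(j^*) = \tau(j^*+1)$. Applying $\tau$ to the sub-simplex $0 \cdots j^* \,(j^*{+}2) \cdots n \in [n]$ yields a simplicial path, hence one in $K$, exhibiting $\phi(0 \dots n)$ as a type-$3$ path of $\Cyl(K) \subseteq \overline{\Cyl(K)}$.

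For the inverse, given $\phi:[n]\to\overline{\Cyl(K)}$ in $\ascat{TcOSC}$, set $\tau \defeq \pi_K \circ \phi$ and $c \defeq \pi_{[1]} \circ \phi$ via the projections of $V(K)\times\{0,1\}$. By Lemma~\ref{lem:closure_paths}, the first-coordinate projection of any path in $\overline{\Cyl(K)}$ lies in $K$ up to an adjacent repeat, so $\tau$ is weak-simplicial. The tc condition on $\tau$ falls out of Lemma~\ref{lem:closure_results}.\ref{itm:closure_result_4}: if $\tau(i)=\tau(k)$ on a triangle $ijk$ then $\phi(ijk)$ cannot lie in $\overline{\Cyl(K)}$, hence is non-simplicial, and since $\phi(i)\neq\phi(k)$ already, one of $\phi(j)=\phi(i)$ or $\phi(j)=\phi(k)$ must hold, either way forcing $\tau(j)=\tau(i)$. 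Monotonicity of $c$ is read off the second-coordinate sequences of length-$2$ paths allowed by Lemma~\ref{lem:closure_paths}: only $00$, $01$, $11$ occur. That $\Psi$ and this projection are mutually inverse is then immediate.

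Naturality in $[n]$ and in $K$ is automatic because $\Psi$ is defined coordinate-wise, so it commutes with pre- and post-composition. For the moreover clause, $\sigma(\iota_i)$ sends $\tau$ to $\iota_i \circ \tau = (j \mapsto (\tau(j), i))$, while $(e_i)_{\sigma(K)}$ sends $\tau$ to $(\tau, c_i)$ with $c_i$ the constant-$i$ morphism $[n]\to[1]$, and $\Psi_{K,[n]}(\tau, c_i) = (j \mapsto (\tau(j), i))$; hence $\sigma \iota_i = \Psi \circ (e_i \sigma)$ as natural transformations $\sigma \Rightarrow \sigma \circ \overline{\Cyl}$, which is the intended content of the equality modulo the isomorphism $\Psi$.
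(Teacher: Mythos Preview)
Your argument is correct and essentially the same as the paper's: both identify the two sides via the projections $\pi_K,\pi_{[1]}$ together with the characterisation of $\overline{\Cyl(K)}$ in Lemma~\ref{lem:closure_paths}, with your version spelling out the forward map $\Psi$ and the case analysis that the paper compresses into the single phrase ``by Lemma~\ref{lem:closure_paths}, each $\phi_{[n]}$ is a bijection''. One small slip: in verifying that $\tau=\pi_K\circ\phi$ is triangle-collapsing you assert $\phi(i)\neq\phi(k)$ without justification---the omitted case $\phi(i)=\phi(k)$ is handled immediately by the triangle-collapsing property of $\phi$ itself.
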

\begin{proof}
Given an ordinal $[n]$,
\begin{equation}
\Cyl(\sigma(K))[n] = \MorXY{\ascat{TcOSC}}{[n]}{K} \times \MorXY{\ascat{TcOSC}}{[n]}{[1]}
\end{equation}
and
\begin{equation}
\sigma(\overline{\Cyl(K)})[n] = \MorXY{\ascat{TcOSC}}{[n]}{\overline{\Cyl(K)}}.
\end{equation}
There are triangle-collapsing simplicial morphisms $\pi_K : \overline{\Cyl(K)} \to K$ and $\pi_{[1]}: \overline{\Cyl(K)} \to [1]$, given by projection to each of the coordinates of the vertex set.
Composition with these projections induces a simplicial map $\phi: \sigma(\overline{\Cyl(K)}) \to \Cyl(\sigma(K))$.
At each ordinal $[n]$, this map is given by
\begin{align}
\phi_{[n]}: \MorXY{\ascat{TcOSC}}{[n]}{\overline{\Cyl(K)}} & \to
\MorXY{\ascat{TcOSC}}{[n]}{K} \times \MorXY{\ascat{TcOSC}}{[n]}{[1]} \\
p &\mapsto (\pi_K\circ p, \pi_{[1]} \circ p).
\end{align}
By Lemma~\ref{lem:closure_paths}, each $\phi_{[n]}$ is a bijection and
hence $\phi$ is an isomorphism of simplicial sets.
Moreover, the formula is natural in $K$ and hence this is a natural isomorphism between the two functors $\ascat{TcOSC} \to \ascat{sSet}$.
Finally, the equation $\sigma \iota_i = e_i \sigma$ 
follows automatically from unwinding the definitions, modulo the natural isomorphism $\phi$. 
\end{proof}

\begin{cor}\label{cor:sset_homotopies_same}
Given two triangle-collapsing simplicial morphisms $f, g: K_1 \to K_2$,
\begin{enumerate}
\item if $F: \overline{\Cyl(K_1)} \to K_2$ is a one-step homotopy from $f$ to $g$ then
$\sigma(F)$ is a one-step homotopy from $\sigma(f)$ to $\sigma(g)$.
\item if $F: \Cyl(\sigma(K_1)) \to \sigma(K_2)$ is a one-step homotopy from $\sigma(f)$ to $\sigma(g)$ then $\sigma^{-1}(F)$ is a one-step homotopy from $f$ to $g$.
\end{enumerate}
In particular, the pull-back along $\sigma$ is $\sigma^\ast \Sys[\ascat{sSet}] = \Sys[\ascat{TcOSC}]$.
\end{cor}
\begin{proof}
The first point follows immediately from Lemma~\ref{lem:sset_cyls}.
For the second point, up to a unique isomorphism $F$ is a morphism $\sigma(\overline{\Cyl(K_1)}) \to \sigma(K_2)$.
By Lemma~\ref{lem:sset_embed}, $\sigma$ is full and faithful, so there is a unique morphism $G: \overline{\Cyl(K_1)} \to \Cyl(K_2)$ in $\ascat{TcOSC}$ such that $\sigma(G) = F$.
Since $\sigma$ is faithful, the equations
\begin{equation}
F \circ e_0(\sigma(K_1)) = \sigma(G) \circ \sigma(\iota_0) = \sigma(f)
\quad\text{and}\quad
F \circ e_1(\sigma(K_2)) = \sigma(G) \circ \sigma(\iota_1) = \sigma(g)
\end{equation}
imply
$G \circ \iota_0 = f$
and
$G \circ \iota_1 = g$.
\end{proof}

For the interested reader, these two perspectives can be unified by further embedding $\ascat{sSet}$ and $\ascat{WkRPC}$ in the category of path pairs of sets, $\ascat{pSetPair}$, which was introduced in~\cite{PathSetPaper}.

\begin{defin}
A \mdf{path set} is a functor $\opcat{\Pi} \to \ascat{Set}$ where $\Pi$ is the full subcategory of $\ascat{WkRPC}$, restricted to the objects $\{P_n\}_{n\geq 0}$ in which $P_n$ is the minimal path complex containing the path $0 \dots n$.
Any simplicial set restricts to a path set by composition with the inclusions $P_n \hookrightarrow [n]$.
A \mdf{path pair of sets} is a pair $(S, P)$ in which $S$ is a simplicial set and $P$ is a path subset of $S$.
A morphism $(S_1, P_1)\to (S_2, P_2)$ in $\ascat{pSetPair}$ is simply a simplicial map $f: S_1 \to S_2$ such that $f(P_1) \subseteq P_2$.
\end{defin}

By~\cite[Proposition~9.6]{PathSetPaper}, there is a natural isomorphism $\phi$ such that the functors $N$ and $\phi \circ \Omega$ factor through $\ascat{pSetPair}$, making the diagram of functors~(\ref{eq:pSet_Ch_diagram}) commute.
To be explicit, the functor $\iota: \ascat{sSet} \to \ascat{pSetPair}$ is given by the assignment $S \mapsto (S, S)$ whilst
the functor $\sigma: \ascat{WkRPC} \to \ascat{pSetPair}$ can be defined as
\begin{equation}
\sigma(P)\defeq
\big(
 \MorXY{\ascat{WkRPC}}{-}{\mathcal{R}(P)}
 ,\;
 \MorXY{\ascat{WkRPC}}{-}{P}
\big) \in \Funct{\opcat{\SmplxCat}}{\ascat{Set}} \times \Funct{\opcat{\Pi}}{\ascat{Set}},
\end{equation}
where $\mathcal{R}(P)$ here denotes the path complex of all regular paths on $V(P)$.
Note that $\iota \sigma \neq \sigma \iota$ because the simplicial set component of $\sigma\iota(K)$ is, in general, significantly larger than $\sigma(K)$.

\begin{equation}\label{eq:pSet_Ch_diagram}
\begin{tikzcd}
& \ascat{WkRPC} \arrow[r, hook, "\sigma"] \arrow[rrd, "\phi \circ \Omega"'] & \ascat{pSetPair} \arrow[rd, "\omega"] & \\
\ascat{TcOSC} \arrow[ru, hook, "\iota"] \arrow[rd, hook, "\sigma"'] & & & \ascat{Ch} \\
& \ascat{sSet} \arrow[r, hook, "\iota"'] \arrow[rru, "N"] & \ascat{pSetPair} \arrow[ru, "\omega"'] &
\end{tikzcd}
\end{equation}

The category $\ascat{pSetPair}$ comes equipped with a cylinder functor~\cite[\S~9.1]{PathSetPaper} which induces an associated system of one-step homotopies, \mdf{$\Sys[\ascat{pSetPair}]$}.
This cylinder functor is derived from a box product for $\ascat{pSetPair}$.
The cylinder functor for $\ascat{WkRPC}$ can also be defined in terms of a box product for $\ascat{pSetPair}$ and
these two box products commute with $\sigma$ by~\cite[Proposition~9.7]{PathSetPaper}.
Hence,
$\sigma^\ast \Sys[\ascat{pSetPair}] = \Sys[\ascat{WkRPC}]$.
Moreover, for a path pair of sets $(S, P)$, the simplicial set component of the cylinder over $(S, P)$ is $\Cyl(S)$ and hence $\Sys[\ascat{pSetPair}] = \Sys[\ascat{sSet}]$.

\subsubsection{Other viewpoints}

Since an ordered simplicial complex contains no `degenerate' simplices, one may prefer to view an ordered simplicial complex as a semi-simplicial set (sometimes called a $\Delta$-set).
These objects are the combinatorial data underpinning what~\citeauthor{HatcherAllen2002At} calls a $\Delta$-complex structure~\cite{HatcherAllen2002At}.
One can factor the functor $\sigma:\ascat{TcOSC} \to \ascat{sSet}$ through the category of semi-simplicial sets, by composing with the functor which freely adds in all degeneracies to a semi-simplicial set (the functor $G$ in~\cite[Lemma~1.1]{Rouke1971}).
We choose to focus here on simplicial sets since they are better-studied in the literature and the two perspectives are compatible (see~\cite{Rouke1971}).

Finally, we note that there is a notion of homotopy theory for directed topological spaces, from the field of directed algebraic topology~\cite[\S~4.2]{fajstrup2016directed}.
To the authors' knowledge, this theory has not yet been connected to the various homotopy and homology theories of directed graphs.
One could choose instead to view the directed flag complex as a directed topological space; this perspective requires further study.

 \section{Homotopies for digraphs}\label{sec:homotopy_theory}
The over-arching goal for this section is to develop a system of one-step homotopies for $\ascat{TcDgr}$, to which the homology of the directed flag complex is invariant.
In Section~\ref{sec:homotopy_theory_allowed_paths}, we pull back along the allowed path complex functor $\mathcal{A}:\ascat{WkDgr} \to \ascat{WkRPC}$.
We find that the resulting system, $\mathcal{A}^\ast \Sys[\ascat{WkRPC}]$ coincides with the `path homotopy' theory for digraphs, first introduced in~\cite{Grigoryan2014}.
In particular, the one-step homotopies can be characterised in terms of a cylinder functor for $\ascat{WkDgr}$.
This coincidence is unsurprising since the path homology of digraphs was first defined as a functor which factors through $\ascat{WkRPC}$~\cite{Grigoryan2012}.
However, this connection motivates a similar study of the directed flag complex.

In Section~\ref{sec:dflag_homotopy}, we repeat this process for the directed flag complex functor, $\dFl: \ascat{TcDgr} \to \ascat{TcOSC}$;
this yields the desired system for $\ascat{TcDgr}$.
Unlike in Section~\ref{sec:homotopy_theory_allowed_paths}, there exist one-step $\Sys[\ascat{TcOSC}]$-homotopies which cannot be realised as the image of some morphism in $\ascat{TcDgr}$, through $\dFl$.
However, in Corollary~\ref{cor:dfl_f_to_g}, we obtain simple, edge-based conditions for the existence of these one-step $\Sys[\ascat{TcOSC}]$-homotopies and hence characterise $\dFl^\ast \Sys[\ascat{TcOSC}]$.

\subsection{Pull-back along the allowed path functor}\label{sec:homotopy_theory_allowed_paths}

Consider the allowed path functor, $\mathcal{A}:\ascat{WkDgr} \to \ascat{WkRPC}$.
We can characterise the pull-back system $\mathcal{A}^\ast \Sys[\ascat{WkRPC}]$ intrinsically within the category $\ascat{WkDgr}$.
First, in order to construct an appropriate cylinder functor, we must define sensible notions of intervals and products in $\ascat{WkDgr}$.

\begin{defin}
The \mdf{standard unit interval} is the digraph ${\unitint}$ on nodes $\{0, 1\}$ whose only edge is $0 \to 1$.
\end{defin}

\begin{defin}
Given two digraphs $G, H$, the \mdf{box product}, \mdf{$G\boxdot H$} is a digraph on $V(G) \times V(H)$ such that
\begin{equation}
(x, y) \to (x', y') \iff (x = x \text{ and } y \to y') \text{ or } (x \to x' \text{ and } y = y' ).
\end{equation}
\end{defin}

Taking the box product with the standard unit interval, gives us a `cylinder' digraph.
As one would hope, it is straightforward to show that $\mathcal{A}$ commutes with the relevant cylinder operations.

\begin{lemma}\label{lem:a_cyl_commutes}
For any digraph $G$, $\mathcal{A}(G \boxdot {\unitint}) = \Cyl(\mathcal{A}(G))$.
\end{lemma}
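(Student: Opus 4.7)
The plan is to verify the set equality by unwinding both definitions; since both sides are path complexes on $V(G)\times\{0,1\}$, it suffices to check that the sets of elementary paths coincide.

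First, I would catalogue the edges of $G\boxdot{\unitint}$: by definition $(x,i)\to(x',i')$ iff either $i=i'$ with $x\to x'$ in $G$ (a \emph{horizontal} edge on either copy of $G$), or $x=x'$ with $i\to i'$ in ${\unitint}$ (a \emph{vertical} edge $(x,0)\to(x,1)$). In particular, along any allowed path in $G\boxdot{\unitint}$ the sequence of second coordinates is non-decreasing in $\{0,1\}$, and transitions from $0$ to $1$ can happen only at vertical edges, which forces equal first coordinates.

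Next, I would take an arbitrary allowed path $p=(v_0,j_0)\dots(v_k,j_k)\in\mathcal{A}(G\boxdot{\unitint})$ and do a three-way case split on the sequence $j_0,\dots,j_k$. If all $j_l=0$ (resp.\ all $j_l=1$) every consecutive step is horizontal, so $v_0\to\dots\to v_k$ in $G$ and $p$ is a path of form 1 (resp.\ form 2) in $\Cyl(\mathcal{A}(G))$. Otherwise there is a unique transition index $t$ with $j_t=0$ and $j_{t+1}=1$; this step must be vertical, so $v_t=v_{t+1}$. Setting $u_l=v_l$ for $l\leq t$ and $u_l=v_{l+1}$ for $t<l\leq k-1$, the horizontal edges on the two levels give $u_0\to u_1\to\dots\to u_{k-1}$ in $G$, so $u_0\dots u_{k-1}\in\mathcal{A}(G)$, and $p$ is visibly the form-3 cylinder path
\begin{equation}
(u_0,0)\dots(u_t,0)(u_t,1)\dots(u_{k-1},1).
\end{equation}
Hence $p\in\Cyl(\mathcal{A}(G))$.

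For the reverse inclusion, I would take each of the three types of paths in $\Cyl(\mathcal{A}(G))$ and check that every consecutive pair of vertices is an edge of $G\boxdot{\unitint}$: horizontal bottom edges for form 1, horizontal top edges for form 2, and for form 3 the three kinds of consecutive pairs (horizontal bottom, the single vertical edge $(v_i,0)\to(v_i,1)$, horizontal top), which is immediate from the fact that $v_0\dots v_k\in\mathcal{A}(G)$.

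There is no real obstacle here; the argument is a direct case analysis, with the only subtlety being the bookkeeping at the transition index $t$ (matching the $k+2$ vertices of a form-3 cylinder path against the $k+1$ vertices of an allowed path in the box product via the identification $v_t=v_{t+1}$).
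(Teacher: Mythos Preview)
Your proposal is correct. The paper does not actually give a proof of this lemma; it merely states it after remarking that ``it is straightforward to show that $\mathcal{A}$ commutes with the relevant cylinder operations.'' Your direct verification by cataloguing the edges of $G\boxdot{\unitint}$ and doing a case split on the second-coordinate sequence is exactly the straightforward argument the paper has in mind, and your bookkeeping at the transition index is handled correctly.
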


Moreover, there are weak digraph maps $\iota_i: G\boxdot {\unitint} \hookrightarrow G$ given by $\iota_i(v) \defeq (v, i)$.
Clearly $\mathcal{A}(\iota_i)$ is the component $(\iota_i)_G$ of the natural transformation $\iota_i$, associated to the cylinder functor for $\ascat{WkRPC}$.
This allows us to easily characterise when a vertex map constitutes a one-step homotopy.

\begin{cor}\label{cor:characterise_ahomot}
Given two weak digraph maps $f, g: G \to H$,
a vertex map $F: V(G) \times \{0, 1\} \to V(H)$ is a one-step homotopy from $\mathcal{A}(f)$ to $\mathcal{A}(g)$
if and only if it induces a weak digraph map $F: G \boxdot {\unitint} \to H$ such that
\begin{equation}
F \circ \iota_0 = f
\quad\text{and}\quad
F \circ \iota_1 = g.
\end{equation}
\end{cor}
\begin{proof}
The vertex map $F$ is a weak digraph map $G \boxdot {\unitint} \to H$ if and only if it is a weak path morphism $\mathcal{A}(G \boxdot {\unitint}) \to \mathcal{A}(H)$, by Lemma~\ref{lem:dig_map_iff_path_morph}.
By Lemma~\ref{lem:a_cyl_commutes}, this is equivalent to $F$ forming a weak path morphism $\Cyl(\mathcal{A}(G)) \to \mathcal{A}(H)$.
\end{proof}

We are led to the definition of a functor $\mdf{\Cyl}:\ascat{WkDgr} \to \ascat{WkDgr}$, given by
$\mdf{\Cyl(G)}\defeq G \boxdot {\unitint}$.
The inclusions $\iota_i$ are in fact natural transformation $\id_{\ascat{WkDgr}} \Rightarrow \Cyl$ and the obvious projection map $\Cyl(G) \to G$ induces a natural transformation $\rho:\Cyl \Rightarrow \id_{\ascat{WkDgr}}$.
Together, this gives $\Cyl$ the structure of a cylinder functor and determines an associated system of one-step homotopies for $\ascat{WkDgr}$, which we denote \mdf{$\Sys[\mathcal{A}]$}.
For brevity, we use \mdf{$\simeq_{\mathcal{A}}$}, in lieu of $\simeq_{\Sys[\mathcal{A}]}$, to denote the resulting equivalence relations.
In this system, given two weak digraph maps $f, g: G \to H$, a \mdf{one-step $\Sys[\mathcal{A}]$-homotopy from $f$ to $g$} is a weak digraph map $F: G\boxdot {\unitint} \to H$ such that
\begin{equation}
F \circ \iota_0 = f
\quad\text{and}\quad
F \circ \iota_1 = g.
\end{equation}

\begin{cor}
The pull-back system along $\mathcal{A}:\ascat{WkDgr} \to \ascat{WkRPC}$ is
$\mathcal{A}^\ast \Sys[\ascat{WkRPC}] = \Sys[\mathcal{A}]$.
\end{cor}

A one-step $\Sys[\mathcal{A}]$-homotopy is precisely the definition of a one-step digraph homotopy, in the sense introduced in~\cite{Grigoryan2014}.
Hence, $\mathcal{A}^{\ast} \Sys[\ascat{WkRPC}]$ coincides with the system introduced in~\cite{Grigoryan2014},
and enjoys a particularly simple characterisation of its one-step homotopies.

\begin{cor}[{\cite[\S~3]{Grigoryan2014}}]\label{cor:exists_a_eq_f_to_g}
Given two weak digraph maps $f, g: G \to H$, there is a one-step $\Sys[\mathcal{A}]$-homotopy from $f$ to $g$ if and only if for every $x\in V(G)$, $f(x) \tooreq g(x)$.
\end{cor}

\begin{rem}
The category $\ascat{WkDgr}$ admits a cofibration category structure in which the weak equivalences are those digraph maps which induce isomorphisms on path homology, $(H \circ \mathcal{A})(G)$ (see~\cite{Carranza2024}).
This structure can be refined so that the weak equivalences are those maps which induce an isomorphism on the entire second page of the magnitude path spectral sequence~\cite{hepworth2024bigraded}.
\end{rem}

\subsection{Pull-back along the directed flag complex functor}\label{sec:dflag_homotopy}

We now consider the directed flag complex functor $\dFl: \ascat{TcDgr} \to \ascat{TcOSC}$; we seek an intrinsic characterisation of $\dFl^\ast \Sys[\ascat{TcOSC}]$.
As before, we require a product structure for $\ascat{TcDgr}$.
Note that, if $v_0 \to v_1$ is an edge in $G$ then $(v_0, 0)(v_1, 1)\not\in\dFl(G\boxdot {\unitint})$ and so $\overline{\Cyl(\dFl(G))}\not\subseteq \dFl(G\boxdot {\unitint})$.
Hence, we must consider a larger product graph, to ensure that all the necessary directed cliques are present.

\begin{defin}
Given two digraphs $G, H$, the \mdf{cross product}, \mdf{$G\times H$} is a digraph on $V(G) \times V(H)$ such that
\begin{equation}
  (x, y) \to (x', y') \iff x \tooreq x' \text{ and } y \tooreq y'\text{ but not }(x=x'\text{ and }y=y').
\end{equation}
\end{defin}

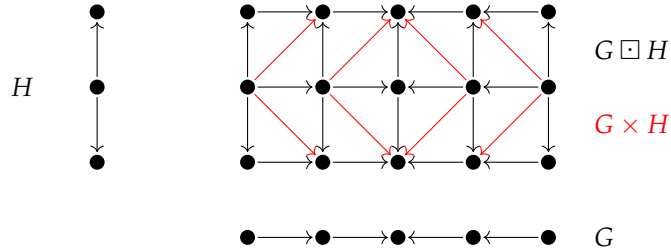
\begin{figure}[hptb]
  \centering
  \begin{tikzpicture}[
  roundnode/.style={circle, fill=black, minimum size=4pt},
	squarenode/.style={fill=black, minimum size=4pt},
	inner sep=2pt,
	outer sep=1pt
  ]

\node (a) at (0, 0) [roundnode]  {};
\node (b) at (1, 0) [roundnode]  {};
\node (c) at (2, 0) [roundnode]  {};
\node (d) at (3, 0) [roundnode]  {};
\node (e) at (4, 0) [roundnode]  {};

\node (a2) at (0, 1) [roundnode] {};
\node (b2) at (1, 1) [roundnode] {};
\node (c2) at (2, 1) [roundnode] {};
\node (d2) at (3, 1) [roundnode] {};
\node (e2) at (4, 1) [roundnode] {};

\node (a3) at (0, 2) [roundnode] {};
\node (b3) at (1, 2) [roundnode] {};
\node (c3) at (2, 2) [roundnode] {};
\node (d3) at (3, 2) [roundnode] {};
\node (e3) at (4, 2) [roundnode] {};

\draw[->] (a)--(b);
\draw[->] (b)--(c);
\draw[->] (d)--(c);
\draw[->] (e)--(d);

\draw[->] (a2)--(b2);
\draw[->] (b2)--(c2);
\draw[->] (d2)--(c2);
\draw[->] (e2)--(d2);

\draw[->] (a3)--(b3);
\draw[->] (b3)--(c3);
\draw[->] (d3)--(c3);
\draw[->] (e3)--(d3);

\draw[->] (a2)--(a);
\draw[->] (b2)--(b);
\draw[->] (c2)--(c);
\draw[->] (d2)--(d);
\draw[->] (e2)--(e);

\draw[->] (a2)--(a3);
\draw[->] (b2)--(b3);
\draw[->] (c2)--(c3);
\draw[->] (d2)--(d3);
\draw[->] (e2)--(e3);

\draw[->, red] (a2)--(b);
\draw[->, red] (b2)--(c);
\draw[->, red] (d2)--(c);
\draw[->, red] (e2)--(d);

\draw[->, red] (a2)--(b3);
\draw[->, red] (b2)--(c3);
\draw[->, red] (d2)--(c3);
\draw[->, red] (e2)--(d3);

\node[anchor=west] at (4.5, 1.5) {$G \boxdot H$};
\node[anchor=west] at (4.5, 0.5) {\color{red}$G \times H$};

\node (a_base) at (0, -1) [roundnode] {};
\node (b_base) at (1, -1) [roundnode] {};
\node (c_base) at (2, -1) [roundnode] {};
\node (d_base) at (3, -1) [roundnode] {};
\node (e_base) at (4, -1) [roundnode] {};

\draw[->] (a_base)--(b_base);
\draw[->] (b_base)--(c_base);
\draw[->] (d_base)--(c_base);
\draw[->] (e_base)--(d_base);

\node[anchor=west] at (4.5, -1) {$G$};

\node (i0) at (-2, 0) [roundnode] {};
\node (i1) at (-2, 1) [roundnode] {};
\node (i2) at (-2, 2) [roundnode] {};

\draw[->] (i1)--(i0);
\draw[->] (i1)--(i2);

\node at (-3, 1) {$H$};

\end{tikzpicture}
   \caption{
  Illustration of the difference between $G\boxdot H$ and $G\times H$.
  The box product $G\boxdot H$ consists of just the black edges.
  The cross product $G\times H$ additionally contains the red edges.
  }\label{fig:digraph_products}
\end{figure}

For any digraphs $G, H$ there is an inclusion $G \boxdot H \subseteq G \times H$, as illustrated in Figure~\ref{fig:digraph_products}.
Unfortunately, $\dFl$ does not enjoy as simple a characterisation of its cylinders (c.f.\ Lemma~\ref{lem:a_cyl_commutes} for $\mathcal{A}$).
However, we can show that $G\times {\unitint}$ is the minimal digraph whose directed flag complex contains $\overline{\Cyl(\dFl(G))}$.
This is in analogy with $\overline{\Cyl(K)}$ being the minimal simplicial complex containing $\Cyl(K)$.

\begin{lemma}\label{lem:smallest_containing_cyl}
For any digraph $G$, $\Cyl(\dFl(G))\subseteq\overline{\Cyl(\dFl(G))} \subseteq \dFl(G \times {\unitint})$.
Moreover, $G\times {\unitint}$ is the smallest such digraph, i.e.
\begin{equation}
\Cyl(\dFl(G)) \subseteq \dFl(H) \implies G \times {\unitint} \subset H.
\end{equation}
\end{lemma}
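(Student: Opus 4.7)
The plan is to split the statement into three parts: the trivial containment $\Cyl(\dFl(G))\subseteq\overline{\Cyl(\dFl(G))}$, the nontrivial containment $\overline{\Cyl(\dFl(G))}\subseteq \dFl(G\times{\unitint})$, and the minimality assertion. The first is immediate from the definition of the simplicial closure, so the work is entirely in the last two.

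For the middle containment, I would invoke Lemma~\ref{lem:closure_paths} to reduce to checking four path types: (i) $(v_0,0)\dots(v_k,0)$, (ii) $(v_0,1)\dots(v_k,1)$, (iii) $(v_0,0)\dots(v_i,0)(v_i,1)\dots(v_k,1)$, and (iv) $(v_0,0)\dots(v_i,0)(v_{i+1},1)\dots(v_k,1)$, each arising from some $v_0\dots v_k\in\dFl(G)$. For each, I take an arbitrary pair of positions $a<b$ in the path and verify that the two vertices are distinct and connected by a directed edge in $G\times{\unitint}$. The verification reduces to checking, coordinate-by-coordinate, that the first coordinates satisfy $w_a\tooreq w_b$ (using that $v_0\dots v_k$ is a directed clique in $G$, so any two distinct $v_a,v_b$ with $a<b$ yield $v_a\to v_b$) and that the second coordinates satisfy the analogous condition in ${\unitint}$ (either equal to $0$, equal to $1$, or $0\to 1$), with equality on both coordinates never occurring simultaneously. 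The only mild subtlety is in type (iii), where $(v_i,0)$ and $(v_i,1)$ share a first coordinate but are separated in the second coordinate by $0\to 1$.

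For the minimality claim, suppose $\Cyl(\dFl(G))\subseteq\dFl(H)$. Containment of vertex sets is immediate from the inclusion of singletons. It remains to show every directed edge of $G\times{\unitint}$ lies in $H$. These edges come in four flavours:
\begin{enumerate}
\item $(x,0)\to(x,1)$ for $x\in V(G)$: obtained from the $1$-simplex $(x,0)(x,1)\in\Cyl(\dFl(G))$ (type~3 of Lemma~\ref{lem:closure_paths} with $k=0$).
\item $(x,0)\to(x',0)$ for $x\to x'$ in $G$: obtained from $(x,0)(x',0)\in\Cyl(\dFl(G))$ (type~1).
\item $(x,1)\to(x',1)$ for $x\to x'$ in $G$: symmetric to the previous case.
\item $(x,0)\to(x',1)$ for $x\to x'$ in $G$: this edge does \emph{not} appear as a $1$-simplex of $\Cyl(\dFl(G))$, so instead I would extract it from the $2$-simplex $(x,0)(x,1)(x',1)\in\Cyl(\dFl(G))$ (type~3 with $k=1,\,i=0$), whose inclusion in $\dFl(H)$ forces the required edge between its first and third vertices.
\end{enumerate}
In each case the relevant simplex of $\Cyl(\dFl(G))$ is a simplex of $\dFl(H)$ by hypothesis, hence its vertices span a directed clique in $H$, yielding the required edge.

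The main obstacle is item~(4) above: the diagonal edge $(x,0)\to(x',1)$ is the only one that is not witnessed by a $1$-simplex of $\Cyl(\dFl(G))$ and requires one to recognise that it is a face of a directed $3$-clique already present in the cylinder. Once this is spotted, everything else is a direct translation between the clique conditions on $G$ and on $G\times{\unitint}$, together with a careful but mechanical case analysis over pairs of indices in the four path types.
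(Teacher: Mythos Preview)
Your proposal is correct and follows essentially the same approach as the paper. The only difference is that the paper first observes that, since $\dFl(G\times{\unitint})$ is a simplicial complex, $\overline{\Cyl(\dFl(G))}\subseteq\dFl(G\times{\unitint})$ is equivalent to $\Cyl(\dFl(G))\subseteq\dFl(G\times{\unitint})$, thereby avoiding your type~(iv) case; and for minimality it extracts the diagonal edge $(v,0)\to(w,1)$ from the $2$-simplex $(v,0)(w,0)(w,1)$ rather than your $(x,0)(x,1)(x',1)$, but either choice works.
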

\begin{proof}
To begin, note that 
\begin{equation}
\overline{\Cyl(\dFl(G))} \subset \dFl(H) \iff \Cyl(\dFl(G)) \subseteq \dFl(H)
\end{equation}
because $\dFl(H)$ is a simplicial complex.
Therefore, it suffices to prove that $\Cyl(\dFl(G)) \subseteq \dFl(G\times{\unitint})$ and that $G\times{\unitint}$ is the smallest such digraph.

Take a simplex $v_0 \dots v_k \in \dFl(G)$.
Now for any $i < j$, $v_i \to v_j$ in $G$ and certainly $0 = 0$, $1=1$ and $0 \to 1$ in ${\unitint}$.
Therefore, the paths
$(v_0, 0) \dots (v_k, 0)$,
$(v_0, 1) \dots (v_k, 1)$,
and 
$(v_0, 0) \dots (v_i, 0)(v_i, 1) \dots (v_k, 1)$
are all cliques in $G\times{\unitint}$.
Hence, $\Cyl(\dFl(G)) \subseteq \dFl(G \times {\unitint})$.

Now suppose $\Cyl(\dFl(G))\subseteq \dFl(H)$.
Take a node $v \in V(G)$, then $(v, 0)(v, 1) \in \Cyl(\dFl(G))$,
so there is an edge $(v, 0) \to (v, 1)$ in $H$.
Take an edge $v \to w$ in $G$, then $(v, 0)(w, 0)(w, 1)\in\Cyl(\dFl(G))$
so there are edges $(v, 0) \to (w, 0)$ and $(v, 0) \to (w, 1)$ in $H$.
Likewise, $(v, 0) (w, 1) (w, 1) \in \Cyl(\dFl(G))$ and hence there is an edge $(v, 1) \to (w, 1)$ in $H$.
These are all the edges in $G \times {\unitint}$ and hence $G\times {\unitint} \subseteq H$.
\end{proof}

Despite this, it is typically a much stronger condition for a vertex map to induce a triangle-collapsing digraph map $G\times{\unitint} \to H$ than to induce a triangle-collapsing simplicial morphism $\overline{\Cyl(\dFl(G))} \to \dFl(H)$.
The main exception is when $G$ is oriented.

\begin{lemma}\label{lem:cyl_differs_iff_recip}
Given a digraph $G$, the directed $3$-cliques in $\dFl(G\times{\unitint}) \setminus \overline{\Cyl(\dFl(G))}$ are
\begin{equation}
\mathfrak{R}\defeq \left\{
(v_0, 0)(v_1, \alpha)(v_0, 1)
\rmv
v_0 \recip v_1
,\,
\alpha=0\text{ or }1
\right\}.
\end{equation}
Moreover, all simplices in 
$\dFl(G\times{\unitint}) \setminus \overline{\Cyl(\dFl(G))}$
have at least one simplex from $\mathfrak{R}$ as a face.
In particular, $\overline{\Cyl(\dFl(G))} = \dFl(G\times{\unitint})$ if and only if $G$ is oriented.
\end{lemma}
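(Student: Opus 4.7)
The plan is to attack all three claims by unwinding the definitions of $G\times{\unitint}$, $\Cyl(\dFl(G))$, and its closure $\overline{\Cyl(\dFl(G))}$, and then doing a case analysis on the second-coordinate pattern of a simplex $(v_0,\alpha_0)\dots(v_k,\alpha_k)\in\dFl(G\times{\unitint})$. Since the only non-trivial edge of ${\unitint}$ is $0\to 1$, the pattern $(\alpha_0,\dots,\alpha_k)$ must be non-decreasing, so there is a unique transition index $m$ with $\alpha_i=0$ for $i\leq m$ and $\alpha_i=1$ for $i>m$. For the first claim I would simply run through all four $\alpha$-patterns of a $2$-simplex. The constant patterns $(0,0,0)$ and $(1,1,1)$ give images of directed $3$-cliques of $G$ under $\iota_0$ and $\iota_1$, which lie in $\Cyl(\dFl(G))$. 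The pattern $(0,0,1)$ on $(v_0,v_1,v_2)$ forces $v_0\to v_1$, $v_1\tooreq v_2$ and $v_0\tooreq v_2$; comparing with Lemma~\ref{lem:closure_paths}, the only scenario that fails to land in $\overline{\Cyl(\dFl(G))}$ is $v_1\to v_2$ with $v_0=v_2$, which forces $v_0\recip v_1$ and gives the $\alpha=0$ member of $\mathfrak{R}$. The $(0,1,1)$ case is symmetric and yields the $\alpha=1$ member.

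For the moreover statement, I take a $k$-simplex $\sigma=(v_0,\alpha_0)\dots(v_k,\alpha_k)$ of $\dFl(G\times{\unitint})\setminus\overline{\Cyl(\dFl(G))}$ with transition index $0\leq m<k$ (the extreme cases $m=-1,k$ immediately force $\sigma\in\Cyl(\dFl(G))$). Within each $\alpha$-block the first coordinates are distinct and form a directed clique of $G$, while cross-block pairs $i\leq m<j$ satisfy $v_i\tooreq v_j$. If every cross-block relation is a proper edge then $v_0\dots v_k\in\dFl(G)$ and Lemma~\ref{lem:closure_paths} places $\sigma$ inside $\overline{\Cyl(\dFl(G))}$, contradicting our hypothesis. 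Hence some pair $i\leq m<j$ has $v_i=v_j$. If $i<m$, I would extract the face $(v_i,0)(v_{i+1},0)(v_j,1)$; the distinctness of the $0$-block gives $v_{i+1}\neq v_i=v_j$, so the edge $(v_{i+1},0)\to(v_j,1)$ forces $v_{i+1}\to v_i$, and combined with $v_i\to v_{i+1}$ this is a reciprocal pair, placing the face in $\mathfrak{R}$. The case $i=m$ with $j>m+1$ is dual, using the face $(v_m,0)(v_{m+1},1)(v_j,1)$.

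The delicate subcase is $i=m$, $j=m+1$, where only the pair $v_m=v_{m+1}$ coincides. Here I would argue that if \emph{no} further cross-block coincidence occurs, then the collapsed sequence $v_0\dots v_m v_{m+2}\dots v_k$ is a directed clique of $G$ and $\sigma$ becomes a type~(3) path in $\Cyl(\dFl(G))$, contradicting $\sigma\notin\overline{\Cyl(\dFl(G))}$; consequently some additional coincidence $v_\ell=v_r$ with $\ell\leq m$ and $r\geq m+2$ must exist, and the pair $(\ell,r)$ lands in one of the two previous subcases. Handling this reduction cleanly is the main obstacle, since I have to be precise about which combinations of repetitions are compatible with membership/non-membership in $\Cyl(\dFl(G))$. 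Finally, the ``in particular'' clause is immediate from the first two parts: if $G$ is oriented then $\mathfrak{R}=\emptyset$, so no simplex can lie outside $\overline{\Cyl(\dFl(G))}$; conversely, any reciprocal pair $v_0\recip v_1$ directly produces a simplex in $\mathfrak{R}\subseteq\dFl(G\times{\unitint})\setminus\overline{\Cyl(\dFl(G))}$.
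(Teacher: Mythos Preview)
Your proposal is correct and follows essentially the same approach as the paper. The paper writes a simplex in the difference as $(v_0,0)\dots(v_k,0)(w_0,1)\dots(w_m,1)$, notes that some cross-block coincidence $v_{i^\ast}=w_{j^\ast}$ must occur (else the concatenated sequence is a clique of $G$ and the simplex lies in $\overline{\Cyl(\dFl(G))}$), observes that the sole coincidence $(i^\ast,j^\ast)=(k,0)$ would place the simplex in $\Cyl(\dFl(G))$, and then extracts the face $(v_{i^\ast},0)(v_k,0)(w_{j^\ast},1)$ when $i^\ast\neq k$ or $(v_{i^\ast},0)(w_0,1)(w_{j^\ast},1)$ when $j^\ast\neq 0$; your choice of $(v_i,0)(v_{i+1},0)(v_j,1)$ and its dual is an equally valid variant of the same extraction, and your ``delicate subcase'' is exactly the paper's exclusion of $(i^\ast,j^\ast)=(k,0)$.
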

\begin{proof}
Suppose there is a clique $p\in\dFl(G\times{\unitint}) \setminus \overline{\Cyl(\dFl(G))}$.
We can write
\begin{equation}
p = (v_0, 0)\dots (v_{k}, 0)(w_0, 1)\dots (w_m, 1)
\end{equation}
where $v_0 \dots v_k$ and $w_0 \dots w_m$ are both simplices in $\dFl(G)$ and moreover $v_i \tooreq w_j$ for every $0\leq i \leq k$ and $0\leq j \leq m$. 
Note that we must have $v_{i^{\ast}} = w_{j^\ast}$ for some $(i^\ast, j^\ast)$ because otherwise $v_0 \dots v_k w_0 \dots w_m\in \dFl(G)$ and so $p\in\overline{\Cyl(\dFl(G))}$, by Lemma~\ref{lem:closure_paths}.
Moreover, we can assume that $(i^{\ast}, j^{\ast}) \neq (k, 0)$ because if this is the only time that $v_i = w_j$ then $p\in \Cyl(\dFl(G))$.
We split into cases depending on which coordinate differs.

\textbf{Case 1:} If $i^\ast \neq k$ then a face of $p$ is
\begin{equation}
p'\defeq (v_{i^\ast}, 0)(v_k, 0)(w_{j^\ast}, 1)
=
(v_{i^\ast}, 0)(v_k, 0)(v_{i^\ast}, 1).
\end{equation}
This clique must be in $\dFl(G\times{\unitint})$ which implies that there is a reciprocal edge $v_{i^\ast} \recip v_k$.

\textbf{Case 2:} If $j^\ast \neq 0$ then a face of $p$ is
\begin{equation}
p''\defeq (v_{i^\ast}, 0)(w_0, 0)(w_{j^\ast}, 1)
=
(v_{i^\ast}, 0)(w_0, 0)(v_{i^\ast}, 1).
\end{equation}
This clique must be in $\dFl(G\times{\unitint})$ which implies that there is a reciprocal edge $v_{i^\ast} \recip w_0$.
\end{proof}

For general $G$, $\dFl(G\times{\unitint})$ may significantly differ from $\overline{\Cyl(\dFl(G))}$.
Indeed, the two may not even be $\Sys[\ascat{TcOSC}]$-homotopy equivalent.

\begin{figure}[hbtp]
  \centering
  \begin{tikzpicture}[
  roundnode/.style={circle, fill=black, minimum size=4pt},
	squarenode/.style={fill=black, minimum size=4pt},
	inner sep=2pt,
	outer sep=1pt
  ]

  \node (a) at (0, 0) [roundnode, label=left:$a$] {};
  \node (b) at (1, 0) [roundnode, label=right:$b$] {};
  \node (ap) at (0, 1) [roundnode, label=left:$a'$] {};
  \node (bp) at (1, 1) [roundnode, label=right:$b'$] {};

  \draw[->, red] (a) to (b);
  \draw[->, bend left] (b) to (a);
  \draw[->, bend left, red] (ap) to (bp);
  \draw[->] (bp) to (ap);

  \draw[->, red] (a) -- (ap);
  \draw[->] (b) -- (bp);

  \draw[->] (a) -- (bp);
  \draw[->] (b) -- (ap);

\end{tikzpicture}
   \caption{
  The digraph $G\times {\unitint}$, as relabelled in the proof of Proposition~\ref{prop:dfl_times_not_simeq_cyl}.
  The tree, $T$, used to produce the basis for $\ker\bd_1$ is shown in red.
  }\label{fig:cylinder_of_recip}
\end{figure}
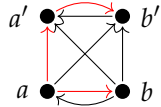

\begin{prop}\label{prop:dfl_times_not_simeq_cyl}
There exists a digraph $G$ for which, as path complexes,
$\Cyl(\dFl(G)) \not\simeq \dFl(G \times {\unitint})$
and
$\overline{\Cyl(\dFl(G))} \not\simeq \dFl(G \times {\unitint})$.
\end{prop}
\begin{proof}
We first note that for any simplicial complex $K$,
$
\Cyl(K) \simeq K \simeq \overline{\Cyl(K)}
$.
The proof of this is relatively technical, so we delay these results to
Propositions~\ref{prop:homotopy_for_path_complex_cylinders} and~\ref{prop:homotopy_for_osc_cylinders}.
Taking $K = \dFl(G)$, we see that it suffices to find $G$ for which
$\dFl(G)\not\simeq\dFl(G\times{\unitint})$.

Let $G$ be the complete digraph on two nodes $a, b$, i.e.\ there is a reciprocal edge $a\recip b$ (see Figure~\ref{fig:cylinder_of_recip}).
Since $G$ contains no $k$-cliques for $k\geq 2$, a simple Euler characteristic argument shows that $\dim H_1(\dFl(G)) = 1$.
Therefore, by Theorem~\ref{thm:path_complex_homot_invariance}, it suffices to show that $\dim H_1(\dFl(G \times {\unitint})) \neq 1$.
In the sequel we show that in fact $\dim H_1(\dFl(G\times {\unitint})) = 0$.
To reduce notation we change the names of the vertices in $G \times {\unitint}$ as follows
\begin{equation}
(a, 0) \mapsto a,
\quad
(b, 0) \mapsto b,
\quad
(a, 1) \mapsto a',
\quad
(b, 1) \mapsto b'.
\end{equation}

In the chain complex
\begin{equation}
\begin{tikzcd}
\Omega_2(\dFl(G \times {\unitint})) \arrow[r, "\bd_2"]
& \Omega_1(\dFl(G \times {\unitint})) \arrow[r, "\bd_1"]
& \Omega_0(\dFl(G \times {\unitint}))
\end{tikzcd}
\end{equation}
we can find a basis for $\ker \bd_1$ by first picking an oriented tree $T\subset G$ and then adding one cycle to the basis for each edge in $E(G) \setminus E(T)$.
One such basis (corresponding to the tree shown in Figure~\ref{fig:cylinder_of_recip}) is as follows
\begin{alignat}{3}
&c_1 &&\defeq ab + ba &&= \bd_2( aba' + baa' ), \\
&c_2 &&\defeq a'b' + b'a' &&= \bd_2( ab'a' + aa'b' ), \\
&c_3 &&\defeq ab + ba' - aa' &&= \bd_2( aba' ), \\
&c_4 &&\defeq aa' + a'b' - ab' &&= \bd_2( aa'b' ), \\
&c_5 &&\defeq [aa' + a'b'] - [ab + bb'] &&= \bd_2( aa'b' - abb').
\end{alignat}
Note that each of the $3$-cliques on the right-hand side belongs to $\Omega_2(\dFl(G\times {\unitint}))$.
Therefore, every element of this basis for $\ker\bd_1$ is null-homologous.
\end{proof}

In light of this, we choose to characterise when there is a one-step $\Sys[\ascat{TcOSC}]$-homotopy from $\dFl(f)$ to $\dFl(g)$ in terms of edge-based conditions, akin to Corollary~\ref{cor:exists_a_eq_f_to_g}.

\begin{cor}\label{cor:dfl_f_to_g}\label{cor:dfl_homot_characterise}
Given two triangle-collapsing digraph maps $f, g: G \to H$,
a vertex map $F: V(G) \times \{0, 1\} \to V(H)$
is a one-step $\Sys[\ascat{TcOSC}]$-homotopy from $\dFl(f)$ to $\dFl(g)$ if and only if the following conditions are all satisfied:
\begin{enumerate}
\item $x \tooreq y$ $\implies$ $f(x) \tooreq g(y)$; \item $x \to y$ in $G$ and  $f(x) = g(y)$  $\implies$ $f(x) = f(y) = g(x) = g(y)$;
\item 
$F \circ \iota_0 = f$
and
$F \circ \iota_1 = g$.
\end{enumerate}
\end{cor}
\begin{proof}
First note that the third condition uniquely determines $F$ and, furthermore, is necessary for $F$ to induce a one-step $\Sys[\ascat{TcOSC}]$-homotopy from $\dFl(f)$ to $\dFl(g)$.
Given this, the first condition becomes equivalent to $F$ inducing a weak digraph map $G\times{\unitint} \to H$.
By Lemma~\ref{lem:dfl_functoriality}, this is then equivalent to $F$ inducing a weak simplicial morphism $\overline{\Cyl(\dFl(G))}\to \dFl(H)$.
Then, Lemma~\ref{lem:closure_paths} and Lemma~\ref{lem:smallest_containing_cyl}, show us that
\begin{equation}
\Sk_1(\dFl(G\times{\unitint}))
\subseteq
\overline{\Cyl(\dFl(G))}
\subseteq
\dFl(G\times{\unitint}).
\end{equation}
Finally, Lemma~\ref{lem:osc_tpc_equiv_condition} shows us that the second condition is equivalent to $F$ inducing a triangle-collapsing simplicial morphism $\overline{\Cyl(\dFl(G))} \to \dFl(H)$.
\end{proof}

This begets the definition of a system of one-step homotopies for $\ascat{TcDgr}$.

\begin{defin}\label{def:t_dfl}
For arbitrary digraphs $G, H$, we construct a digraph $\mdf{\Sys[\dFl](G, H)}$ on the vertex set $\MorXY{\ascat{TcDgr}}{G}{H}$ as follows.
Given triangle-collapsing digraph maps $f, g: G\to H$, include an edge $f \to g$ if and only if
\begin{enumerate}
\item $x\tooreq y \implies f(x) \tooreq g(y)$; and
\item $x\to y$ and $f(x) = g(y)$ $\implies$ $f(x) = f(y)= g(x) = g(y)$.
\end{enumerate}
Repeating this construction over all $G, H$ yields a system of one-step homotopies, $\mdf{\Sys[\dFl]}$, for $\ascat{TcDgr}$.
For brevity, we use \mdf{$\simeq_{\dFl}$}, in lieu of $\simeq_{\Sys[\dFl]}$, to denote the resulting equivalence relations.
\end{defin}

\begin{cor}\label{cor:dfl_pullback}
The pull-back system along the directed flag complex functor $\dFl: \ascat{TcDgr}\to \ascat{TcOSC}$ is $\dFl^\ast \Sys[\ascat{TcOSC}] = \Sys[\dFl]$.
\end{cor}

Comparing the conditions in Definition~\ref{def:t_dfl} to Corollary~\ref{cor:characterise_ahomot}, we see that $\Sys[\dFl]$ is a strictly smaller system.

\begin{cor}
For triangle-collapsing digraph maps $f, g: G \to H$,
$f\simeq_{\dFl} g \implies f \simeq_{\mathcal{A}} g$
\end{cor}

This system is of a markedly different quality: we do not define the system in terms of a cylinder functor. However, when $G$ is oriented, Lemma~\ref{lem:cyl_differs_iff_recip} makes it possible to find morphisms in $\ascat{TcDgr}$ which induce one-step $\Sys[\ascat{TcOSC}]$-homotopies.

\begin{cor}
Let $G$ be an oriented digraph and let $f, g: G \to H$ be triangle-collapsing digraph maps.
A vertex map $F: V(G) \times \{0, 1\} \to V(H)$ induces a one-step $\Sys[\ascat{TcOSC}]$-homotopy from $\dFl(f)$ to $\dFl(g)$ if and only if it induces a triangle-collapsing digraph map $F: G\times {\unitint} \to H$, and
\begin{equation}
F \circ \iota_0 = f
\quad\text{and}\quad
F \circ \iota_1 = g.
\end{equation}
\end{cor}

\subsection{Comparing the two pull-backs}\label{sec:examples}

To illustrate the difference between the two systems, we now discuss a number of standard examples of $\simeq_{\mathcal{A}}$ equivalence that require extra conditions in order to pass over to the new system, $\Sys[\dFl]$.
These examples may help the practitioner to decide which homology theory is appropriate for a given application.
We will primarily be concerned with when a digraph retracts onto one of its induced subgraphs.

\begin{defin}
Given a digraph $G=(V, E)$ an \mdf{induced subgraph} is a digraph consisting of some of the vertices in $V$ but all the edges between those vertices, i.e.\ a digraph of the form $A=(V_A, E_A)$ where $V_A \subseteq V$ and $E_A = E \cap (V_A \times V_A)$.
\end{defin}

\begin{defin}
Let $\{\ast\}$ denote the digraph on a single vertex.
We say that a digraph $G$ is \mdf{$\mathcal{A}$-contractible} (resp.\ \mdf{$\dFl$-contractible}) if $G\simeq_{\mathcal{A}} \{\ast\}$ (resp.\ $G\simeq_{\dFl}\{\ast\})$.
\end{defin}

\begin{rem}
Any $\dFl$-contractible digraph is $\mathcal{A}$-contractible.
\end{rem}

\begin{defin}
Given a digraph $G$ and an induced subgraph $A \subseteq G$, we say a weak digraph map $r:G \to A$ is a \mdf{retraction} if $\restr{r}{A} = \id_A$.
Let $\iota: A \to G$ denote the inclusion digraph map.
If $\iota \circ r\simeq_{\mathcal{A}} \id_G$ then we say $r$ is an \mdf{$\mathcal{A}$-deformation retraction}.
If $r$ is triangle-collapsing and $\iota \circ r \simeq_{\dFl} \id_G$ then we say $r$ is a \mdf{$\dFl$-deformation retraction}.
\end{defin}

\citeauthor{Grigoryan2014} obtained a particularly simple necessary condition for $r$ to be an $\mathcal{A}$-deformation retract.

\begin{prop}[{\cite[Corollary~3.7]{Grigoryan2014}}]
Given a digraph $G$ and a retraction $r:G \to A$,
suppose further that
\begin{equation}
\forall x \in V(G) \quad x \tooreq r(x)
\quad\text{or}\quad
\forall x \in V(G) \quad r(x) \tooreq x,
\end{equation}
where edges are required in $G$,
then $r$ is an $\mathcal{A}$-deformation retraction.
\end{prop}

A similar result can be obtained for $\dFl$-deformation retractions.

\begin{prop}\label{prop:dfl_def_retract}
Given a digraph $G$, and a triangle-collapsing retraction $r: G\to A$,
suppose further that
\begin{equation}
\forall x \in V(G) \quad x \tooreq r(x)
\quad\text{and}\quad
\forall (x, y) \in E(G) \quad x \to r(y)
\end{equation}
or
\begin{equation}
\forall x \in V(G) \quad r(x) \tooreq x
\quad\text{and}\quad
\forall (x, y) \in E(G) \quad r(x) \to y,
\end{equation}
where edges are required in $G$,
then $r$ is a $\dFl$-deformation retraction.
\end{prop}
\begin{proof}
Let $\iota: A \to G$ denote the inclusion digraph map.
Then $r \circ \iota = \id_A$ and $\iota \circ r$ is a digraph map $G \to G$, given by the same vertex map as $r$.
Let us assume the first pair of conditions holds; the second pair admits a similar proof.
We show that there is a one-step $\Sys[\dFl]$-homotopy from $\id_G$ to $\iota \circ r$.

First note that both $\id_G$ and $\iota \circ r$ are triangle-collapsing digraph maps.
Then, our assumptions on $r$ imply that
\begin{equation}
x \tooreq y \implies x \tooreq r(y),
\end{equation}
and hence the first condition of Definition~\ref{def:t_dfl} holds.
For the second condition, take an edge $x \to y$ in $G$ such that $\id_G(x) = (\iota \circ r)(y)$.
Then our assumption on $r$ implies that $x \to r(y) = x$.
Therefore, the edge is a self-loop, which cannot exist.
Hence, the second condition is vacuously satisfied.
\end{proof}

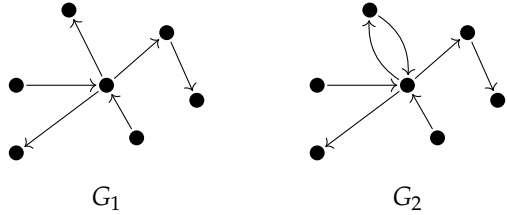
\begin{figure}[hbtp]
  \centering
  \begin{tikzpicture}[
  roundnode/.style={circle, fill=black, minimum size=4pt},
	squarenode/.style={fill=black, minimum size=4pt},
	inner sep=2pt,
	outer sep=1pt
  ]

  \node (a) at (0, 0) [roundnode] {};
  \node (b) at (-0.5, 1) [roundnode] {};
  \node (c) at (0.8, 0.7) [roundnode] {};
  \node (d) at (1.2, -0.2) [roundnode] {};
  \node (e) at (-1.2, -0.9) [roundnode] {};
  \node (f) at (0.4, -0.7) [roundnode] {};
  \node (g) at (-1.2, 0) [roundnode] {};

  \draw[->] (a) -- (b);
  \draw[->] (a) -- (c);
  \draw[->] (c) -- (d);
  \draw[->] (a) -- (e);
  \draw[->] (f) -- (a);
  \draw[->] (g) -- (a);

  \node (a2) at (4, 0) [roundnode] {};
  \node (b2) at (3.5, 1) [roundnode] {};
  \node (c2) at (4.8, 0.7) [roundnode] {};
  \node (d2) at (5.2, -0.2) [roundnode] {};
  \node (e2) at (2.8, -0.9) [roundnode] {};
  \node (f2) at (4.4, -0.7) [roundnode] {};
  \node (g2) at (2.8, 0) [roundnode] {};

  \draw[->, bend left] (a2) to (b2);
  \draw[->, bend left] (b2) to (a2);
  \draw[->] (a2) -- (c2);
  \draw[->] (c2) -- (d2);
  \draw[->] (a2) -- (e2);
  \draw[->] (f2) -- (a2);
  \draw[->] (g2) -- (a2);

  \node at (0, -1.5) {$G_1$};
  \node at (4, -1.5) {$G_2$};
\end{tikzpicture}
   \caption{Examples of two pseudo-trees.
  The first example, $G_1$, is oriented and hence is both $\dFl$-contractible and $\mathcal{A}$-contractible.
  The second example, $G_2$, is not oriented and hence is only $\mathcal{A}$-contractible.
  }\label{fig:example_tree}
\end{figure}

\begin{defin}
We say a digraph $G$ is a \mdf{pseudo-tree} if its underlying undirected graph (collapsing any reciprocal edges to a single edge) is a tree.
\end{defin}

\begin{prop}
If $G$ is a pseudo-tree then $G$ is $\mathcal{A}$-contractible.
Furthermore, a pseudo-tree is $\dFl$-contractible if and only if it is oriented.
\end{prop}
\begin{proof}
For the first statement, we refer the reader to Example~3.10 of~\cite{Grigoryan2014}.
For the second, suppose $G$ is not oriented.
Then, since $G$ is connected and not oriented, a standard induction argument shows
$\# E(G) \geq \# V(G)$.
Note that a pseudo-tree has no $k$-cliques for $k\geq 3$.
Therefore, $\Omega(\dFl(G))$ has homology only in degrees $0$ and $1$.
By considering the Euler characteristic, we obtain
\begin{equation}
\dim H_1(\dFl(G)) - \dim H_0(\dFl(G)) = \# E(G) - \#V(G) \geq 0
\end{equation}
Moreover, note that $\dim H_0(\dFl(G))=1$ since this counts the number of weakly connected components of $G$.
Therefore, $H_1(\dFl(G))$ must be non-trivial and hence $G$ is not $\dFl$-contractible.

It remains to show that if $G$ is oriented then $G$ is $\dFl$-contractible.
We proceed by induction on the number of edges in $G$.
If $\# E = 0$ then $G=\{*\}$, which is clearly $\dFl$-contractible.
Else, pick a leaf node $v$ and let $w$ be its unique neighbour.
Let $r$ be given by $r(v) \defeq w$ and the identity elsewhere.
Then we see that $G\setminus\{v\}$ is a $\dFl$-deformation retract of $G$ by Proposition~\ref{prop:dfl_def_retract}.
\end{proof}

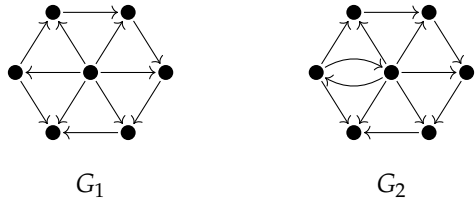
\begin{figure}[hbtp]
  \centering
  \begin{tikzpicture}[
  roundnode/.style={circle, fill=black, minimum size=4pt},
	squarenode/.style={fill=black, minimum size=4pt},
	inner sep=2pt,
	outer sep=1pt
  ]

  \node (a) at (4, 0) [roundnode] {};
  \node (b) at (3, 0) [roundnode] {};
  \node (c) at (5, 0) [roundnode] {};
  \node (d) at (3.5, 0.8) [roundnode] {};
  \node (e) at (4.5, 0.8) [roundnode] {};
  \node (f) at (3.5, -0.8) [roundnode] {};
  \node (g) at (4.5,  -0.8) [roundnode] {};

  \draw[->, bend left] (a) to (b);
  \draw[->, bend left] (b) to (a);

  \draw[->] (a) -- (c);
  \draw[->] (a) -- (d);
  \draw[->] (a) -- (e);
  \draw[->] (a) -- (f);
  \draw[->] (a) -- (g);
  \draw[->] (d) -- (e);
  \draw[->] (e) -- (c);
  \draw[->] (c) -- (g);
  \draw[->] (g) -- (f);
  \draw[->] (b) -- (f);
  \draw[->] (b) -- (d);

  \node (a2) at (0, 0) [roundnode] {};
  \node (b2) at (-1, 0) [roundnode] {};
  \node (c2) at (1, 0) [roundnode] {};
  \node (d2) at (-0.5, 0.8) [roundnode] {};
  \node (e2) at ( 0.5, 0.8) [roundnode] {};
  \node (f2) at (-0.5, -0.8) [roundnode] {};
  \node (g2) at ( 0.5,  -0.8) [roundnode] {};

  \draw[->] (a2) -- (b2);
  \draw[->] (a2) -- (c2);
  \draw[->] (a2) -- (d2);
  \draw[->] (a2) -- (e2);
  \draw[->] (a2) -- (f2);
  \draw[->] (a2) -- (g2);
  \draw[->] (d2) -- (e2);
  \draw[->] (e2) -- (c2);
  \draw[->] (c2) -- (g2);
  \draw[->] (g2) -- (f2);
  \draw[->] (b2) -- (f2);
  \draw[->] (b2) -- (d2);

  \node at (0, -1.5) {$G_1$};
  \node at (4, -1.5) {$G_2$};
\end{tikzpicture}
   \caption{
  Two examples of star-like graphs.
  In both examples the centre node is the star centre.
  In the first example, there is no reciprocal edge and hence $G_1$ is both $\dFl$-contractible and $\mathcal{A}$-contractible.
  In the second example, there is a reciprocal edge involving the star centre and hence $G_2$ is only $\mathcal{A}$-contractible.
  }\label{fig:example_star_like}
\end{figure}

\begin{defin}\label{def:star_like}
We say a digraph $G$ is \mdf{star-like} if there exists $v_0 \in V(G)$ such that for any other $v\in V(G)$, $v_0 \to v$.
We say $G$ is \mdf{inverse star-like} if there exists $v_0\in V(G)$ such that for any other $v\in V(G)$, $v \to v_0$.
We call $v_0$ a \mdf{(inverted) star centre}.
\end{defin}

\begin{cor}\label{cor:star_like}
If $G$ is star-like or inverse star-like then $G$ is $\mathcal{A}$-contractible.
If $G$ has a (inverted) star centre $v_0$ and there are no reciprocal edges involving $v_0$, then $G$ is $\dFl$-contractible.
\end{cor}
\begin{proof}
For the first statement, we refer the reader to Example~3.11 of~\cite{Grigoryan2014}.
For the second statement, we deal with the case where $G$ is star-like with star centre $v_0$, the inverse case admits a similar proof.
Let $G_0$ be the induced subgraph on $\{v_0\}$ and let $r: G \to G_0$ be the unique digraph map.
We show that $r$ satisfies the conditions of Proposition~\ref{prop:dfl_def_retract}.

Certainly $r$ is triangle-collapsing.
First, since $v_0$ is a star centre, for any $x \in V(G)$ we have $ r(x) = v_0 \tooreq x$.
Second, given an edge $x \to y$,
we cannot have $y=v_0$
because otherwise there would be a double edge $v_0 \recip x$.
Since $y \neq v_0$, there must be an edge $v_0 = r(x) \to y$.
\end{proof}

Using these examples, we can show that the directed flag complex cannot be made into a functor from $\ascat{WkDgr}$.
Indeed, even the subsequent chain complex cannot be made into such a functor.

\begin{figure}[htbp]
  \centering
  \begin{tikzpicture}[
  roundnode/.style={circle, fill=black, minimum size=4pt},
	squarenode/.style={fill=black, minimum size=4pt},
	inner sep=2pt,
	outer sep=1pt
  ]

  \node (a) at (0, 0) [roundnode, label=left:$v_1$] {};
  \node (b) at (1, 0) [roundnode, label=right:$v_2$] {};
  \node at (0.5, -0.5) {$G$};

  \node (c) at (3, 1.5) [roundnode, label=left:$v_1$] {};
  \node (d) at (4, 1.5) [roundnode, label=right:$v_2$] {};
  \node (e) at (3.5, 2.5) [roundnode, label=above:$v_0$] {};
  \node at (3.5, 1) {$H$};

  \node (f) at (6, 0) [roundnode, label=left:$v_1$] {};
  \node (g) at (7, 0) [roundnode, label=right:$v_2$] {};
  \node at (6.5, -0.5) {$G$};

  \draw[->, bend left] (a) to (b);
  \draw[->, bend left] (b) to (a);

  \draw[->, bend left] (c) to (d);
  \draw[->, bend left] (d) to (c);
  \draw[->, bend right] (e) to (c);
  \draw[->, bend left] (e) to (d);

  \draw[->, bend left] (f) to (g);
  \draw[->, bend left] (g) to (f);

  \node (g1) at (10, 1) {$\Field$};
  \node (g2) at (11, 2) {$0$};
  \node (g3) at (12, 1) {$\Field$};

  \draw[->] (g1) to node [midway, above, sloped] {$\inducedhom{f}$} (g2);
  \draw[->] (g2) to node [midway, above, sloped] {$\inducedhom{g}$} (g3);
  \draw[->] (g1) to node [midway, below, sloped] {$\id$} (g3);

  \draw[->, bend left] (0.5, 0.5) to node [midway, above, sloped] {$f$} (2, 1.75);
  \draw[->, bend left] (5, 1.75) to node [midway, above, sloped] {$g$} (6.5, 0.5);
  \draw[->] (2, 0) to node [midway, below, sloped] {$\id$} (5, 0);

  \draw[->, blue,
    line join=round,
    decorate, decoration={
        zigzag,
        segment length=7,
        amplitude=1.3,post=lineto,
        post length=2pt
    }] (7, 1.5) to node [midway, above] {$H_1 \circ \Omega \circ \dFl$} (9.5, 1.5);

\end{tikzpicture}
   \caption{A commuting diagram in $\Dgr$ which illustrates why $\dFl$ cannot be made into a functor $\Dgr\to\Ch$.
  For definitions of the digraph maps, see the proof of Proposition~\ref{prop:dflag_func_failure}}\label{fig:dflag_functoriality}
\end{figure}
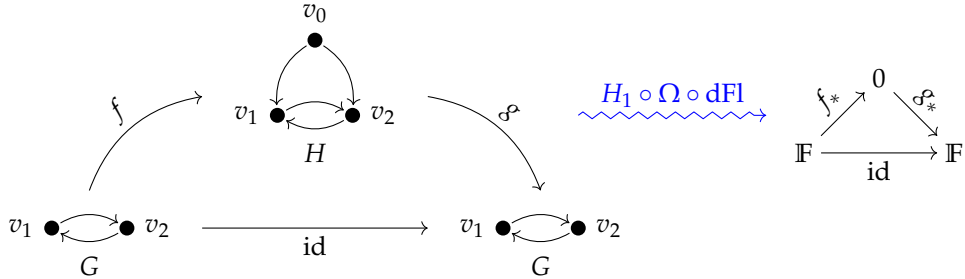

\begin{prop}\label{prop:dflag_func_failure}
The object map,
$\Omega \circ \dFl: \Obj(\ascat{WkDgr}) \to \Obj(\ascat{Ch})$,
which assigns to each digraph the chain complex associated to its directed flag complex,
cannot be made into a functor $\ascat{WkDgr} \to \ascat{Ch}$.
\end{prop}
\begin{proof}
Suppose, for contradiction, that such a functor exists.
Consider the three digraphs illustrated in Figure~\ref{fig:dflag_functoriality}.
The weak digraph map $f$ is given by the obvious inclusion, whilst $g$ maps $v_1$ and $v_2$ to themselves and $v_0\mapsto v_1$.
Note that $g\circ f$ composes to the identity and hence the triangle of digraph maps commutes.
Applying $H_1 \circ \Omega \circ \dFl$ to this diagram we find that $\inducedhom{g}\circ\inducedhom{f}$ is the identity on $H_1(\dFl(G))$.
Using Lemma~\ref{lem:gens_of_osc}, we see that $\Omega(\dFl(G))$ is
\begin{equation}
\begin{tikzcd}
0 \arrow[r] & \Field\langle v_1 v_2, v_2 v_1 \rangle \arrow[r, "\bd_1"] & \Field\langle v_1,  v_2\rangle
\end{tikzcd}
\end{equation}
and moreover $\bd_1(v_1 v_2) = - \bd_1(v_2 v_1) \neq 0$.
Therefore, $H_1(\dFl(G))\cong \Field$.
However, $H_1(\dFl(H))$ is the trivial vector space because $H$ is star-like and hence $\dFl$-contradctible.
The identity on $H_1(\dFl(G))$ cannot factor through $0$, so we obtain a contradiction.
\end{proof}

\begin{defin}
If there is a reciprocal edge $v_0 \recip v_1$ then we say that $w$ \mdf{cones} the reciprocal edge if
\begin{equation}
(w \to v_0\text{ and }w \to v_1)
\quad\text{or}\quad
(v_0 \to w\text{ and }v_1\to w).
\end{equation}
\end{defin}

\begin{prop}\label{prop:retract_a_to_b0}
Let $G$ be a digraph, let $a \in V(G)$ and let the neighbouring vertices of $a$ be denoted $b_0, \dots, b_n$.
Then let $H$ denote the subgraph of $G$ on $V(G) \setminus \{a\}$.
Suppose $b_0$ is such that, for all $i\in \{ 1, \dots, n \}$,
\begin{align}
a \to b_i & \implies b_0 \to b_i; \label{eq:a_to_bi}\\
a \leftarrow b_i & \implies b_0 \leftarrow b_i. \label{eq:bi_to_a}
\end{align}
Then there is a weak digraph map $r: G \to H$ given by $\restr{r}{H} = \id_H$ and $r(a) = b_0$.
Moreover, $r$ is an $\mathcal{A}$-deformation retract.
Furthermore, if 
there is not a reciprocal edge $a \recip b_0$ and
there does not exist $i\in \{1, \dots, n\}$ such that there is a reciprocal edge $b_0 \recip b_i$ which is coned by $a$
then $r$ is a $\dFl$-deformation retract.
\end{prop}
\begin{proof}
For the proof that $r$ is a weak digraph map and $\mathcal{A}$-deformation retract, see Example~3.15 of~\cite{Grigoryan2014}.
We prove the final statement, under the assumption that $a \to b_0$ (the $b_0 \to a$ case admits a similar proof).
Note that the lack of a reciprocal edge means $b_0 \not \to a$.

We proceed by checking the conditions of Proposition~\ref{prop:dfl_def_retract}.
To begin, take arbitrary $x \in V(G)$ then either $x \neq a$, in which case $x = r(x)$, or $x = a$, in which case $x \to b_0 = r(x)$.
Therefore, we see $x \tooreq r(x)$.
Next, take an edge $x \to y$.
Either $y \neq a$, in which case $x \to y = r(y)$, or $y=a$ in which case $x=b_i$ for some $i$.
We have seen that $b_0 \not \to a$, so we must have $i > 0$ and hence condition~(\ref{eq:bi_to_a}) implies that $x=b_i \to b_0 = r(y)$.

It remains to show that $r$ is triangle-collapsing, so take a simplex $v_0 v_1 v_2 \in \dFl(G)$ and supose that $r(v_0) = r(v_2)$
Since $r$ is the identity on $H$ we must have $\{v_0, v_2\} = \{a, b_0\}$ and $v_1=b_i$ for some $i>0$.
Moreover, we know $b_0 \not\to a$ so $v_0 v_1 v_2$ must have the form $a b_i b_0$ for some $i> 0$.
But then condition (\ref{eq:a_to_bi}) implies $b_0 \to b_i$ and hence there is a reciprocal edge $b_i \recip b_0$ which is coned by $a$, contradicting our assumption.
\end{proof}

\begin{prop}
Let $G_n$ be a complete digraph on $n$ nodes.
Then $G_n$ is $\mathcal{A}$-contractible for every $n\geq 1$
but $G_n$ is $\dFl$-contractible if and only if $n=1$.
\end{prop}
\begin{proof}
The first statement follows immediately from Corollary~\ref{cor:star_like} because $G_n$ is star-like.
For the second, it is clear that $G_1$ is contractible.
Given $n>1$, note that $\dim\Omega_n(\dFl(G_n))=0$ because there can be no $n$-simplices on a vertex set of size $n$.
Therefore, it suffices to find a cycle, i.e.\ an element $c_{n-1}\in \Omega_{n-1}(\dFl(G_n))$ such that $\bd c_{n-1} = 0$.
We construct these inductively via a suspension-style argument, starting with the base case $n=2$.
We denote the nodes of $G_n$ by $x_1, \dots, x_n$.
When $n=2$, we can take $c_1\defeq x_0 x_1 + x_1 x_0$.

Now suppose $c_{n-1}\in \Omega_{n-1}(\dFl(G_{n}))$ is a cycle, such that $\bd c_{n-1} = 0$.
Using Lemma~\ref{lem:gens_of_osc}, we can write $c_{n-1}$ as a sum of $(n-1)$-simplices,
\begin{equation}
c_{n-1} = \sum_{v_0 \dots v_{n-1} \in \dFl(G_n)} \alpha^{v_0 \dots v_{n-1}} v_0 \dots v_{n-1}
\end{equation}
for some coefficients $\alpha^{v_0 \dots v_{n-1}}\in\Field$.
Note that every $(n-1)$-simplex in $\dFl(G_n)$ is also an $(n-1)$-simplex of $\dFl(G_{n+1})$.
Hence, suppressing notation for the inclusion $G_n \hookrightarrow G_{n+1}$, we can view $c_{n-1}$ as an element of $\Omega_{n-1}(\dFl(G_{n+1}))$.
Moreover,
for every $(n-1)$-simplex $v_0 \dots v_{n-1}\in\dFl(G_n)$,
there are $n$-simplices $v_0 \dots v_{n-1} x_{n+1}, x_{n+1} v_0 \dots v_{n-1}\in\dFl(G_{n+1})$.
We form two sums
\begin{align}
c_{n, s} &\defeq 
\sum_{v_0 \dots v_{n-1} \in \dFl(G_n)} \alpha^{v_0 \dots v_{n-1}} (x_{n+1} v_0 \dots v_{n-1}),  \\
c_{n, t} &\defeq 
\sum_{v_0 \dots v_{n-1} \in \dFl(G_n)} \alpha^{v_0 \dots v_{n-1}} (v_0 \dots v_{n-1} x_{n+1}),
\end{align}
such that $c_{n,s}, c_{n,t} \in \Omega_n(\dFl(G_{n+1}))$.
A standard calculation, using the fact that $\bd c_{n-1} = 0$, shows that $\bd(c_{n, s}) = c_{n-1}$ and $\bd(c_{n, t}) = (-1)^n c_{n-1}$.
Therefore, we can set $c_n \defeq c_{n, s} + (-1)^{n+1} c_{n, t}$ to complete the inductive step.
\end{proof}

Despite not being contractible, we anticipate that most of the homology groups of $\dFl(G_n)$ are trivial.
The following conjecture has been verified up to $n=8$, via Flagser~\cite{Luetgehetmann2020}.
\begin{conjecture}
Let $G_n$ denote a complete digraph on $n$ nodes.
If $n> 1$ then
\begin{equation}
\dim H_k(\dFl(G_n)) = 
\begin{cases}
1 &\text{if }k=0,\\
!n &\text{if }k=n-1,\\
0 &\text{otherwise},
\end{cases}
\end{equation}
where $!n$ is the number of derangements of $\{1, \dots, n\}$.
\end{conjecture}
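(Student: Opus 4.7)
The plan is to identify $\dFl(G_n)$ with the classical \emph{complex of injective words} on $[n]$ and then invoke its known homotopy type.

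Since $G_n$ is the complete simple digraph, every pair of distinct vertices is joined by a reciprocal edge, and hence every ordered tuple of distinct vertices $v_{i_0} \dots v_{i_k}$ is automatically a directed $(k+1)$-clique. Consequently, $\dFl(G_n)$ coincides, as an ordered simplicial complex, with $\mathrm{Inj}_n$: the complex whose $k$-simplices are the injective maps $\{0, \dots, k\} \to [n]$, with face maps by letter deletion. By Lemma~\ref{lem:gens_of_osc}, $\Omega(\dFl(G_n))$ is then the standard simplicial chain complex of $\mathrm{Inj}_n$, so it suffices to compute the homology of the latter.

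The key input is the classical theorem, due originally to Farmer and later reproved via a lexicographic shelling by Bj\"orner and Wachs, that $\mathrm{Inj}_n$ is shellable and has the homotopy type of a wedge of $!n$ spheres of dimension $n-1$. Transporting this through the identification above, together with the observation that $G_n$ is weakly connected (and hence $\dim H_0(\dFl(G_n)) = 1$ by the preceding proposition on weakly connected components), immediately yields the stated Betti numbers.

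For a self-contained argument, I would proceed by induction on $n$, with the base case $n=2$ supplied by the explicit cycle $c_1 = x_0 x_1 + x_1 x_0$ just above. The inductive step decomposes $\Omega_\bullet(\dFl(G_n))$ according to the position of the newly added vertex $x_n$, producing either a long exact sequence of the pair $(\dFl(G_n), \dFl(G_{n-1}))$ matching the derangement recurrence $!n = (n-1)\bigl(!(n-1) + !(n-2)\bigr)$, or an explicit acyclic discrete Morse matching on the face poset of $\mathrm{Inj}_n$ whose critical cells are precisely the derangements of $[n]$ in top dimension. The main obstacle is the sign bookkeeping as $x_n$ sweeps through positions, together with the combinatorial identification of the critical cells with derangements; neither issue is conceptually deep, but writing out a fully rigorous matching (or verifying the shellability claim from scratch in the path-complex framework used here) is tedious and occupies several pages.
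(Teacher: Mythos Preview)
The paper does not attempt to prove this statement: it is explicitly labelled a conjecture and is only verified computationally up to $n=8$ via Flagser. Your proposal therefore goes beyond what the paper itself offers.

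Your identification is correct and, once made, already constitutes a complete proof. For the complete digraph $G_n$, every ordered tuple of distinct vertices is a directed clique, so $\dFl(G_n)$ is precisely the semi-simplicial set of injective words on $[n]$, and by Lemma~\ref{lem:gens_of_osc} the chain complex $\Omega(\dFl(G_n))$ is its ordinary simplicial chain complex. The reduced homology of the complex of injective words is a classical computation: it is free of rank $!n$, concentrated in degree $n-1$. This was first established by Farmer (1979) and reproved by Bj\"orner--Wachs via shellability and by Reiner--Webb; the result holds integrally and hence over any field $\Field$. Together with $\dim H_0 = 1$ (connectedness of $G_n$), this gives exactly the conjectured Betti numbers.

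Your closing paragraph sketching an inductive or discrete-Morse argument is therefore unnecessary for a proof, though it is essentially how the cited authors proceed. The only thing your write-up needs is to replace the hedged language (``main obstacle'', ``tedious'') with a clean citation to one of those papers; nothing further is required.
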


\begin{example}\label{ex:dfl_neq_a_revisit}
Finally, we revisit the digraph, $G$, in Figure~\ref{fig:dfl_neq_a_example} from Example~\ref{ex:dfl_neq_a}.
First, we can show $G$ is $\mathcal{A}$-contractible.
Using Proposition~\ref{prop:retract_a_to_b0}, we can successively retract nodes $N$ and $S$ onto the subgraph spanned by $\{x_0, x_1\}$.
This is a complete digraph; hence $G$ is $\mathcal{A}$-contractible.
For the directed flag complex, the maximum non-trivial homology is in dimension $2$ because there are no directed $4$-cliques.
One can then verify computationally (e.g.\ via Flagser~\cite{Luetgehetmann2020}) that $H(\dFl(G))$ is the homology of the $2$-sphere.
Alternatively, viewing $\dFl(G)$ as a semi-simplicial set, one can recognise that $\dFl(G)$ coincides with the semi-simplicial set underlying a standard $\Delta$-complex structure on $S^2$ (see~\cite[\S~2.1]{HatcherAllen2002At} for a definition).
\end{example}
 \section{Stability of persistent directed flag complex homology}\label{sec:stability}
In this section, we study the stability of persistent homology pipelines which use the directed flag complex.
We do not study the stability of path homology, since it has already been well-treated~\cite{ Chaplin2024,ChowdhurySIAM,zhang2024stability}.

Closely related to the directed flag complex is the ordered tuples complex, as defined by~\citeauthor{turner2019rips}~\cite{turner2019rips}.
This complex can also be viewed as a path complex and can be made into a functor on $\ascat{WkDgr}$.
This allows\ \citeauthor{turner2019rips} to obtain stability theorems for persistent ordered tuples homology, with respect to the correspondence distance~\cite[Theorem~21]{turner2019rips}.
In contrast, \citeauthor{turner2019rips}~showed that ordered set homology (i.e.\ directed flag complex homology) does not enjoy such a general stability result~\cite[\S~5.2]{turner2019rips}.

\subsection{The interleaving distance}
Let \mdf{$\Rposet$} denote $\R$ equipped with the $\leq$ poset structure, viewed as a category.
Given a category $\C$, a \mdf{persistent $\C$-object} is a functor $M:\Rposet \to \C$; the morphisms $M(s\leq t)$ are called the \mdf{structure maps} of $M$.
Given a persistent $\ascat{TcDgr}$-object, $M \in \Funct{\Rposet}{\ascat{TcDgr}}$, its \mdf{persistent (directed flag complex) homology} is $(H \circ \dFl)(M)$.
The resulting object is a persistent finite-dimensional graded vector space, commonly known as a \mdf{persistence module}.
Any persistence module can be described by a complete, discrete invariant, called its \mdf{barcode} (for more information, see~\cite{chazal2016structure, CrawleyBoevey2012}).
The barcode can be used as a statistical descriptor of the initial persistent object $M$.
For more background on persistent homology and its statistical analysis, we refer the reader to~\cite{Chazal2021}.

In order to be useful for applications, it is important that this construction is stable.
That is, when $M, N\in\Funct{\Rposet}{\ascat{TcDgr}}$ differ by some small perturbation to the input data, it is desirable to obtain bounds on
\begin{equation}
d ( (H\circ \dFl)(M), (H \circ \dFl)(N) ),
\end{equation}
for some appropriate choice of metric, $d$.
We can use the structure of $\Rposet$ to put an extended pseudometric on $\Funct{\Rposet}{\C}$, the category of persistent $\C$-objects.

\begin{defin}
Fix a category $\C$ and $\delta \geq 0$ and two persistent objects $M, N: \Rposet \to \C$.
\begin{enumerate}
\item The \mdf{$\delta$-shift of $M$}, $\mdf{\shifted{M}{\delta}}$, is the persistent $\C$-object given by
\begin{equation}
\shifted{M}{\delta}(t) \defeq M(t+\delta)
\quad \text{and} \quad
\shifted{M}{\delta}(s \leq t) \defeq M(s + \delta \leq t + \delta).
\end{equation}
\item The \mdf{$\delta$-transition morphism}, \mdf{$\transmorph{M}{\delta}$}, is a natural transformation $M \Rightarrow
 \shifted{M}{\delta}$, given by $\transmorph{M}{\delta}(t) \defeq M(t \leq t + \delta)$.
\item A \mdf{$\delta$-interleaving} between $M$ and $N$ is a pair of natural transformations
\begin{equation}
f: M \Rightarrow \shifted{N}{\delta}
\quad \text{and} \quad
g: N \Rightarrow \shifted{M}{\delta},
\end{equation}
such that $\shifted{g}{\delta} \circ f = \transmorph{M}{2\delta}$ and $\shifted{f}{\delta} \circ g = \transmorph{N}{2\delta}$.
If such an interleaving exists, we say $M$ and $N$ are \mdf{$\delta$-interleaved (in $\C$)}.
\item Finally, the \mdf{interleaving distance} between $M$ and $N$ is
\begin{equation}
\mdf{d_I(M , N)} \defeq \left\{ \delta \geq 0 \rmv M\text{ and }N \text{ are }\delta\text{-interleaved} \right\} \in [0, \infty].
\end{equation}
\end{enumerate}
\end{defin}

The interleaving distance is an extended pseudo-metric on persistent $\C$-objects~\cite{Bubenik2012}.
Moreover, the interleaving distance behaves well with respect to functors.

\begin{lemma}\label{lem:functors_are_interleaving_lipschitz}
Any functor $F: \C \to \D$ induces a functor $F: \Funct{\Rposet}{\C} \to \Funct{\Rposet}{\D}$.
Endowing these functor categories with the interleaving distance, $F$ is a $1$-Lipschitz map, i.e.\
\begin{equation}
d_I( F(M), F(N) ) \leq d_I (M , N)
\end{equation}
for all $M,N\in\Funct{\Rposet}{\C}$.
\end{lemma}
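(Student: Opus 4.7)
The plan is to unpack the definitions and observe that every piece of the interleaving setup is strictly functorial in $\C$, so post-composition with $F$ carries everything over to $\D$ without any loss.

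First, I would establish the induced functor. Given $M \in \Funct{\Rposet}{\C}$, define $F(M) \defeq F \circ M$, which is a functor $\Rposet \to \D$ and thus an object of $\Funct{\Rposet}{\D}$. Given a natural transformation $\eta: M \Rightarrow N$, define $F(\eta)$ componentwise by $F(\eta)_t \defeq F(\eta_t)$; naturality of $F(\eta)$ is immediate from applying $F$ to each naturality square of $\eta$. Functoriality of this assignment follows from functoriality of $F$.

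Next, I would verify compatibility with shifts and transition morphisms. Directly from the definitions, for any $\delta \geq 0$,
\begin{equation}
F(\shifted{M}{\delta}) = \shifted{F(M)}{\delta}
\quad\text{and}\quad
F(\transmorph{M}{\delta}) = \transmorph{F(M)}{\delta},
\end{equation}
since both identities reduce to evaluation at $t+\delta$ (respectively to the structure map $M(t \leq t+\delta)$) followed by $F$. Similarly, $F$ commutes with the $\delta$-shift operation on natural transformations.

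Now suppose $d_I(M, N) < \infty$ and fix any $\delta > d_I(M, N)$, so that there exists a $\delta$-interleaving consisting of $f: M \Rightarrow \shifted{N}{\delta}$ and $g: N \Rightarrow \shifted{M}{\delta}$ with $\shifted{g}{\delta} \circ f = \transmorph{M}{2\delta}$ and $\shifted{f}{\delta} \circ g = \transmorph{N}{2\delta}$. Applying $F$, the previous step gives natural transformations
\begin{equation}
F(f): F(M) \Rightarrow \shifted{F(N)}{\delta}
\quad\text{and}\quad
F(g): F(N) \Rightarrow \shifted{F(M)}{\delta},
\end{equation}
and, since $F$ preserves composition of natural transformations,
\begin{equation}
\shifted{F(g)}{\delta} \circ F(f) = F(\shifted{g}{\delta} \circ f) = F(\transmorph{M}{2\delta}) = \transmorph{F(M)}{2\delta},
\end{equation}
with the symmetric identity holding for $\shifted{F(f)}{\delta} \circ F(g)$. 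Hence $F(M)$ and $F(N)$ are $\delta$-interleaved in $\D$, so $d_I(F(M), F(N)) \leq \delta$. Taking the infimum over admissible $\delta$ yields $d_I(F(M), F(N)) \leq d_I(M, N)$; the case $d_I(M, N) = \infty$ is trivial. There is no real obstacle here — the argument is purely formal — but care is needed to confirm that shift and transition commute strictly (not merely up to isomorphism) with $F$, which is why the definitions are arranged so that both sides are literally equal.
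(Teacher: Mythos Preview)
Your proof is correct and follows essentially the same approach as the paper: define the induced functor by post-composition and observe that applying $F$ to a $\delta$-interleaving yields a $\delta$-interleaving. The paper's version is terser, omitting the explicit verification that $F$ commutes with shifts and transition morphisms and the infimum argument, but the substance is identical.
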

\begin{proof}
The functor is defined in the obvious way:
given $M\in\Funct{\Rposet}{\C}$,
we define $F(M)(t) \defeq F(M(t))$ and $F(M)(s\leq t)\defeq F(M(s\leq t))$.
If $f, g$ constitute a $\delta$-interleaving between $M, N \in \Funct{\Rposet}{\C}$ then $F(f), F(g)$ constitute a $\delta$-interleaving between $F(M)$ and $F(N)$.
\end{proof}

\subsection{Stability theorems}
Since $\Sys[\dFl]$ is a pull-back of $\Sys[\ascat{WkPathC}]$, it is a transitive system.
Consequently, the composition operator for morphisms in $\ascat{TcDgr}$ passes to the quotient to define a composition operator between $\simeq_{\dFl}$ equivalence classes of triangle-collapsing digraph maps.
This allows us to define a new category.

\begin{defin}
The \mdf{naive $\dFl$-homotopy category}, \mdf{$\ascat{h}_{\dFl}\ascat{TcDgr}$} is the category whose objects are simple digraphs and morphisms are $\simeq_{\dFl}$ equivalence classes of triangle-collapsing digraph maps.
We use the notation \mdf{$[\bullet]_{\dFl}$} to denote objects and morphisms in this category.
\end{defin}

Note that the objects of $\ascat{h}_{\dFl}\ascat{TcDgr}$ are the same as $\ascat{TcDgr}$, but the morphisms are collapsed, according to the $\simeq_{\dFl}$ equivalence relation.
Corollary~\ref{cor:pullback_homotopy_induces_identical} automatically yields the following factorisation and hence a stability theorem.

\begin{cor}\label{cor:functor_factorisations}\label{cor:generic_stability}
The directed flag complex homology functor $H \circ \dFl: \ascat{TcDgr} \to \ascat{grVec}$ factors through $\ascat{h}_{\dFl}\ascat{TcDgr}$.
Therefore,
given $M, N \in \Funct{\Rposet}{\ascat{TcDgr}}$,
\begin{equation}
d_I((H\circ \dFl)(M), (H\circ \dFl)(N)) \leq d_I({[M]}_{\dFl}, {[N]}_{\dFl}) \leq d_I(M, N).
\end{equation}
More explicitly, if there are natural transformations
\begin{equation}
f: M \Rightarrow \shifted{N}{\delta}
\quad \text{and} \quad
g: N \Rightarrow \shifted{M}{\delta},
\end{equation}
such that $\shifted{g}{\delta} \circ f \simeq_{\dFl} \transmorph{M}{2\delta}(t)$
and $\shifted{f}{\delta} \circ g \simeq_{\dFl} \transmorph{N}{2\delta}(t)$
for every $t\in\Rposet$
then,
\begin{equation}
d_I\big( (H\circ \dFl)(M),  (H \circ\dFl)(N) \big) \leq \delta.
\end{equation}
\end{cor}
Essentially, this tells us that it suffices to $\delta$-interleave $M$ and $N$, up to $\simeq_{\dFl}$ equivalence.
In general, this is a significantly easier task, especially when $M$ and $N$ are on different vertex sets.
Note that we only require a homotopy at each $t\in\Rposet$; the homotopies need not be natural, i.e.\ they need not commute with the structure maps of $M$ and $N$.
In the sequel, we construct explicit examples of such interleavings.

A \mdf{filtration of digraphs on $V$} is a family of digraphs ${\{ G_t \}}_{t \in \R}$ such that $V(G_t)=V$ is constant and if $s \leq t$ then $E(G_s) \subseteq E(G_t)$.
Connecting these graphs via the natural inclusions, we can view a filtration as a functor $G_\bullet: \Rposet \to \ascat{StDgr}$.
Equivalently, we can view such a filtration as a function $\mathrm{ET}(G_\bullet): V \times V \to \R \cup \{\pm \infty\}$, which records the entrance time of each edge:
\begin{equation}
\mathrm{ET}(G_\bullet)(v, w) \defeq \inf \left\{ t\in \R \rmv (v, w) \in E(G_t) \right\}.
\end{equation}

\begin{prop}
Let $G_\bullet, G'_\bullet$ be two filtrations of digraphs on a common vertex set $V$.
Then
\begin{equation}
d_I\big(
    (H \circ \dFl) ( G_\bullet ), (H \circ \dFl) ({G'}_\bullet)
\big)
\leq
\norm{\mathrm{ET}(G_\bullet) - \mathrm{ET}({G'}_\bullet)}_\infty.
\end{equation}
\end{prop}
\begin{proof}
Take arbitrary $\epsilon > 0$ and denote $\delta \defeq
\norm{\mathrm{ET}(G_\bullet) - \mathrm{ET}(G'_\bullet)}_\infty$.
Using the identity vertex map in both directions, it is straightforward to show that $G_\bullet$ and $G'_\bullet$ are $(\delta+\epsilon)$-interleaved.
Lemma~\ref{lem:functors_are_interleaving_lipschitz} applied to the functor $H \circ \dFl$ then implies
$
d_I\big(
    (H \circ \dFl) ( {G}_\bullet ), (H \circ \dFl) ({G'}_\bullet)
\big) \leq \delta + \epsilon
$.
Since $\epsilon$ was arbitrary, we can take $\epsilon \to 0$.
\end{proof}

Note that, in this proof, we construct an interleaving directly in $\ascat{TcDgr}$, rather than in the homotopy category $\ascat{h}_{\dFl}\ascat{TcDgr}$.
In the following, we will see an example where we must use $\Sys[\dFl]$.

\begin{defin}
A \mdf{weighted digraph} is a triple $G=(V, E, w)$ where $(V, E)$ is a digraph and $w: E \to (0, \infty)$ is a positively-valued function, which we call the \mdf{weighting}.
We use $\mdf{w(G)}\defeq w$ to refer to the weighting.
\end{defin}

\begin{defin}
Fix a weighted digraph $G$, with weighting $w\defeq w(G)$.
\begin{enumerate}
\item A \mdf{path} $p$ is a sequence of edges $e_0, \dots, e_k$ such that, writing $e_i=(s_i, t_i)$, then $s_{i+1} = t_i$ for every $i$.
We say $p$ is a path \mdf{from $s_0$ to $t_k$} and write \mdf{$p: s_0 \leadsto t_k$}.
\item The \mdf{length} of such a path is the sum of its weights, $\mdf{\ell(p)}\defeq\sum_{i=0}^k w(e_i)$.
\item
The \mdf{shortest-path filtration} of $G$ is a filtration of digraphs on $V(G)$,
$\mdf{\mathrm{SP}(G)}\defeq {\{ {\mathrm{SP}(G)}_{t} \}}_{t\in \R}$,
where
\begin{equation}
(i, j) \in E({\mathrm{SP}(G)}_t)
\iff
\text{there is a path }p:i\leadsto j\text{ such that }\ell(p)\leq t.
\end{equation}
\item
We say $G$ is \mdf{directed acyclic}, or a \mdf{DAG},
if whenever there is a (non-trivial) path $p:v_0 \leadsto v_1$ then there is not a path $v_1 \leadsto v_0$.
\end{enumerate}
\end{defin}

\begin{rem}
If $G$ is a DAG then $\mathrm{SP}(G)_t$ is oriented for every $t\in\R$.
\end{rem}

Given a weighted digraph, one can obtain a new weighted digraph by subdividing each of its edges.
\begin{defin}
Fix a weighted digraph $G$, with weighting $w\defeq w(G)$.
\begin{enumerate}
\item A \mdf{subdivision} is a map from a subset of edges, $F \subseteq E(G)$, 
into the formal disjoint union of the interiors of the standard simplices,
$
S: F \to \sqcup_{d\in \N} \mathrm{int}(\Delta^d)
$.
\item Given such a subdivision and an edge $e\in F$, let \mdf{$d(e)$} denote the dimension such that $S(e) \in \mathrm{int}(\Delta^{d(e)})$.
Further, denote the components of $S(e)$ by ${S(e)}_0, \dots, {S(e)}_{d(e)}$.
\item The \mdf{subdivision of $G$ by $S$} is a new weighted digraph $\mdf{\wdop{s}{S}{G}}\defeq(V_S, E_S, w_S)$ given by
\begin{align}
V_S &\defeq V(G) \sqcup \bigsqcup_{e \in F} \{v_{e,1}, \dots, v_{e, d(e)} \}, \\
E_S &\defeq \big( E(G)\setminus F \big) \sqcup \bigsqcup_{e \in F} \{\tau_{e,0}, \dots, \tau_{e, d(e)} \}, \\w_S(\tau) & \defeq
\begin{cases}
w(\tau) & \text{if }\tau \in E(G), \\
S(e)_i \cdot w(e) &\text{if }\tau=\tau_{e, i},
\end{cases}
\end{align}
where $\tau_{e, i} = (v_{e, i}, v_{e, i+1})$, and we denote $v_{e, 0} \defeq \st(e)$ and $v_{e, d(e)+1}\defeq \fn(e)$.
\end{enumerate}
\end{defin}

\begin{figure}[phbt]
  \centering
  \begin{tikzpicture}[
  roundnode/.style={circle, fill=black, minimum size=4pt},
	squarenode/.style={fill=black, minimum size=4pt},
	inner sep=2pt,
	outer sep=1pt
  ]
  \node (a) at (0, 0) [roundnode, label=left:$v_0$] {};
  \node (b) at (1, 1) [roundnode, label=above:$v_1$] {};
  \node (c) at (2, 0) [roundnode, label=right:$v_2$] {};

  \node (a2) at (4, 0) [roundnode, label=left:$v_0$] {};
  \node (b2) at (5, 1) [roundnode, label=above:$v_1$] {};
  \node (c2) at (6, 0) [roundnode, label=right:$v_2$] {};

  \node (a2b2_1) at (4.5, 0.5) [roundnode] {};
  \node (b2c2_1) at (5.5, 0.5) [roundnode] {};
  \node (a2c2_1) at (4.666, 0) [roundnode] {};
  \node (a2c2_2) at (5.333, 0) [roundnode] {};

  \draw[->] (a)--(b) node[midway, above, sloped] {\tiny $2$};
  \draw[->] (b)--(c) node[midway, above, sloped] {\tiny $2$};
  \draw[->] (a)--(c) node[midway, below, sloped] {\tiny $3$};

  \draw[->] (a2)--(a2b2_1) node[midway, above, sloped] {\tiny $1$};
  \draw[->] (a2b2_1)--(b2) node[midway, above, sloped] {\tiny $1$};
  \draw[->] (b2)--(b2c2_1) node[midway, above, sloped] {\tiny $1$};
  \draw[->] (b2c2_1)--(c2) node[midway, above, sloped] {\tiny $1$};
  \draw[->] (a2)--(a2c2_1) node[midway, below, sloped] {\tiny $1$};
  \draw[->] (a2c2_1)--(a2c2_2) node[midway, below, sloped] {\tiny $1$};
  \draw[->] (a2c2_2)--(c2) node[midway, below, sloped] {\tiny $1$};

  \node[] at (1, -1) {$G$};
  \node[] at (5, -1) {$\wdop{s}{S}{G}$};
\end{tikzpicture}
   \caption{
  Two weighted digraphs, where weights are denoted by edge annotations.
  The second digraph, $\wdop{s}{S}{G}$, is the result of applying the subdivision $S$ to the first digraph, $G$,
  where $S(v_0, v_2) = (1/3, 1/3, 1/3)$ and $S(v_0, v_1) = S(v_1, v_2) = (1/2, 1/2)$.
  }\label{fig:subdivision}
\end{figure}
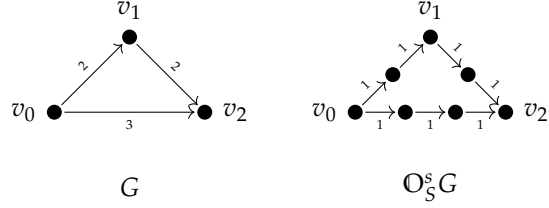

\begin{prop}\label{prop:subdiv_dag_stability}
Let $G$ be a weighted DAG and $S: F \to \sqcup_{d\in \N}\mathrm{int}(\Delta^d)$ a subdivision.
Then
\begin{equation}
d_I\big(
    (H\circ \dFl)(\mathrm{SP}(G))
    ,
    (H\circ \dFl)(\mathrm{SP}(\wdop{s}{S}{G}))
\big)
\leq
\max_{e \in F} w(e).
\end{equation}
\end{prop}
\begin{proof}
Denote $\delta \defeq \max_{e\in F} w(e)$.
By Corollary~\ref{cor:generic_stability}, it suffices to show that $[\mathrm{SP}(G)]_{\dFl}$ and $[\mathrm{SP}(\wdop{s}{S}{G})]_{\dFl}$ are $\delta$-interleaved in $\ascat{h}_{\dFl}\ascat{TcDgr}$.
We construct this interleaving via two vertex maps, as was done in~\cite[Theorem~5.16]{Chaplin2024}.
Let $f:V(G) \to V(\wdop{s}{S}{G})$ be given by the obvious inclusion.
Let $g:V(\wdop{s}{S}{G}) \to V(G)$ be given by the formula
\begin{equation}
g(v)\defeq
\begin{cases}
v & \text{if }v\in V(G), \\
\st(e) & \text{if }v=v_{e, i}\text{ and }\sum_{j < i}S(e)_j < 1/2,\\
\fn(e) & \text{if }v=v_{e, i}\text{ and }\sum_{j < i}S(e)_j \geq 1/2.\\
\end{cases}
\end{equation}
It is easy to see that $f$ does not increase the lengths of paths and hence induces a digraph map $f:{\mathrm{SP}(G)}_{t} \to {\mathrm{SP}(\wdop{s}{S}{G})}_{t}$ at every $t\in \R$.
That is, $f$ induces a natural transformation $\mathrm{SP}(G) \Rightarrow \mathrm{SP}(\wdop{s}{S}{G})$.
Since $\mathrm{SP}(\wdop{s}{S}{G})$ is a filtration, $f$ also induces a natural transformation $\mathrm{SP}(G) \Rightarrow \shifted{\mathrm{SP}(\wdop{s}{S}{G})}{\delta}$.
Note that since $f$ is injective, it is certainly triangle-collapsing.

In the opposite direction, $g$ can increase the length of paths.
In particular, the start of the path may increase by $\delta / 2$ and so too may the end of the path.
Therefore, $g$ induces a digraph map $g:{\mathrm{SP}(\wdop{s}{S}{G})}_{t} \to {\mathrm{SP}(G)}_{t+\delta}$ at every $t\in \R$.
That is, $g$ induces a natural transformation $\mathrm{SP}(\wdop{s}{S}{G}) \Rightarrow \shifted{\mathrm{SP}(G)}{\delta}$.
For more details, see the proof of~\cite[Theorem~5.16]{Chaplin2024}.
Next, the only time we observe $g(x) = g(y)$ with $x \neq y$ is if $x=v_{e,i}$ and $y=v_{e, k}$ for some $e \in F$ and $i < k$.
Moreover, $x$ and $y$ must appear in the same half of a subdivided edge.
If there is a triangle $xvy \in \dFl({\mathrm{SP}(\wdop{s}{S}{G})}_{t})$ then $v$ must appear between $x$ and $y$ in the subdivision of $e$ (since $G$ is directed acyclic).
That is, we must have $v=v_{e, j}$ for some $i < j < k$, in which case $g(v)=g(x)$.
Hence, $g$ is triangle-collapsing.

It remains to show that these transformations constitute a $\delta$-interleaving in $\ascat{h}_{\dFl}\ascat{TcDgr}$.
Notice that, as vertex maps $g \circ f = \id_{V(G)}$ and hence $g \circ f=\transmorph{{\mathrm{SP}(G)}}{2\delta}$. 
In general, it is not the case that $f \circ g = \transmorph{\mathrm{SP}(\wdop{s}{S}{G})}{2\delta}$.
Instead, we will show $f \circ g \simeq_{\dFl} \transmorph{\mathrm{SP}(\wdop{s}{S}{G})}{2\delta}$ at every $t\in \R$.

To prove this $\dFl$-homotopy equivalence, we go via an interim digraph map $h: V(\wdop{s}{S}{G}) \to V(\wdop{s}{S}{G})$, given by
\begin{equation}
h(v)\defeq
\begin{cases}
v & \text{if }v\in V(G), \\
\st(e) & \text{if }v=v_{e, i}\text{ and }\sum_{j < i}S(e)_j < 1/2,\\
v & \text{if }v=v_{e, i}\text{ and }\sum_{j < i}S(e)_j \geq 1/2.\\
\end{cases}
\end{equation}
We first note that this is indeed a digraph map
$h:{\mathrm{SP}(\wdop{s}{S}{G})}_{t} \to {\mathrm{SP}(\wdop{s}{S}{G})}_{t+2\delta}$ because $h$ can only lengthen paths at their beginning, and by at most $\delta/2$.
Moreover, $h$ is triangle-collapsing at every $t\in\R$, by the same argument as $g$.

\begin{claim}
If $x\tooreq y$ in ${\mathrm{SP}(\wdop{s}{S}{G})}_{t}$ then $h(x) \tooreq y$ in ${\mathrm{SP}(\wdop{s}{S}{G})}_{t+2\delta}$.
\end{claim}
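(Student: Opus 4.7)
The plan is a direct case analysis on whether $h$ fixes $x$. By construction of $h$, either $h(x) = x$ (when $x \in V(G)$, or when $x = v_{e,i}$ is a \emph{second-half} subdivision vertex with $\sum_{j<i} S(e)_j \geq 1/2$), or $x = v_{e,i}$ is a \emph{first-half} subdivision vertex, meaning $e \in F$, $i \geq 1$, and $\sum_{j<i} S(e)_j < 1/2$, in which case $h(x) = \st(e) = v_{e,0}$. In the first case, I would simply invoke the inclusion $E({\mathrm{SP}(\wdop{s}{S}{G})}_{t}) \subseteq E({\mathrm{SP}(\wdop{s}{S}{G})}_{t+2\delta})$, so that $h(x) \tooreq y$ follows immediately from the hypothesis $x \tooreq y$.

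For the nontrivial case, I would exploit the directed path $v_{e,0} \to v_{e,1} \to \cdots \to v_{e,i}$ in $\wdop{s}{S}{G}$, whose total length is $L_x \defeq \sum_{j<i} S(e)_j \cdot w(e) < w(e)/2 \leq \delta/2$. If $x \to y$ strictly in ${\mathrm{SP}(\wdop{s}{S}{G})}_t$, there is a path $x \leadsto y$ in $\wdop{s}{S}{G}$ of length at most $t$; concatenating it with the subdivision path above yields a path $\st(e) \leadsto y$ of total length strictly less than $t + \delta/2 \leq t + 2\delta$, producing the required edge $h(x) \to y$ in ${\mathrm{SP}(\wdop{s}{S}{G})}_{t+2\delta}$. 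If instead $x = y$, then $y = v_{e,i} \neq v_{e,0} = h(x)$, and I would deduce the edge $\st(e) \to v_{e,i}$ directly from the subdivision path itself, whose length $L_x < \delta/2 \leq 2\delta$ is absorbed by the $2\delta$ shift in the filtration parameter.

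The whole argument is essentially bookkeeping with path lengths, with the estimate $L_x < \delta/2$ comfortably covered by the $2\delta$ slack built into the shift. I do not anticipate a serious obstacle; the only point requiring care is to keep the two sub-cases of $x \tooreq y$ (equality and strict edge) separate, since the former requires the subdivision path to itself fit under the shifted parameter, while the latter uses it only as a short prefix to an existing path in the filtration.
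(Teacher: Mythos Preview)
Your proposal is correct and follows essentially the same approach as the paper: a case split on whether $h(x)=x$, and in the nontrivial case the use of the short subdivision path $\st(e)\leadsto x$ of length below $\delta/2$, concatenated (when needed) with a witnessing path $x\leadsto y$. The paper organises the two sub-cases in the opposite order (first showing $h(x)\to x$, then treating $x\to y$), but the content is identical.
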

\begin{poc}
Take a vertex $x \in V(\wdop{s}{S}{G})$, then either $h(x) = x$ or $x=v_{e, i}$ for some $e \in F$ and $h(x) = \st(e)$, in which case there is a path of length at most $\delta/2$ in $\wdop{s}{S}{G}$ from $h(x)$ to $x$.
Therefore, $h(x) \to x$ in ${\mathrm{SP}(\wdop{s}{S}{G})}_{t+2\delta}$ for every $t\in \R$.
Suppose now $x \to y$ is an edge in ${\mathrm{SP}(\wdop{s}{S}{G})}_{t}$, i.e.\ there is a path $x\leadsto y$ in $\wdop{s}{S}{G}$ of length at most $t$.
Again, either $h(x) = x$ or there is a path $h(x) \leadsto x$ of length at most $\delta/2$.
Therefore, there is a path $h(x)\leadsto y$ of length at most $t + \delta/2$.
\end{poc}
\begin{claim}\label{claim:subdiv_no_directed_cycle}
There is no edge $x\to y$
in ${\mathrm{SP}(\wdop{s}{S}{G})}_{t}$ such that $h(x) = y$.
\end{claim}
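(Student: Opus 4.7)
The plan is to analyse $h(x)$ case-by-case according to its definition, and then, in the only non-trivial case, exploit the DAG property of $G$.

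First I would observe that $h(x) = x$ whenever $x \in V(G)$, or whenever $x = v_{e,i}$ lies in the second half of the subdivision of some $e \in F$. In either case, an edge $x \to y$ with $h(x) = y$ would force $y = x$; this is a self-loop, which is forbidden since every digraph in this paper is assumed to be simple. Hence no such edge exists in these cases.

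The only remaining case is $x = v_{e,i}$ with $i \geq 1$ and $\sum_{j<i} S(e)_j < 1/2$, so that $h(x) = \st(e)$. Suppose for contradiction that there is an edge $x \to \st(e)$ in ${\mathrm{SP}(\wdop{s}{S}{G})}_{t}$. By the definition of the shortest-path filtration, this produces a directed path $v_{e,i} \leadsto \st(e)$ in $\wdop{s}{S}{G}$. On the other hand, $\wdop{s}{S}{G}$ already contains the directed path $\st(e) = v_{e,0} \to v_{e,1} \to \dots \to v_{e,i}$ formed by the first $i$ sub-edges $\tau_{e,0}, \dots, \tau_{e,i-1}$ of $e$. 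Concatenating these two paths would yield a non-trivial directed cycle in $\wdop{s}{S}{G}$ based at $\st(e)$.

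The main obstacle is then to rule out such a cycle, which I expect to be the only non-trivial step and which reduces to showing that subdividing a DAG again yields a DAG. I would prove this directly: any directed cycle in $\wdop{s}{S}{G}$ is either confined to the interior vertices $v_{e,1}, \dots, v_{e,d(e)}$ of a single subdivided edge, which is impossible since these vertices form a strictly increasing linear chain that cannot close up, or else it visits some vertex of $V(G)$, in which case contracting each traversed subdivided edge back to its parent edge produces a non-trivial directed cycle in $G$, contradicting the DAG assumption on $G$. This contradiction establishes the claim.
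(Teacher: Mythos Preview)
Your proof is correct and follows essentially the same strategy as the paper: produce a directed cycle in $\wdop{s}{S}{G}$ from the hypothetical edge and contradict the DAG assumption. The paper handles all cases uniformly by noting that there is always a (possibly trivial) path $h(x)\leadsto x$, so that $h(x)\leadsto x\leadsto y=h(x)$ is a non-trivial cycle (since $x\neq y$ in a simple digraph); you instead split off the cases $h(x)=x$ explicitly and are more careful than the paper in justifying that $\wdop{s}{S}{G}$ inherits the DAG property from $G$, which the paper invokes without comment.
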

\begin{poc}
Suppose for contradiction there is an edge $x\to y$ is an edge in ${\mathrm{SP}(\wdop{s}{S}{G})}_{t}$ such that $h(x) = y$.
As we have noted before, there is always a path $h(x) \leadsto x$ in $\wdop{s}{S}{G}$, hence there is a composite path $h(x) \leadsto x \leadsto y = h(x)$.
Since $x \neq y = h(x)$, this violates the assumption that $G$ is directed acyclic.
\end{poc}

Hence, there is a one-step $\Sys[\dFl]$-homotopy from $h$ to $\id_{V(\wdop{s}{S}{G})}$, viewed as digraph maps ${\mathrm{SP}(\wdop{s}{S}{G})}_{t} \to {\mathrm{SP}(\wdop{s}{S}{G})}_{t+2\delta}$.

\begin{claim}
If $x\tooreq y$ in ${\mathrm{SP}(\wdop{s}{S}{G})}_{t}$ then $h(x) \tooreq (f\circ g)(y)$ in ${\mathrm{SP}(\wdop{s}{S}{G})}_{t+2\delta}$.
\end{claim}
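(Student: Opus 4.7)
The plan is to perform a case analysis on the vertex $y$, exploiting the precise relationship between $h$ and $f\circ g$: they agree on $V(G)$ and on any first-half subdivision vertex $v_{e,i}$ (both landing on $\st(e)$), while on a second-half subdivision vertex $v_{e,i}$ one has $(f\circ g)(v_{e,i}) = \fn(e)$ whereas $h(v_{e,i}) = v_{e,i}$. Thus $h$ and $f\circ g$ differ only on second-half subdivision vertices, and even there they are joined by a short path within the remainder of the subdivided edge $e$.

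First I would dispatch the easy case, in which either $y \in V(G)$ or $y = v_{e,i}$ lies in the first half of some subdivided edge. In both subcases $(f\circ g)(y) = h(y)$, so since $h$ is already known to be a digraph map ${\mathrm{SP}(\wdop{s}{S}{G})}_{t} \to {\mathrm{SP}(\wdop{s}{S}{G})}_{t+2\delta}$, the hypothesis $x \tooreq y$ immediately yields $h(x) \tooreq h(y) = (f\circ g)(y)$ in ${\mathrm{SP}(\wdop{s}{S}{G})}_{t+2\delta}$, as required.

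The remaining case is when $y = v_{e,i}$ is a subdivision vertex in the second half of some $e \in F$, so that $(f\circ g)(y) = \fn(e)$. The strategy is to concatenate three paths in $\wdop{s}{S}{G}$: first, as noted in the proof of the preceding claim, there is always a path $h(x) \leadsto x$ of length at most $\delta/2$; second, the hypothesis $x \tooreq y$ supplies a path $x \leadsto y$ of length at most $t$ (taken to be empty when $x = y$); and third, the tail $y = v_{e,i} \leadsto v_{e, d(e)} \to \fn(e)$ of length at most $w(e)/2 \le \delta/2$. Concatenation yields a path $h(x) \leadsto \fn(e)$ of total length at most $t + \delta$, from which either $h(x) \to (f\circ g)(y)$ in ${\mathrm{SP}(\wdop{s}{S}{G})}_{t+2\delta}$, or the two vertices coincide and the conclusion holds by the reflexive part of $\tooreq$.

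I do not anticipate any real obstacle: the entire argument is bookkeeping of path lengths, and the budget $t + \delta$ sits comfortably inside the allowed $t + 2\delta$. The only ingredients beyond the definitions of $h$ and $f\circ g$ are the inequality $w(e)/2 \le \delta/2$, which uses $\delta = \max_{e \in F} w(e)$, and the existence of the path $h(x) \leadsto x$ already recorded earlier; notably, the DAG hypothesis on $G$ plays no role in this particular claim, having been consumed earlier in Claim~\ref{claim:subdiv_no_directed_cycle}.
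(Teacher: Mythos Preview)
Your proof is correct, and the underlying idea (bookkeeping of path lengths with at most $\delta/2$ of slack at each endpoint) is the same as the paper's. The decomposition, however, is organised differently.

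The paper splits on whether $x=y$ or $x\to y$. For $x=y$ it observes that either $h(x)=(f\circ g)(x)$ or there is a path $h(x)\leadsto(f\circ g)(x)$ of length at most $\delta/2$. For $x\to y$ it routes through $(f\circ g)(x)$: first $h(x)\leadsto(f\circ g)(x)$ of length $\le\delta/2$, then $(f\circ g)(x)\leadsto(f\circ g)(y)$ of length $\le t+\delta$ (using that $f\circ g$ is $\delta$-shifting), for a total of $t+3\delta/2$. You instead split on the type of $y$ and, in the nontrivial case, route through $x$ and $y$ themselves: $h(x)\leadsto x\leadsto y\leadsto(f\circ g)(y)$ with lengths $\le\delta/2$, $\le t$, $\le\delta/2$, for a total of $t+\delta$. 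Your route is slightly tighter and avoids invoking the $\delta$-shifting property of $f\circ g$, instead reusing directly the already-established facts about $h$; the paper's route is more uniform in that it treats $(f\circ g)$ as a black-box $\delta$-shifting map. Either way the bound lands comfortably inside $t+2\delta$.
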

\begin{poc}
Take a vertex $x \in V(\wdop{s}{S}{G})$, then either $h(x)= (f\circ g)(x)$ or $x = v_{e, i}$ for some $e \in F$ and $h(x) = x$ and $(f \circ g)(x) = \fn(e)$, in which case there is a path of length at most $\delta/2$ in $\wdop{s}{S}{G}$ from $h(x)$ to $(f\circ g)(x)$.
Therefore, $h(x) \to (f\circ g)(x)$ in ${\mathrm{SP}(\wdop{s}{S}{G})}_{t+2\delta}$ for every $t \in \R$.
Suppose now $x \to y$ is an edge in ${\mathrm{SP}(\wdop{s}{S}{G})}_{t}$, i.e.\ there is a path $x\leadsto y$ in $\wdop{s}{S}{G}$ of length at most $t$.
Again, either $h(x) = (f\circ g)(x)$ or there is a path $h(x) \leadsto (f\circ g)(x)$ of length at most $\delta/2$.
Since $(f\circ g)$ increases path lengths by at most $\delta$, there is a path $(f\circ g)(x) \leadsto (f\circ g)(y)$ of length at most $t + \delta$.
Composing these paths yields a path $h(x) \leadsto (f\circ g )(y)$ of length at most $t + 3\delta/2$.
\end{poc}

\begin{claim}
If there is an edge $x \to y$
in ${\mathrm{SP}(\wdop{s}{S}{G})}_{t}$ such that $h(x) = (f\circ g)(y)$
then $h(x)=h(y) = (f\circ g)(x) = (f\circ g)(y)$.
\end{claim}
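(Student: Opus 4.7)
The plan is to split on which of the three cases in the piecewise definition of $f \circ g$ applies to $y$, and to exploit that $h(x) = (f \circ g)(y)$ automatically lies in $V(G)$. Since the only vertices whose image under $h$ lies in $V(G)$ are the elements of $V(G)$ themselves and the first-half subdivision vertices $v_{e', i'}$ (for which $h(v_{e', i'}) = \st(e')$), this already restricts $x$ to one of these two types. The case $y \in V(G)$ is then vacuous: it forces $h(x) = y$, which directly contradicts Claim~\ref{claim:subdiv_no_directed_cycle}.

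If $y = v_{e, i}$ lies in the first half of the subdivision of $e$, then $(f \circ g)(y) = \st(e) = h(y)$, so the hypothesis becomes $h(x) = \st(e)$. Unfolding the definition of $h$, either $x = \st(e)$, so $(f\circ g)(x) = \st(e)$; or $x = v_{e', i'}$ is a first-half vertex with $\st(e') = \st(e)$, so $(f\circ g)(x) = \st(e') = \st(e)$. In both sub-cases all four quantities collapse to $\st(e)$ and the conclusion holds, requiring no path analysis.

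If instead $y = v_{e, i}$ lies in the second half, then $(f \circ g)(y) = \fn(e)$, so $h(x) = \fn(e)$, leaving only $x = \fn(e)$ or $x = v_{e', i'}$ a first-half vertex of some edge $e'$ with $\st(e') = \fn(e)$. In each of these sub-cases my plan is to trace a path $x \leadsto y$ in $\wdop{s}{S}{G}$ witnessing the edge $x \to y$ in ${\mathrm{SP}(\wdop{s}{S}{G})}_t$: because $v_{e, i}$ has a unique in-edge in $\wdop{s}{S}{G}$ (from its predecessor in the subdivision), the path must enter the subdivision of $e$ at $\st(e)$; and when $x = v_{e', i'}$, it must first exit the subdivision of $e'$ at $\fn(e')$ before touching any $G$-vertex. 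Collapsing each traversed subdivision back to its original edge yields a directed walk $\fn(e) \leadsto \st(e)$ in $G$ which, composed with the edge $e$, gives a non-trivial directed cycle and contradicts the DAG hypothesis, exactly as in the proof of Claim~\ref{claim:subdiv_no_directed_cycle}. The main obstacle is this last sub-case: one must carefully argue that however intricately the path from $v_{e', i'}$ to $v_{e, i}$ is routed through intermediate subdivisions, the forced entry and exit points really do project to a genuine cycle in $G$; everything else is a direct unpacking of the three-part definitions of $h$ and $f \circ g$.
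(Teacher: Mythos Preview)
Your proof is correct, but it takes a more explicit, case-based route than the paper's. The paper avoids splitting on the type of $y$ by making one unified observation: for every vertex $v$ there is always a (possibly trivial) path $h(v)\leadsto v$ in $\wdop{s}{S}{G}$. Combining this with the edge $x\to y$ yields a nontrivial path $(f\circ g)(y)=h(x)\leadsto x\leadsto y$, and the DAG hypothesis then forbids any nontrivial path $y\leadsto(f\circ g)(y)$. Since a second-half vertex $y=v_{e,i}$ always admits the path $y\leadsto\fn(e)=(f\circ g)(y)$, the case $y$ second-half is eliminated in one stroke; the remaining possibilities automatically satisfy $h(y)=(f\circ g)(y)$. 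A parallel observation---that $h(x)=(f\circ g)(y)\in V(G)$ forces $x$ not to be second-half, hence $h(x)=(f\circ g)(x)$---finishes the argument without ever tracing a specific path through subdivisions.

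Your approach has the merit of being entirely concrete: each case is verified by direct inspection of the piecewise definitions, and the only nontrivial work is the path-tracing in the second-half case, which you correctly identify as the crux. The paper's approach buys brevity and symmetry (it never needs to invoke the earlier claim about $h(x)=y$, nor distinguish $y\in V(G)$ from $y$ first-half), at the cost of a slightly slicker use of the DAG property. Both arguments ultimately rest on the same acyclicity obstruction; the paper just packages it once rather than three times.
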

\begin{poc}
First note that for any node $v\in V(\wdop{s}{S}{G})$,  $(f\circ g)(v) = h(v)$ if and only if $v\in V(G)$ or $v$ appears in the first half of its subdivided edge (i.e.\ 
 $v = v_{e, i}$ for some $e \in F$ and $\sum_{j< i} S(e)_j \geq 1/2$)

As we saw in the previous claims, there must be a path $h(x) \leadsto x$ and hence there is a path $(f \circ g)(y) \leadsto x \leadsto y$.
Since $G$ is directed acyclic this means that there cannot be a path $y \leadsto (f\circ g)(y)$.
Hence, $y$ cannot be a new node in the second half of a subdivided edge.
Therefore, $h(y) = (f\circ g)(y)$.
Finally, note that $h(x) = (f\circ g)(y)$ is a node in $V(G)$, so $x$ also cannot be a new node in the second half of a subdivided edge since these are mapped to $V(\wdop{s}{S}{G})\setminus V(G)$ by $h$.
Again, this implies $h(x) = (f\circ g)(x)$.
\end{poc}

Hence, viewed as digraph maps ${\mathrm{SP}(\wdop{s}{S}{G})}_{t} \to {\mathrm{SP}(\wdop{s}{S}{G})}_{t+2\delta}$, there is a one-step $\Sys[\dFl]$-homotopy from $h$ to $(f \circ g)$.
\end{proof}

\subsection{Instabilities}\label{sec:instabilities}

Despite these stability guarantees, there are many seemingly innocuous edits one can make to a weighted digraph that drastically alter the persistent homology of its directed flag complex.
As a first example, as a partial converse to Proposition~\ref{prop:subdiv_dag_stability}, we exhibit a non-DAG which is not stable to subdivision.

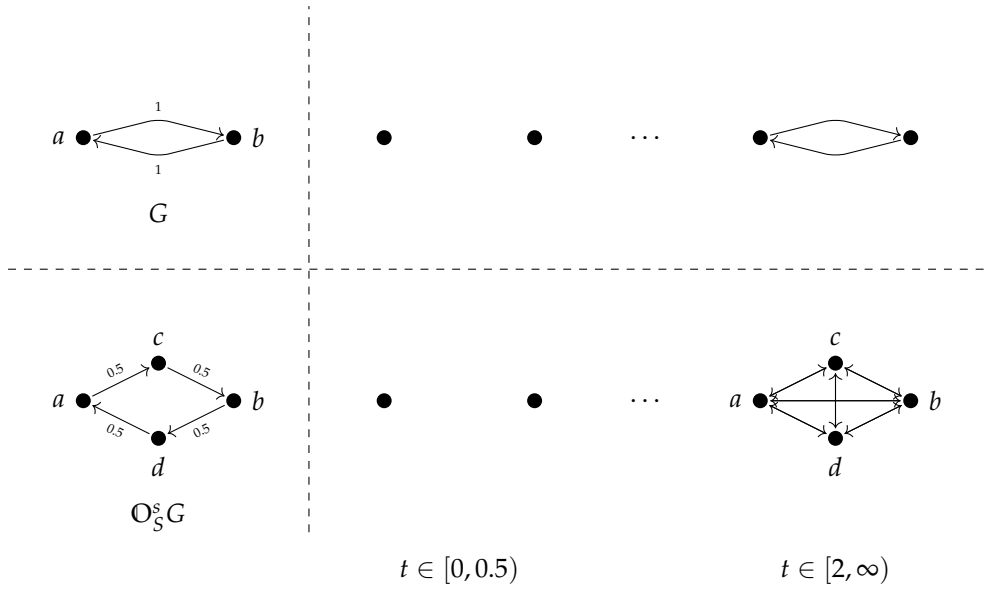
\begin{figure}[phbt]
  \centering
  \begin{tikzpicture}[
  roundnode/.style={circle, fill=black, minimum size=4pt},
	squarenode/.style={fill=black, minimum size=4pt},
	inner sep=2pt,
	outer sep=1pt
  ]

\node (a) at (0, 0.25) [roundnode, label=left:$a$] {};
  \node (b) at (2, 0.25) [roundnode, label=right:$b$] {};

  \draw[-> , rounded corners] (a) -- (1, 0.5) node[above, sloped] {\tiny $1$} -- (b);
  \draw[-> , rounded corners] (b) -- (1, 0) node[below, sloped] {\tiny $1$} -- (a);

\node (a) at (4, 0.25) [roundnode] {};
  \node (b) at (6, 0.25) [roundnode] {};

  \node at (7.5, 0.25) {$\dots$};

\node (a) at (9, 0.25) [roundnode] {};
  \node (b) at (11, 0.25) [roundnode] {};

  \draw[-> , rounded corners] (a) -- (10, 0.5) node[above, sloped] {} -- (b);
  \draw[-> , rounded corners] (b) -- (10, 0) node[below, sloped] {} -- (a);

  \node at (1, -0.75) {$G$};

\node (a) at (0, -3.25) [roundnode, label=left:$a$] {};
  \node (b) at (2, -3.25) [roundnode, label=right:$b$] {};
  \node (c) at (1, -2.75) [roundnode, label=above:$c$] {};
  \node (d) at (1, -3.75) [roundnode, label=below:$d$] {};

  \draw[-> , rounded corners] (a) -- (c) node[above, midway, sloped] {\tiny $0.5$};
  \draw[-> , rounded corners] (c) -- (b) node[above, midway, sloped] {\tiny $0.5$};
  \draw[-> , rounded corners] (b) -- (d) node[below, midway, sloped] {\tiny $0.5$};
  \draw[-> , rounded corners] (d) -- (a) node[below, midway, sloped] {\tiny $0.5$};

\node (a) at (4, -3.25) [roundnode] {};
  \node (b) at (6, -3.25) [roundnode] {};

  \node at (7.5, -3.25) {$\dots$};

\node (a) at (9, -3.25) [roundnode, label=left:$a$] {};
  \node (b) at (11, -3.25) [roundnode, label=right:$b$] {};
  \node (c) at (10, -2.75) [roundnode, label=above:$c$] {};
  \node (d) at (10, -3.75) [roundnode, label=below:$d$] {};

  \draw[->] (a) to  (b);
  \draw[->] (a) to  (c);
  \draw[->] (a) to  (d);
  \draw[->] (b) to  (c);
  \draw[->] (b) to  (d);
  \draw[->] (c) to  (d);

  \draw[->] (b) to  (a);
  \draw[->] (c) to  (a);
  \draw[->] (d) to  (a);
  \draw[->] (c) to  (b);
  \draw[->] (d) to  (b);
  \draw[->] (d) to  (c);

  \node at (1, -4.8) {$\wdop{s}{S}{G}$};

  \draw[dashed] (3, -5) -- (3, 2);
  \draw[dashed] (-1, -1.5) -- (12, -1.5);
  
  \node at (5, -5.5)  {$t\in [0, 0.5)$};
  \node at (10, -5.5) {$t\in [2, \infty)$};

\end{tikzpicture}
   \caption{
  A weighted digraph which, upon subdividing, yields a persistence module at $\infty$ interleaving distance.
  }\label{fig:instability_2}
\end{figure}

\begin{prop}
There exists a weighted digraph $G$ and a subdivision $S: E(G) \to \sqcup_{d\in \N} \Delta^{d}$ such that
\begin{equation}
d_I \big(
(H \circ \dFl)(\mathrm{SP}(G)) ,
  (H\circ \dFl)(\mathrm{SP}(\wdop{s}{S}{G}))
  \big) = \infty.
\end{equation}
\end{prop}
\begin{proof}
Consider the two weighted digraphs illustrated in Figure~\ref{fig:instability_2}.
The second weighted digraph is obtained from the first via the subdivision $S(a, b) = S(b,a) = (1/2, 1/2)$.
The first module, $(H \circ \dFl)(\mathrm{SP}(G))$, has non-trivial homology in degree 1 for $t\in [1, \infty)$.
For proof of this, recall the computations done in the proof of Proposition~\ref{prop:dflag_func_failure}.
In contrast,
$\mathrm{SP}(\wdop{s}{S}{G})_t$ is the complete digraph on $4$ nodes, for all $t\in[2, \infty)$.
Via a choice of basis for $\ker\bd_1$ or through explicit computation (i.e.\ Flagser~\cite{Luetgehetmann2020}), one can verify that this digraph has trivial homology in degree $1$.
Rank constraints imply that there is no $\epsilon$-interleaving between these modules for any $\epsilon\geq 0$.
\end{proof}

An arguably worse stability is that adding a single edge can also lead to a change in the persistent homology that is unbounded.

\begin{figure}[hbtp]
  \centering
  \begin{tikzpicture}[
  roundnode/.style={circle, fill=black, minimum size=4pt},
	squarenode/.style={fill=black, minimum size=4pt},
	inner sep=2pt,
	outer sep=1pt
  ]

\node (a) at (0, 0.25) [roundnode, label=left:$a$] {};
  \node (b) at (2, 0.25) [roundnode, label=right:$b$] {};

  \draw[-> , rounded corners] (a) -- (1, 0.5) node[above, sloped] {\tiny $1$} -- (b);
  \draw[-> , rounded corners] (b) -- (1, 0) node[below, sloped] {\tiny $1$} -- (a);

\node (a) at (4, 0.25) [roundnode] {};
  \node (b) at (6, 0.25) [roundnode] {};

\node (a) at (8, 0.25) [roundnode] {};
  \node (b) at (10, 0.25) [roundnode] {};

  \draw[-> , rounded corners] (a) -- (9, 0.5) node[above, sloped] {} -- (b);
  \draw[-> , rounded corners] (b) -- (9, 0) node[below, sloped] {} -- (a);

\node (a) at (12, 0.25) [roundnode] {};
  \node (b) at (14, 0.25) [roundnode] {};

  \draw[-> , rounded corners] (a) -- (13, 0.5) node[above, sloped] {} -- (b);
  \draw[-> , rounded corners] (b) -- (13, 0) node[below, sloped] {} -- (a);

  \node at (1, -0.75) {$G$};

\node (a) at (0, -4) [roundnode, label=left:$a$] {};
  \node (b) at (2, -4) [roundnode, label=right:$b$] {};
  \node (c) at (1, -2.5) [roundnode, label=above:$c$] {};

  \draw[-> , rounded corners] (a) -- (1, -3.75) node[above, sloped] {\tiny $1$} -- (b);
  \draw[-> , rounded corners] (b) -- (1, -4.25) node[below, sloped] {\tiny $1$} -- (a);
  \draw[->] (c) -- (a) node[midway, above, sloped] {\tiny $1$};

\node (a) at (4, -4) [roundnode] {};
  \node (b) at (6, -4) [roundnode] {};
  \node (c) at (5, -2.5) [roundnode, label] {};

\node (a) at (8, -4) [roundnode] {};
  \node (b) at (10, -4) [roundnode] {};
  \node (c) at (9, -2.5) [roundnode, label] {};

  \draw[-> , rounded corners] (a) -- (9, -3.75) node[above, sloped] {} -- (b);
  \draw[-> , rounded corners] (b) -- (9, -4.25) node[below, sloped] {} -- (a);
  \draw[->] (c) -- (a);

\node (a) at (12, -4) [roundnode] {};
  \node (b) at (14, -4) [roundnode] {};
  \node (c) at (13, -2.5) [roundnode, label] {};

  \draw[-> , rounded corners] (a) -- (13, -3.75) node[above, sloped] {} -- (b);
  \draw[-> , rounded corners] (b) -- (13, -4.25) node[below, sloped] {} -- (a);
  \draw[->] (c) -- (a);
  \draw[->] (c) -- (b);

  \node at (1, -4.8) {$G'$};

  \draw[dashed] (3, -5) -- (3, 2);
  \draw[dashed] (-1, -1.5) -- (15, -1.5);
  
  \node at (5, -5.5)  {$t\in [0, 1)$};
  \node at (9, -5.5)  {$t\in [1, 2)$};
  \node at (13, -5.5) {$t\in [2, \infty)$};

\end{tikzpicture}
   \caption{
  A weighted digraph which, upon adding a single appendage, yields a persistence module at $\infty$ interleaving distance.
  }\label{fig:instability_1}
\end{figure}
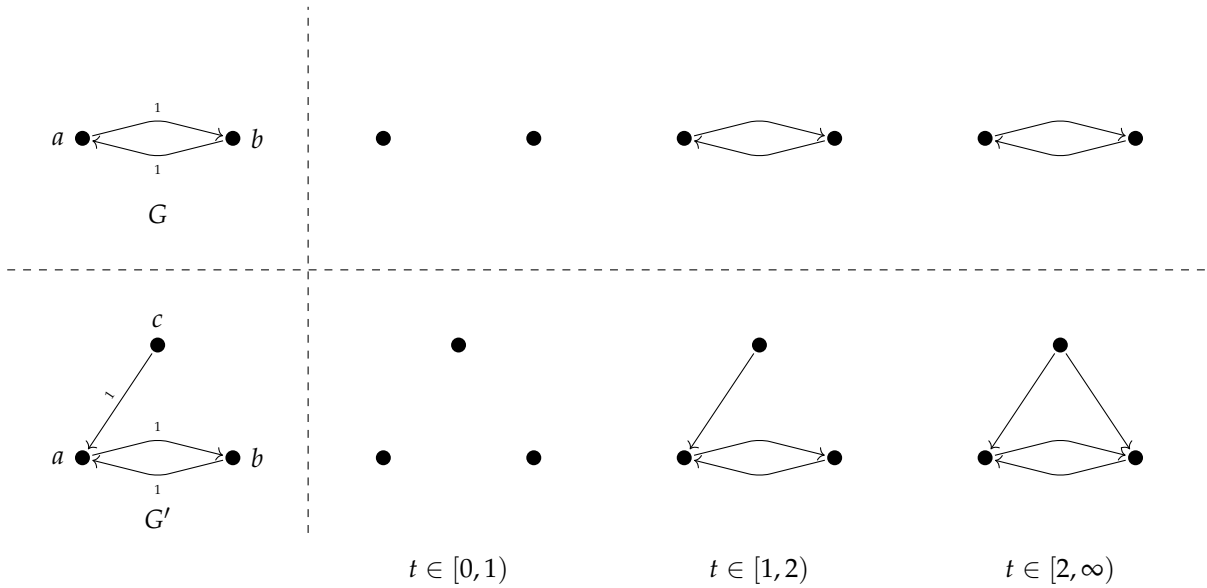

\begin{prop}
There exists a weighted digraph $G$ such that adding an appendage edge yields a weighed digraph $G'$ such that
\begin{equation}
d_I \big(
(H \circ \dFl)(\mathrm{SP}(G)) ,
  (H\circ \dFl)(\mathrm{SP}(G'))
  \big) = \infty.
\end{equation}
\end{prop}
\begin{proof}
Consider the two weighted digraphs illustrated in Figure~\ref{fig:instability_1}.
The first module, $(H \circ \dFl)(\mathrm{SP}(G))$, has non-trivial homology in degree 1 for $t\in [1, \infty)$.
In contrast, the second module,
$(H\circ \dFl)(\mathrm{SP}(G'))$,
has trivial homology in degree 1 for $t\in[2, \infty)$.
For proof of this, recall the computations done in the proof of Proposition~\ref{prop:dflag_func_failure}.
Rank constraints imply that there is no $\epsilon$-interleaving between these modules for any $\epsilon\geq 0$.
\end{proof}

In contrast, the module $(H\circ\mathcal{A})(\mathrm{SP}(G))$ is invariant to adding appendages, because a $0$-interleaving at the homotopy level can be constructed via Proposition~\ref{prop:retract_a_to_b0}.

 \appendix
\section{Mapping cylinders}

\begin{defin}
Given a weak path morphism $f: P_1 \to P_2$, the \mdf{mapping cylinder of $f$}, $\mdf{\MapCyl(f)}$, is a path complex on $(V(P_1)\times\{0\}) \cup (V(P_2)\times\{1\})$ containing paths of the following form
\begin{enumerate}
    \item $(x_0, 0) \dots (x_k, 0)$ such that $x_0 \dots x_k \in P_1$;
    \item $(y_0, 1) \dots (y_k, 1)$ such that $y_0 \dots y_k \in P_2$;
    \item $(x_0, 0) \dots (x_i, 0) (f(x_i), 1) \dots (f(x_k), 1)$ such that $x_0 \dots x_k \in P_1$ and $0\leq i \leq k$.\end{enumerate}
\end{defin}

\begin{rem}
Let $f: P_1 \to P_2$ be a weak path morphism of regular path complexes.
The mapping cylinder, $\MapCyl(f)$ is itself a regular path complex if and only if $f$ is also a strong path morphism.
\end{rem}

Note that for any path complex $P$, $\MapCyl(\id_P) = \Cyl(P)$.
We verify that mapping cylinders satisfy a universal property, analogous to the usual one in the category of topological spaces.

\begin{prop}\label{prop:mapping_cylinder_pushout}
Given a weak path morphism $f: P_1 \to P_2$, let $\iota_1: P_1 \to \Cyl(P_1)$ denote the natural inclusion $x \mapsto (x, 1)$.
Then $\MapCyl(f)$ is the pushout of $f$ and $\iota_1$ in $\ascat{WkPathC}$.
\end{prop}
\begin{proof}
First, define two vertex maps $\phi_1: P_2 \to \MapCyl(f)$ and $\phi_2: \Cyl(P_1) \to \MapCyl(f)$ by
\begin{equation}
\phi_1(y) \defeq (y, 1)
\quad\text{and}\quad
\phi_2(x, i) \defeq
\begin{cases}
(x, 0) & \text{if }i=0, \\
(f(x), 1) & \text{if }i=1.
\end{cases}
\end{equation}
These maps clearly describe weak path morphisms which make the following diagram commute
\begin{equation}
\begin{tikzcd}
P_1 \arrow[r, "f"] \arrow[d, "\iota_1"'] & P_2 \arrow[d, "\phi_1"] \\
\Cyl(P_1) \arrow[r, "\phi_2"'] & \MapCyl(f)
\end{tikzcd}
\end{equation}
Given another commuting diagram
\begin{equation}\label{dgm:pushout_psi_commutes}
\begin{tikzcd}
P_1 \arrow[r, "f"] \arrow[d, "\iota_1"'] & P_2 \arrow[d, "\psi_1"] \\
\Cyl(P_1) \arrow[r, "\psi_2"'] & Z
\end{tikzcd}
\end{equation}
in $\ascat{WkDgr}$,
define $\phi: \MapCyl(f) \to Z$ by
$\phi(x, 0)\defeq \psi_2(x, 0)$
and
$\phi(y, 1)\defeq \psi_1(y)$.
As vertex maps, $\phi$ is clearly the unique map that makes the following diagram commute
\begin{equation}
\begin{tikzcd}
P_1 \arrow[r, "f"] \arrow[d, "\iota_1"'] & P_2 \arrow[d, "\phi_1"] \arrow[ddr, bend left, "\psi_1"] & \\
\Cyl(P_1) \arrow[r, "\phi_2"'] \arrow[drr, bend right, "\psi_2"'] & \MapCyl(f) \arrow[rd, dashed, "\phi"] & \\
& & Z
\end{tikzcd}
\end{equation}
It remains to verify that $\phi$ is a weak path morphism.
Given a path $p\in\MapCyl(f)$, we split into three cases:

\textbf{Case 1:} Suppose $p = (x_0, 0) \dots (x_k, 0)$ where $x_0 \dots x_k \in P_1$.
Let $p'$ denote the same path but viewed as an element of $\Cyl(P_1)$ so that $\phi_2(p')=p$.
Then, $\phi(p) = \psi_2(p')$ must be either irregular or a path in $Z$ since $\psi_2$ is a weak path morphism.

\textbf{Case 2:} Suppose $p=(y_1, 1) \dots (y_k, 1)$ where $p'\defeq y_0 \dots y_k \in P_2$.
 Then $p = \phi_1(p')$ so $\phi(p) = \psi_1(p')$ is either irregular or a path in $Z$ since $\psi_1$ is a weak path morphism.

\textbf{Case 3:}
Suppose $p=(x_0, 0) \dots (x_i, 0) (f(x_i), 1) \dots (f(x_k), 1)$ where $x_0 \dots x_k \in P_1$ and $0 \leq i \leq n$.
Then let 
$p' \defeq (x_0, 0) \dots (x_i, 0) (x_i, 1) \dots (x_k, 1)\in \Cyl(P_1)$,
and observe
\begin{align}
\phi(p) &= \psi_2(x_0, 0) \dots \psi_2(x_i, 0) \psi_1(f(x_i)) \dots \psi_1(f(x_k)) \\
        &= \psi_2(x_0, 0) \dots \psi_2(x_i, 0) \psi_2(x_i, 1) \dots \psi_2(x_k, 1) \\
        &= \psi_2(p')
\end{align}
where the second equality holds because diagram (\ref{dgm:pushout_psi_commutes}) commutes.
Now, since $\psi_2$ is a weak path morphism, we see that $\phi(p)$ must be irregular or a path in $Z$.
\end{proof}

\begin{prop}\label{prop:homotopy_for_path_complex_cylinders}
Given a weak path morphism $f: P_1 \to P_2$, $\MapCyl(f) \simeq P_2$.
\end{prop}
\begin{proof}
Let $\gamma : P_2 \to \MapCyl(f)$ denote the natural inclusion $y\mapsto (y, 1)$ which is certainly a weak path morphism.
Let $\rho: \MapCyl(f) \to P_2$ be given by $(x, 0)\mapsto f(x)$ and $(y, 1)\mapsto y$.
Again this is a weak path morphism because paths of the form $(x_0, 0) \dots (x_i, 0) (f(x_i), 1) \dots (f(x_k), 1) \in \MapCyl(f)$ have an irregular image.
Note that $\rho \circ \gamma = \id_{P_2}$.
It remains to show that $\gamma \circ \rho \simeq \id_{\MapCyl(f)}$; we construct the homotopy explicitly.

We shall denote vertices of $\Cyl(\MapCyl(f))$ by $(v, \alpha, \beta)$ where $\alpha$ is the index for the inner mapping cylinder and $\beta$ is the index for outer cylinder.
Let $F: \Cyl(\MapCyl(f)) \to \MapCyl(f)$ be given by
\begin{equation}
F(v, \alpha, 0) \defeq (v, \alpha)
\quad\text{and}\quad
F(v, \alpha, 1) \defeq
\begin{cases}
(f(v), 1) & \text{if }\alpha=0,\\
(v, 1) & \text{if }\alpha=1.
\end{cases}
\end{equation}
There are inclusion $\iota_i: \MapCyl(f) \hookrightarrow \Cyl(\MapCyl(f))$, for $i=0, 1$, given by $\iota_i(v, \alpha) = (v, \alpha, i)$.
Note that
$F \circ \iota_0 = \id_{\MapCyl(f)}$
and 
$F \circ \iota_1  = \gamma \circ \rho$.
It remains to show that $F$ is a weak path morphism.

Take an arbitrary path $p\in\MapCyl(f)$, and denote the vertices
$
p = (x_0, \alpha_0) \dots (x_k, \alpha_k)
$.
Now consider the path
\begin{equation}
p_j \defeq (x_0, \alpha_0, 0) \dots (x_j, \alpha_j, 0) (x_j, \alpha_j, 1) \dots (x_k, \alpha_k, 1)\in \Cyl(\MapCyl(f))
\end{equation}
for some $j \in [0, k]$.
Note that the $\alpha_i$ are either constant at $0$ or $1$, or there is some index $m$ such that $\alpha_i = 0$ for $i\leq m$ and $\alpha_i = 1$ for $i > m$.
We split into cases.

\textbf{Case 1:} Suppose $\alpha_i = 0$ for every $i$.
In this case, note that $x_0 \dots x_k \in P_1$.
Then,
\begin{equation}
F(p_j) = (x_0, 0) \dots (x_j, 0) (f(x_j), 1) \dots (f(x_k), 1)
\end{equation}
which is a path in $\MapCyl(f)$.

\textbf{Case 2:} Suppose $\alpha_i = 1$ for every $i$.
In this case, $x_0 \dots x_k \in P_2$ and
\begin{equation}
F(p_j) = (x_0, 1) \dots (x_j, 1) (x_j, 1) \dots (x_k, 1) 
\end{equation}
which is clearly irregular.

\textbf{Case 3:} Suppose $\alpha_i = 0$ for $i\leq m$ and $\alpha_i=1$ for $i > m$ where $m < j$.
In this case note that $\alpha_j=1$ and hence
\begin{equation}
F(x_j, \alpha_j, 0) = (x_j, 1) = F(x_j, \alpha_j, 1).
\end{equation}
Therefore, $F(p_j)$ must be irregular.

\textbf{Case 4:} Suppose $\alpha_i = 0$ for $i\leq m$ and $\alpha_i=1$ for $i > m$ such that $m \geq j$.
In this case, note that $p_j$ contains adjacent vertices $(x_m, 0, 1)(x_{m+1}, 1, 1)$.
Moreover, since $p$ is a path in $\MapCyl(f)$ we must have $x_{m+1} = f(x_m)$.
Mapping through $F$ we see
\begin{equation}
F(x_m, 0, 1) = (f(x_m), 1) = (x_{m+1}, 1) = F(x_{m+1}, 1, 1)
\end{equation}
and thus $F(p_j)$ is irregular.

All other paths $q\in \Cyl(\MapCyl(f))$ are of the form $\iota_i(q')$ for some $i\in \{0, 1\}$ and $q'\in \MapCyl(f)$.
If $i=0$ then $F(q) = (F \circ \iota_0)(q') = q'$;
if $i=1$ then $F(q) = (F\circ \iota_1)(q') = (\gamma \circ \rho)(q')$.
Since $\gamma$ and $\rho$ are weak path morphisms, we see either $F(q) \in \MapCyl(f)$ or $F(q)$ is irregular.
\end{proof}

A similar result holds for triangle-collapsing simplicial morphisms.
It is possible to show this either by adapting the proof above, or by appealing to the connection with simplicial sets.

\begin{prop}\label{prop:homotopy_for_osc_cylinders}
Given a triangle-collapsing simplicial morphism $f: K_1 \to K_2$, $\overline{\MapCyl(f)} \simeq K_2$.
\end{prop}
\begin{proof}
By Corollary~\ref{cor:sset_homotopies_same}, the system of one-step homotopies for $\ascat{TcOSC}$ is a pull-back of the standard system for simplicial sets, $\Sys[\ascat{TcOSC}]=\sigma^\ast \Sys[\ascat{sSet}]$.
Using Proposition~\ref{prop:mapping_cylinder_pushout}, we can also show $\overline{\MapCyl(f)}$ is a pushout in $\ascat{TcOSC}$ and hence $\sigma(\overline{\MapCyl(f)})$ is the usual mapping cylinder for $\sigma(f)$.
The result then follows from the homotopy equivalence $\sigma(\overline{\MapCyl(f)})\simeq\sigma(K_2)$ in simplicial sets~\cite[Proposition~2.68]{ruschoff65lecture}.
\end{proof}

 \section{Grounded pipelines}

\subsection{Background}

This work was initially motivated by the stability analysis of grounded pipelines for weighted digraphs~\cite{Chaplin2024}.
These pipelines require two components:
\begin{enumerate}
\item a filtration map, $F:\mathrm{WDgr} \to \Obj(\Funct{\Rposet}{\Dgr})$, which assigns a filtration of digraphs to each weighted digraph; and
\item a digraph chain complex, i.e.\ a functor $C:\ascat{InclDgr} \to \ascat{Ch}$, where $\ascat{InclDgr}$ is the wide subcategory of $\ascat{WkDgr}$ restricted to morphisms which are inclusions.
\end{enumerate}
Given these complexes, one obtains the following commutative diagram for any weighted digraph $G$.
\begin{equation}
\begin{tikzcd}[row sep=small, column sep=small]
    \cdots C_3(F(G)(t) \arrow[r, "\partial_3"] &
    C_2(F(G)(t)) \arrow[rr, "\inducedch{\iota}\circ\bd_2"] \arrow[rd, "\bd_2"', dotted] & &
    C_1(G\cup F(G)(t)) \arrow[r, "\bd_1"] & 
    C_0(G\cup F(G)(t)) \cdots \\
   & & C_1(F(G)(t))\arrow[ru, "\inducedch{\iota}"', dotted, hook]
   \arrow[r, dotted, "\bd_1"']
   & C_0(F(G)(t) )\arrow[ru, dotted, "\inducedch{\iota}"', hook] &
\end{tikzcd}
\end{equation}
The top row of this diagram is a chain complex, which we denote 
\mdf{$\zb{C}_F(G)(t)$}.
By~\cite[Lemma~3.9]{Chaplin2024}, the functoriality of $C$ ensures that this can be made into a persistent chain complex $\zb{C}_F(G)\in\Funct{\Rposet}{\ascat{Ch}}$.
The persistent homology of this can then be used as a topological summary of $G$.

The initial study~\cite{Chaplin2024} considered this pipeline with $F=\mathrm{SP}$ and $C=\Omega \circ \mathcal{A}$.
\textbf{In this section, we take $F=\mathrm{SP}$ and $C=\Omega\circ\dFl$ and study the resulting pipeline.}
The assignment $G\mapsto \zb{C}_F(G)$ can be made functorial, but we must restrict to morphisms between weighted digraphs that induce triangle-collapsing morphisms $\mathrm{SP}(G)_t \to \mathrm{SP}(H)_t$.

\begin{defin}
Given a weighted digraph $G$, the \mdf{shortest-path quasimetric} is a quasimetric on $V(G)$ given for $i, j \in V(G)$ by
\begin{equation}
d_G(i, j) \defeq \inf \left\{ t\geq 0 \rmv \text{there is a path }p:i\leadsto j\text{ with }\ell(p) \leq t\right\}.
\end{equation}
Given two weighted digraphs $G, H$,
a vertex map $f: V(G) \to V(H)$ is called a \mdf{contraction} if
\begin{equation}
d_H(f(i), f(j)) \leq d_G(i, j)
\end{equation}
for all $i, j \in V(G)$.
\end{defin}

Note that if $f$ is a contraction then it induces a weak digraph map $\mathrm{SP}(G)_t \to \mathrm{SP}(H)_t$ for every $t\in \R$.

\begin{defin}
A digraph map $f: G \to H$ is \mdf{path-collapsing} if whenever there are paths $i \leadsto j \leadsto k$ and $f(i) = f(k)$ then $f(i) = f(j) = f(k)$.
\end{defin}

Note that any path-collapsing digraph map is a triangle-collapsing map.
Moreover, if $f: G \to H$ is path-collapsing and a contraction then it induces triangle-collapsing digraphs maps $\mathrm{SP}(G)_t \to \mathrm{SP}(H)_t$ for every $t\in \R$.
With these observations, we can deduce the functoriality of this pipeline.

\begin{defin}
Let \mdf{$\ascat{ContPcWDgr}$} denote the category of all simple, weighted digraphs where morphisms are vertex maps which are both contractions and path-collapsing digraph maps.
\end{defin}

\begin{lemma}
The assignment $G\mapsto\zb{C}_F(G)$ can be made into a functor $\ascat{ContPcWDgr} \to \Funct{\Rposet}{\ascat{Ch}}$.
\end{lemma}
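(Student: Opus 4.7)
The plan is to verify functoriality by assembling three ingredients: (i) $\zb{C}_F(G)(t)$ is a chain complex for each $t$, (ii) these assemble into a persistent chain complex, and (iii) morphisms in $\ascat{ContPcWDgr}$ induce natural transformations. The essential structural work was carried out in~\cite[Lemma~3.9]{Chaplin2024} for $C = \Omega \circ \mathcal{A}$; the task is to verify the analogous arguments go through with $C = \Omega \circ \dFl$, the main new content being the correct interaction between path-collapsing maps and the triangle-collapsing requirement of Lemma~\ref{lem:dfl_functoriality}.

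First, I would check that $\zb{C}_F(G)$ is itself a persistent chain complex. For each $t \in \R$, the inclusion $\mathrm{SP}(G)_t \hookrightarrow G \cup \mathrm{SP}(G)_t$ is a strong, hence triangle-collapsing, digraph map, so by Lemma~\ref{lem:dfl_functoriality} and functoriality of $\Omega$ it induces a chain map on $\Omega\circ\dFl$. Splicing this chain map onto $\Omega(\dFl(\mathrm{SP}(G)_t))$ in degrees $\geq 2$ produces the chain complex $\zb{C}_F(G)(t)$. For $s \leq t$, the inclusions $\mathrm{SP}(G)_s \hookrightarrow \mathrm{SP}(G)_t$ and $G \cup \mathrm{SP}(G)_s \hookrightarrow G \cup \mathrm{SP}(G)_t$ are likewise strong digraph maps; the two resulting chain maps commute with the connecting boundary map by functoriality of $\Omega \circ \dFl$, and so assemble into a chain map $\zb{C}_F(G)(s) \to \zb{C}_F(G)(t)$. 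These are the structure maps of $\zb{C}_F(G) \in \Funct{\Rposet}{\ascat{Ch}}$.

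Next, I would verify the action on morphisms. Let $f: G \to H$ be a morphism in $\ascat{ContPcWDgr}$. Because $f$ is a contraction, it induces a weak digraph map $f_t: \mathrm{SP}(G)_t \to \mathrm{SP}(H)_t$ for every $t$. To upgrade this to triangle-collapsing, suppose $v_0 v_1 v_2$ is a directed $3$-clique in $\mathrm{SP}(G)_t$ with $f(v_0) = f(v_2)$; then there exist $G$-paths $v_0 \leadsto v_1$ and $v_1 \leadsto v_2$ (of length at most $t$) which concatenate to a path $v_0 \leadsto v_1 \leadsto v_2$, so path-collapsing of $f$ forces $f(v_0) = f(v_1) = f(v_2)$. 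The same argument applied to the enlarged digraphs (noting that edges of $G$ are paths of length $1$) shows that $f$ also induces a triangle-collapsing digraph map $G \cup \mathrm{SP}(G)_t \to H \cup \mathrm{SP}(H)_t$. Applying $\Omega \circ \dFl$ to both yields chain maps in the appropriate degrees, and functoriality of $\Omega \circ \dFl$ ensures these are compatible with the connecting boundary map, assembling into a chain map $\zb{C}_F(G)(t) \to \zb{C}_F(H)(t)$.

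Finally, I would verify naturality in $t$ and functoriality under composition. Naturality in $t$ reduces to the observation that the square of digraph inclusions and induced digraph maps commutes at the digraph level, so functoriality of $\Omega \circ \dFl$ gives commutativity on chain complexes. Functoriality under composition follows because contractions compose to contractions and path-collapsing maps compose to path-collapsing maps, with the induced maps at each $t$ agreeing with the composition by functoriality of $\dFl$ on $\ascat{TcDgr}$. I expect the principal bookkeeping obstacle to be keeping track of the two-layer structure of $\zb{C}_F(G)(t)$ (the $\mathrm{SP}(G)_t$-part in degrees $\geq 2$ and the $G \cup \mathrm{SP}(G)_t$-part in degrees $\leq 1$) while verifying that all induced maps commute with the splicing morphism; but since every component arises by applying a single functor to a commuting square of digraph maps, this compatibility is automatic.
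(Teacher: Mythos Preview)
Your proposal is correct and matches the paper's approach: the paper does not give a formal proof of this lemma, but immediately before stating it records the two key observations (a contraction induces weak digraph maps $\mathrm{SP}(G)_t \to \mathrm{SP}(H)_t$, and a path-collapsing contraction induces triangle-collapsing maps on the same), then defers the remaining bookkeeping to~\cite[Lemma~3.9]{Chaplin2024}. You spell out exactly these ingredients in more detail. One harmless slip: edges of $G$ are paths of length $w(e)$, not length~$1$, but this does not affect the argument since all you need is that an edge of $G\cup\mathrm{SP}(G)_t$ witnesses a $G$-path between its endpoints.
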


\begin{defin}
\mdf{Grounded persistent directed flag homology} is the functor
$\mdf{\zbpershomdflagmap}:\ascat{ContPcWDgr} \to \Funct{\Rposet}{\ascat{Vec}}$ given by $\zbpershomdflagmap \defeq H_1 \circ \zb{C}_F$.
\end{defin}

In the rest of this section we study the stability of $\zbpershomdflagmap$.
Namely, we wish to understand what alterations can be made to a weighted digraph $G$ without dramatically changing $\zbpershomdflag{G}$, as measured by the interleaving distance.

\subsection{Stability theorems}

We aim to state a general stability theorem for $\zbpershomdflagmap$, along the lines of~\cite[Theorem~5.8]{Chaplin2024}.
First, we require relative version of the homotopies in $\Sys[\dFl]$ such that the induced chain homotopy is trivial on the fixed component.
We keep the presentation fairly close to~\cite[\S~5]{Chaplin2024} in order to emphasise the differences.

\begin{defin}
Given two triangle-collapsing digraph maps $f, g: G \to H$ and a subset $A\subseteq V(G)$,
if $f \simeq_{\dFl, 1}g$
and moreover
$f(v) = g(v)$ for every $v\in A$,
then
we say that $f$ and $g$ are \mdf{one-step $\Sys[\dFl]$-homotopic relative $A$}.
This determines a system of one-step homotopies, which we denote \mdf{$\Sys[\dFl, A]$}.
We denote the resulting equivalence relation by \mdf{$f\simeq_{\dFl} g\ (\mathrm{rel}\ A)$} and say that $f$ and $g$ are \mdf{$\Sys[\dFl]$-homotopic relative $A$}.
\end{defin}

By essentially the same proof as~\cite[Lemma~2.37]{Chaplin2024}, we obtain the following behaviour of the induced chain homotopy between maps that are $\Sys[\dFl]$-homotopic relative $A$.

\begin{lemma}
Suppose $f, g: G \to H$ are triangle-collapsing digraph maps that are $\Sys[\dFl]$-homotopic relative $A$, for some $A\subseteq V(G)$.
Then, $\dFl(f)$ and $\dFl(G)$ are $\Sys[\ascat{WkRPC}]$-homotopic
so Theorem~\ref{thm:homotopy_yields_chain_homotopy} induces a chain homotopy
between $\Omega(\dFl(f))$ and $\Omega(\dFl(g))$;
denote its components by
\begin{equation}
L_k: \Omega_k(\dFl(G)) \to \Omega_{k+1}(\dFl(H)).
\end{equation}
Let $G_A$ denote the induced subgraph of $G$ on the vertices in $A$.
Then $L_k(\Omega_k(\dFl(G_A))) = 0$.
\end{lemma}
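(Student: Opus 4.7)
The plan is to trace through the construction of $L_k$ from the proof of Theorem~\ref{thm:homotopy_yields_chain_homotopy} and observe that the assumption $f(v)=g(v)$ on $A$ forces all relevant lifted paths to become irregular under the one-step homotopies, so they are killed by the chain map $\mathcal{R}_\bullet$.

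First, I would unwind what it means for $f$ and $g$ to be $\Sys[\dFl]$-homotopic relative $A$: there is a sequence $f = f_0, f_1, \dots, f_m = g$ of triangle-collapsing digraph maps, one-step $\Sys[\dFl]$-homotopies $F_i$ realising $f_i \simeq_{\dFl,1} f_{i+1}$, and $f_i(v) = f_{i+1}(v)$ for every $v\in A$. By Corollary~\ref{cor:dfl_homot_characterise}, each $F_i$ can be viewed as a triangle-collapsing simplicial morphism $\overline{\Cyl(\dFl(G))} \to \dFl(H)$ satisfying $F_i \circ \iota_0 = f_i$ and $F_i \circ \iota_1 = f_{i+1}$ (or swapped, with sign $\alpha_i = -1$). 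The key observation is that for $v\in A$, we have $F_i(v,0) = f_i(v) = f_{i+1}(v) = F_i(v,1)$.

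Next, recall that the chain homotopy constructed in the proof of Theorem~\ref{thm:homotopy_yields_chain_homotopy} is $L_k = \sum_{i=1}^m \alpha_i L^{(i)}_k$, where $L^{(i)}_k(c) = \inducedch{F_i}(\mathfrak{L}(c))$. By Lemma~\ref{lem:gens_of_osc}, $\Omega_k(\dFl(G_A)) = C_k(\dFl(G_A))$ has a basis of simplices $v_0 \dots v_k$ with every $v_l \in A$, so by linearity it suffices to prove that $L^{(i)}_k(v_0 \dots v_k) = 0$ for each such simplex and each $i$.

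For such a basis element,
\begin{equation}
\mathfrak{L}(v_0 \dots v_k) = \sum_{j=0}^k (-1)^j (v_0, 0) \dots (v_j, 0)(v_j, 1) \dots (v_k, 1).
\end{equation}
Applying $F_i$ to the $j$-th summand yields the elementary path
\begin{equation}
F_i(v_0,0) \dots F_i(v_j,0) F_i(v_j,1) \dots F_i(v_k,1).
\end{equation}
Since every $v_l \in A$, the identity $F_i(v_l,0)=F_i(v_l,1)$ applies at the central junction, giving two adjacent vertices equal to $f_i(v_j)$. The path is therefore irregular, so $\mathcal{R}_{k+1}(F_i)$ sends it to $0$, and thus $\inducedch{F_i}(\mathfrak{L}(v_0\dots v_k)) = 0$. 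Summing over $i$ with the signs $\alpha_i$ yields $L_k(v_0 \dots v_k) = 0$, as required.

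There is no real obstacle here: the only subtlety is bookkeeping the direction of each one-step homotopy $F_i$ (handled by the $\alpha_i$), but since the vanishing argument applies to every $F_i$ individually, the signs are irrelevant for the final conclusion.
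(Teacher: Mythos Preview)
Your argument is correct and is exactly the approach the paper intends: the paper itself defers to \cite[Lemma~2.37]{Chaplin2024} with the phrase ``by essentially the same proof'', and what you have written is precisely that proof adapted to the $\dFl$ setting --- trace the explicit chain homotopy $L_k=\sum_i\alpha_i\,\inducedch{F_i}\circ\mathfrak{L}$ from Theorem~\ref{thm:homotopy_yields_chain_homotopy}, and observe that on a basis simplex $v_0\dots v_k\in\dFl(G_A)$ each lifted summand has the adjacent pair $(v_j,0)(v_j,1)$ mapped by $F_i$ to $f_i(v_j)f_{i+1}(v_j)$, which coincide because $v_j\in A$.
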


We now introduce $\delta$-shifting vertex maps which will be our main method for constructing interleavings.

\begin{defin}[{\cite[Definition~5.1]{Chaplin2024}}]
Given two weighted digraphs $G, H$, a \mdf{$\delta$-shifting vertex map} is a vertex map $f: V(G) \to V(H)$ such that $f$ induces weak digraph maps
$\mathrm{SP}(G)_t \to \mathrm{SP}(H)_{t+\delta}$
and
$G\cup\mathrm{SP}(G)_t \to H\cup\mathrm{SP}(H)_{t+\delta}$,
for every $t \in \R$.
\end{defin}

\begin{defin}[{c.f.~\cite[Definition~5.5]{Chaplin2024}}]
Let $G, H$ be two weighted digraphs and let $f: V(G) \to V(H)$ and $g: V(H) \to V(G)$ be two vertex maps between them,
\begin{enumerate}
\item Construct the following sets:
\begin{align}
\mdf{V_{\mathrm{fix}} (g, f)} &\defeq \left\{ v \in V(G) \rmv (g \circ f)(v) = v \right\}, \\
\mdf{E_{\mathrm{fix}} (g, f)} &\defeq \left\{ e \in E(G) \rmv (g \circ f)(e) = e \right\}, \\
\mdf{V_{\mathrm{diff}}(g, f)}  &\defeq \left\{ v \in V(G) \rmv (g \circ f)(v) \neq v \right\}, \\
\mdf{E_{\mathrm{diff}}(g, f)}  &\defeq \left\{ e \in E(G) \rmv (g \circ f)(e) \neq e \right\}. 
\end{align}
Moreover, denote $\mdf{G_{\mathrm{diff}}(g, f)} \defeq (V(G), E_{\mathrm{diff}}(g, f))$.
\item If both
 $\id: G_{\mathrm{diff}}(g, f) \to \mathrm{SP}(G)_{2\delta}$
and 
 $g \circ f: G_{\mathrm{diff}}(g, f) \to \mathrm{SP}(G)_{2\delta}$
are triangle-collapsing digraph maps and furthermore they are $\Sys[\dFl]$-homotopic relative $V_{\mathrm{fix}}(g, f)$
then we
say that $(g, f)$ has \mdf{grounded $\dFl$-codistortion $\leq \delta$}.
\item If $f$ and $g$ are both path-collapsing, $\delta$-shifting vertex maps and the pairs $(g, f)$ and $(f, g)$ both have grounded $\dFl$-codistortion $\leq \delta$ then we say they form a \mdf{$\delta$-grounded $\dFl$-interleaving.}
\end{enumerate}
\end{defin}

With these definitions and a proof essentially identical to that of~\cite[Theorem~5.8]{Chaplin2024}, we obtain our main stability theorem for $\zbpershomdflagmap$.

\begin{theorem}
Given two weighted digraphs $G, H$, if there is a $\delta$-grounded $\dFl$-interleaving between them then
\begin{equation}
d_I(\zbpershomdflag{G}, \zbpershomdflag{H})\leq \delta.
\end{equation}
\end{theorem}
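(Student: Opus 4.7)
The plan is to mirror the strategy used in the proof of~\cite[Theorem~5.8]{Chaplin2024}, substituting $\dFl$ for $\mathcal{A}$ throughout and invoking the homotopy theory developed in Section~\ref{sec:dflag_homotopy} in place of the path-homotopy theory of Section~\ref{sec:homotopy_theory_allowed_paths}. Given a $\delta$-grounded $\dFl$-interleaving $(f, g)$, the goal is to build a $\delta$-interleaving between $\zb{C}_F(G)$ and $\zb{C}_F(H)$ in the homotopy category of persistent chain complexes, then descend to $H_1$ and apply Lemma~\ref{lem:functors_are_interleaving_lipschitz}.

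First, I would verify that $f$ and $g$ lift to chain maps
\begin{equation}
\zb{\inducedch{f}}_t:\zb{C}_F(G)(t) \to \zb{C}_F(H)(t+\delta), \qquad \zb{\inducedch{g}}_t:\zb{C}_F(H)(t) \to \zb{C}_F(G)(t+\delta)
\end{equation}
that are natural in $t$. Because $f$ is $\delta$-shifting, it induces weak digraph maps $\mathrm{SP}(G)_t \to \mathrm{SP}(H)_{t+\delta}$ and $G\cup\mathrm{SP}(G)_t \to H\cup\mathrm{SP}(H)_{t+\delta}$; because $f$ is path-collapsing, these are in fact triangle-collapsing. Applying $\Omega\circ\dFl$ and using the pullback (top-row) description of $\zb{C}_F(\cdot)(t)$ yields the required chain maps; the same holds for $g$.

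Second, I would show that $\zb{\inducedch{g}} \circ \zb{\inducedch{f}} \simeq \transmorph{\zb{C}_F(G)}{2\delta}$ and symmetrically for the other composition. By hypothesis, $\id$ and $g \circ f$, viewed as triangle-collapsing digraph maps $G_{\mathrm{diff}}(g, f) \to \mathrm{SP}(G)_{2\delta}$, are $\Sys[\dFl]$-homotopic relative $V_{\mathrm{fix}}(g, f)$. The preceding lemma produces a chain homotopy $\{L_k\}$ between $\Omega(\dFl(\id))$ and $\Omega(\dFl(g\circ f))$ that vanishes on the subcomplex $\Omega(\dFl(G_{\mathrm{fix}}))$, where $G_{\mathrm{fix}}$ denotes the induced subgraph on $V_{\mathrm{fix}}(g, f)$.

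Third, I would assemble $\{L_k\}$ into a chain homotopy on the full grounded complex $\zb{C}_F(G)$. In degrees $\geq 2$ the grounded complex coincides with $\Omega_\bullet(\dFl(\mathrm{SP}(G)_t))$ and $L_k$ applies verbatim. In degrees $0$ and $1$, where the complex mixes in generators from $G$ itself, the vanishing of $L_k$ on $\Omega(\dFl(G_{\mathrm{fix}}))$ is precisely what makes the homotopy extend consistently across the pushout/cone structure defining $\zb{C}_F$: on the fixed portion the two maps already agree with $\transmorph{}{2\delta}$, and on the differing portion $L_k$ itself supplies the homotopy. Applying $H_1$ then yields a $\delta$-interleaving of persistence modules.

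The main obstacle is Step~3: one must check carefully that the chain homotopy $\{L_k\}$, which a priori only inhabits $\Omega(\dFl(\mathrm{SP}(G)_t))$, respects the extra rigidity imposed by mixing in $G$ in low degrees. The relative homotopy condition is engineered precisely to make this extension trivial on the $G$-portion, but verifying the boundary formula $\bd\tilde L + \tilde L\bd = \zb{\inducedch{g}}\zb{\inducedch{f}} - \transmorph{\zb{C}_F(G)}{2\delta}$ in degrees $0$ and $1$ requires bookkeeping at the boundary between $G$-generators and $\mathrm{SP}(G)$-generators, analogously to~\cite[\S~5]{Chaplin2024}.
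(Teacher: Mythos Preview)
Your proposal is correct and matches the paper's approach exactly: the paper does not give a detailed argument but simply states that the proof is ``essentially identical to that of~\cite[Theorem~5.8]{Chaplin2024}'', and your plan is precisely to adapt that proof by replacing $\mathcal{A}$ with $\dFl$ and using the relative $\Sys[\dFl]$-homotopy lemma in place of its path-homology analogue. Your identification of Step~3 (extending the chain homotopy across the low-degree mixing with $G$) as the main bookkeeping point is also accurate.
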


In~\cite[\S~5]{Chaplin2024}, the analogous theorem was used to study the stability of the pipeline to various perturbations of weighted digraphs. With this new theory, the results of that work go through for $\zbpershomdflagmap$ as soon as one has made the necessary constraints to ensure that the $\delta$-shifting vertex maps are path-collapsing and that the one-step homotopies belong to $\Sys[\dFl, V_{\mathrm{fix}}(g, f)]$.

To this end, note that if $G$ is a digraph and $H$ is a DAG then any weak digraph map $G \to H$ is necessarily path-collapsing.
Therefore, denoting the category of weighted DAGs with contractions for morphisms by $\ascat{ContWDag}$, we have an inclusion of categories $\ascat{ContWDag} \subseteq\ascat{ContPcWDgr}$.
As such, most of the stability theorems obtained in~\cite[\S~5]{Chaplin2024} apply automatically once we restrict to weighted DAGs.
To summarise these results, we present Table~\ref{tbl:dflag_stability_result_summary}, which is a reproduction of~\cite[Table~1]{Chaplin2024} but with additional annotations to denote when the result applies unrestricted to $\zbpershomdflagmap$, and when the result must first be restricted to DAGs.

\begin{table}[hbtp]
\begin{center}
\ars{1.2}
\tcs{0.7\tabcolsep}
\renewcommand\theadfont{\bfseries}
\newcommand*\partialthm{${}^\blacklozenge$}
\newcommand*\unrestrictedthm{${}^\checkmark$}
\newcommand*\restrictedthm{${}^\restriction$}
\begin{tabular}{ l | c c c c }
  \thead{Operation} & \thead{Locally\\Stable} & \thead{Non-locally\\Stable} & \thead{Locally\\Unstable}  & \thead{Isomorphism} \\ \hline
  Weight perturbation & Theorem~5.11\unrestrictedthm & & & \\
  Edge subdivision & Theorem~5.16\restrictedthm & & & \\
  Edge collapse & Theorem~5.28\partialthm\restrictedthm &
                & Theorem~5.35\unrestrictedthm & \\
  Edge deletion &
  Corollary~5.41\partialthm\unrestrictedthm &
  Theorem~5.38\unrestrictedthm &
  Theorem~5.42\unrestrictedthm &
Theorem~5.45\partialthm\restrictedthm
\\
  Vertex deletion & & Theorem~5.50\restrictedthm & Corollary~5.49\unrestrictedthm & Corollary~5.48\partialthm\unrestrictedthm
\end{tabular}
\caption{
  Stability and instability theorems for $\zbpershomdflagmap$, under various digraph operations.
  Theorem and Corollary numbering references results in the publication~\cite{Chaplin2024}.
  {$\blacklozenge$} Denotes a theorem which only applies to a subset of such operations.
  {$\checkmark$} Denotes a theorem which applies to the directed flag complex unrestricted.
  {$\restriction$} Denotes a theorem which applies to the directed flag complex after restricting to weighted digraphs $G$, such that $G$ and  $\wdop{T}{\theta}{G}$ are both DAGs.
}\label{tbl:dflag_stability_result_summary}
\end{center}
\end{table}
 \printbibliography 

\end{document}